\newcommand{\N}{\mathbb{N}}
\newcommand{\Z}{\mathbb{Z}}
\def\Power #1 { \powerset(#1) }
\def\Bidom #1 { {\mathfrak P} (#1) }
\newtheorem{definition}{{\bf Definition}}[section]
\newtheorem{theorem}[definition]{{\bf Theorem}}
\newtheorem{maintheorem}{{\bf Main Theorem}}
\newtheorem{corollary}[definition]{{\bf Corollary}}
\newtheorem{proposition}[definition]{\noindent {\bf Proposition}}
\newtheorem{lemma}[definition]{\noindent {\bf Lemma}}
\newtheorem{claim}[definition]{\noindent {\bf Claim}}
\newtheorem{question}[definition]{\noindent {\bf Question}}
\newtheorem{example}[definition]{\noindent {\bf Example}}
\newtheorem{remark}[definition]{\noindent {\bf Remark}}\newtheorem{problem}[definition]{\noindent {\bf Problem}}
\newtheorem{fact}[definition]{\noindent {\bf Fact}}
\def\proofref #1 {{\noindent  {\bf Proof} (#1).}\ }
\def\downarrownogap{\downarrow \!\!}
\def\uparrownogap{\uparrow \!\!}
\def\endproof{\hfill {\kern 6pt\penalty 500
\raise -0pt\hbox{\vrule \vbox to5pt {\hrule width 5pt
\vfill\hrule}\vrule}}}
\def\centerpicture #1 by #2 (#3){\leavevmode
        \vbox to #2{
        \hrule width #1 height 0pt depth 0pt
        \vfill
        \special{pictfile #3}}}
\title[Convex sublattices and fixed point property]{Convex sublattices of a  lattice \\ and a fixed point property }
\author[D.Duffus]{Dwight Duffus} 
\address{Mathematics \& Computer Science Department, Emory University, Atlanta, Georgia, USA 30322} 
\email{dwight@mathcs.emory.edu}
\author[C.Laflamme]{Claude Laflamme*} 
\address{ Mathematics \& Statistics Department, University of Calgary, Calgary, Alberta, Canada T2N 1N4}
\email{laflamme@ucalgary.ca} 
\thanks{*Supported by NSERC of Canada Grant \# 690404} 
\author[M.Pouzet]{Maurice Pouzet**} \address{ICJ, Math\'ematiques, Universit\'e Claude-Bernard Lyon1, 43 bd. 11 Novembre 1918, 69622 Villeurbanne Cedex, France and Mathematics \& Statistics Department, University of Calgary, Calgary, Alberta, Canada T2N 1N4}
 \email{pouzet@univ-lyon1.fr }
\author[R.Woodrow]{Robert Woodrow} \address{Mathematics \& Statistics Department, University of Calgary, Calgary, Alberta, Canada T2N 1N4}
\email{woodrow@ucalgary.ca }  
\thanks{**Research completed while the author visited the Mathematics and Statistics Department of the University of 
Calgary in July 2008; the support provided is gratefully acknowledged.}
\date{\today }
\begin{document}

\keywords{posets, lattices, convex sublattice, retracts, fixed point property }
\subjclass[2000]{Partially ordered sets and lattices (06A, 06B)}

\begin{abstract}  
The collection $\mathcal{C}_{L}(T)$ of nonempty convex sublattices of
a lattice $T$ ordered by bi-domination is a lattice. We say that $T$ has
the \emph{fixed point property for convex sublattices} (CLFPP for
short) if every order preserving map $f:T\rightarrow
\mathcal{C}_{L}(T)$ has a fixed point, that is $x\in f(x)$ for some
$x\in T$. We examine which lattices may have CLFPP.  We introduce the
\emph{selection property for convex sublattices} (CLSP); we observe
that a complete lattice with CLSP must have CLFPP, and that this
property implies that $\mathcal{C}_{L}(T)$ is complete.  We show that
for a lattice $T$, the fact that $\mathcal{C}_{ L}(T)$ is complete is
equivalent to the fact that $T$ is complete and the lattice $\powerset
(\omega)$ of all subsets of a countable set, ordered by containment,
is not order embeddable into $T$. We show that for the lattice
$T:=\mathcal {I}(P)$ of initial segments of a poset $P$, the
implications above are equivalences and that these properties are
equivalent to the fact that $P$ has no infinite antichain. A crucial
part of this proof is a straightforward application of a wonderful
Hausdorff type result due to Abraham, Bonnet, Cummings, D\v{z}amondja
and Thompson 2010 \cite{abraham-all}.
\end{abstract} 
\maketitle


\section{Introduction} 

Let $E$ be a set. A \emph{multivalued map} defined on $E$ is a map $f$
from $E$ into $\Power E $, the power set of $E$, and a \emph{fixed
point} of $f$ is an element $x \in E$ such that $x \in f(x)$. The
consideration of the existence of fixed points for various kind of
multivalued maps originates in analysis, with Kakutani's proof of von
Neumann's minmax theorem.  Investigation of fixed points of
multi-valued maps occurs also in the study of partially ordered sets
(\cite{smithson}, see \cite{R}, \cite{walker}), and that is our goal
in this paper.
 
Given a poset $P:= (E, \leq)$, we shall reserve the notation $\Power E
$ for the power set of $E$ equipped with the usual subset
inclusion. We shall use instead $\Bidom E := \Power E \setminus
\{\emptyset\}$ to denote the nonempty subsets  of $E$ endowed with the
following \emph{bi-dominating} preorder: for $A, B \subseteq E$, $A \leq B$ if
every $x \in A$ is below some $y\in B$ and every $y \in B$ is above
some $x \in A$. Note that we removed the empty set for convenience
only, as that subset would be  incomparable to any other subset in the
bi-dominating ordering.  A map $f: E \rightarrow \mathfrak P(E)$ such
that $x\leq y $ implies $f(x)\leq f(y)$ is said to be \emph{order
preserving} (despite the fact that the bi-dominating preorder is not an
order in general).

The poset $P$ has the \emph{relational fixed point} (RFPP for short)
if every order preserving map $f: P\rightarrow \Bidom P$ has a fixed
point. This property and the bi-dominating preorder were introduced by
Walker in \cite{walker} (who used the phrase ``isotone maps" for what we call
``order preserving maps" and gave no name to the bi-dominating preorder), in
connection with the fixed point property for the product of posets.  He
proved that a finite poset $P$ has RFPP if and only if it is
dismantlable. In this paper, we are primarily interested by a special
case of this notion: $P$ is a lattice $T$ and multivalued maps take
values in the set $\mathcal {C}_{L}(T)$ of nonempty convex
sublattices of $T$.  We say that $T$ has the
\emph{fixed point property for convex sublattices} (CLFPP for short)
if every order preserving map from $T$ into $\mathcal {C}_{L}(T)$ has
a fixed point. We concentrate on the following general question:

\begin{question}
Which lattices have  CLFPP?
\end{question}

This property extends the fixed point property for order preserving
maps of a lattice into itself. As is well known, \emph{a lattice $T$
has the fixed property if and only if $T$ is complete \cite {tarski},
\cite{davis}}. Since CLFPP extends FPP, a lattice satisfying  
CLFPP must be complete. More is true: as we will see, $\mathcal{C}_{
L}(T)$ must be complete too (see Proposition
\ref{routineproposition}). It is a tempting conjecture that the
converse holds; we have not yet succeeded in proving or disproving
this.  However, a main result of this paper characterizes lattices $T$
such that $\mathcal{C}_{ L}(T)$ is complete.  This characterization is
based on the nonembeddability of $\powerset (\omega )$, the set of
all subsets of $\omega$, the set of nonnegative integers, ordered by
containment.

\begin{maintheorem} \label{thm:main} 
Let $T$ be a lattice. Then the following properties are equivalent:
\begin{enumerate} [(i)]
\item $\mathcal{C}_{ L}(T)$ is a complete lattice.
\item Every lattice quotient of every retract of $T$ is complete.
\item $T$ is a complete lattice and $\powerset (\omega)$ is not order embeddable in $T$. 
\end{enumerate}
\end{maintheorem}

The implication $(iii)\Rightarrow (ii)$ is, under a seemingly weaker form, in \cite{P-R}.

Complete lattices in which $\powerset (\omega)$ is not order
embeddable arise naturally.  A quite familiar example allows us to
sharpen the preceding characterization for a particular family of
complete lattices. (For more sophisticated examples, see \cite
{L-M-P1, L-M-P2}.)  First, we introduce another property that is
closely related to CLFPP and completeness of $C_L (T)$.

We say that a lattice $T$ has the \emph{selection property for convex
sublattices} (CLSP) if there is an order preserving map $\varphi$ from
$\mathcal{C}_{ L}(T)$ into $T$ such that $\varphi(X)\in X$ for all
$X\in \mathcal{C}_{L}(T)$. For complete lattices, CLSP implies 
CLFPP (Proposition \ref{routineproposition}).

Let us recall that a \emph{closure} on a set $E$ is a function
$\varphi$ defined on the subsets of $E$ such that $X\subseteq
\varphi(X)\subseteq \varphi(Y)= \varphi(\varphi(Y))$ whenever
$X\subseteq Y\subseteq E$. A subset $X$ of $E$ is \emph{closed} if
$\varphi(X)=X$; it is \emph{independent} if $x\not \in \varphi
(X\setminus \{x\})$ for every $x\in X$. The set $\mathcal C_{\varphi}$
of closed sets is a complete lattice.  Here is a well-known fact (see
\cite{chakir-pouzet} Theorem 1.2 or \cite {L-M-P1}).

\begin{fact}\label{fact:closure} 
The lattice $\powerset (\omega)$ is not order embeddable in the
complete lattice $\mathcal C_{\varphi}$ if and only if $E$ contains no
infinite independent set.
\end{fact}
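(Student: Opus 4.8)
The plan is to prove the logically equivalent \emph{positive} form: $\powerset(\omega)$ is order embeddable in $\mathcal C_{\varphi}$ if and only if $E$ contains an infinite independent set. I would treat the two implications separately, and both turn out to be short once the right witnesses are isolated.

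For the direction starting from an infinite independent set $I$, first pass to a countably infinite independent subset $I_0\subseteq I$ (a subset of an independent set is independent, since $\varphi$ is monotone). The map $e\colon J\mapsto \varphi(J)$ sends $\powerset(I_0)$ into $\mathcal C_{\varphi}$ and is clearly order preserving. The crux is the identity $\varphi(J)\cap I_0 = J$ for every $J\subseteq I_0$: the inclusion $J\subseteq \varphi(J)\cap I_0$ is immediate, and if some $x\in I_0\cap \varphi(J)$ were not in $J$ then $J\subseteq I_0\setminus\{x\}$ would give $x\in \varphi(J)\subseteq \varphi(I_0\setminus\{x\})$, contradicting independence. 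This identity shows $e$ is order reflecting, hence an order embedding; composing with a bijection $\omega\to I_0$ yields the desired embedding of $\powerset(\omega)$.

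For the converse, suppose $e\colon \powerset(\omega)\to \mathcal C_{\varphi}$ is an order embedding and write $C_S := e(S)$. The key observation is that $\{n\}\not\subseteq \omega\setminus\{n\}$, so order reflection forces $C_{\{n\}}\not\subseteq C_{\omega\setminus\{n\}}$; hence I may choose a point $y_n\in C_{\{n\}}\setminus C_{\omega\setminus\{n\}}$. I claim $\{y_n : n\in\omega\}$ is an infinite independent set. Indeed, for $m\neq n$ we have $\{m\}\subseteq \omega\setminus\{n\}$, so $y_m\in C_{\{m\}}\subseteq C_{\omega\setminus\{n\}}$; thus $\{y_m : m\neq n\}\subseteq C_{\omega\setminus\{n\}}$ and, since $C_{\omega\setminus\{n\}}$ is closed, $\varphi(\{y_m : m\neq n\})\subseteq C_{\omega\setminus\{n\}}$. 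As $y_n\notin C_{\omega\setminus\{n\}}$ by construction, we get $y_n\notin \varphi(\{y_m : m\neq n\})$, which is exactly independence (the same inclusion also shows the $y_n$ are pairwise distinct).

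I do not anticipate a genuine obstacle here: the only point requiring care is the selection of witnesses in the second implication. A naive choice such as $y_n\in C_{\{n\}}\setminus C_{\emptyset}$ fails, because an order embedding need not respect meets or joins, so one cannot control $\varphi$ of a union of atoms directly. Using the complementary sets $\omega\setminus\{n\}$ sidesteps this entirely, since the containments $\{m\}\subseteq\omega\setminus\{n\}$ for $m\neq n$ give all the control needed without ever computing a join in $\mathcal C_{\varphi}$.
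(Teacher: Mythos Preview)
Your proof is correct. Both directions are handled cleanly: in the first, the identity $\varphi(J)\cap I_0 = J$ is exactly what is needed to show $J\mapsto\varphi(J)$ is order reflecting; in the second, your choice of witnesses $y_n\in C_{\{n\}}\setminus C_{\omega\setminus\{n\}}$ (rather than, say, $C_{\{n\}}\setminus C_\emptyset$) is the right one, and your remark about why the naive choice fails is apt.

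As for comparison with the paper: there is nothing to compare. The paper does not prove this fact at all --- it is stated as ``a well-known fact'' with references to Chakir--Pouzet (Theorem~1.2) and Lawson--Mislove--Priestley. Your argument supplies a complete, self-contained proof where the paper gives only citations.
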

 
If $P$ is an ordered set, the map which associates $\downarrownogap 
X$, the initial segment generated by $X$, to each $X \subseteq P$ is a
closure and the lattice of closed sets is the set $\mathcal {I}(P)$ of
initial segments of $P$, ordered by containment.  A subset $S$ is
independent with respect to this closure if and only if $S$ is an
antichain in $P$.  Hence from Fact \ref{fact:closure}, we have that
\emph{$\powerset (\omega)$ is not order embeddable in $\mathcal {I}(P)$ 
if and only if $P$ contains no infinite antichain}.

\medskip

In general we have the following.

\begin{maintheorem}\label{thm:main2}
Let $P$ be a poset and $T:=\mathcal{I}(P)$ be the lattice of initial
segments of $P$. Then the following properties are equivalent:
\begin{enumerate}[{(i)}]
 \item $T$ has  CLSP;
 \item $T$ has  CLFPP;
 \item $\mathcal{C}_{ L}(T)$ is a complete lattice;
\item $P$ has no infinite antichain.
\end{enumerate}
\end{maintheorem}

The implications $(i)\Rightarrow (ii)$ and $(ii)\Rightarrow (iii)$,
included in Proposition \ref{routineproposition}, are
easy. The implication $(iii)\Rightarrow (iv)$ mixes the easy part of
Theorem \ref{thm:main}, namely implication $(i)\Rightarrow (iii)$,
with Fact \ref{fact:closure}. The implication $(iv)\Rightarrow (i)$, the
content of Lemma \ref {lem:key}, is the cornerstone of the theorem.
It follows from a Hausdorff type result due to Abraham, Bonnet,
Cummings, D\v{z}amondja and Thompson 2010 \cite{abraham-all}.

From previous results in \cite{D-R-S} and \cite{pou-riv84}, we know
that CLFPP holds for countable complete lattices and finite
dimensional complete lattices.  In Section \ref{sublattices} we look
at the relationship between various properties which may hold for the
class of lattices with  CLSP, the class of lattices with CLFPP,
as well as the class of lattices such that $\mathcal{C}_{L}(T)$ is
complete. Results and problems are reviewed at the end of Section
\ref{conclusion}.

In Section \ref{subsets}, we consider also maps defined on a poset $P$
and whose values are convex subsets of $P$. We establish the exact
relationship between FPP for those maps (CFPP) and RFPP, which yields the following:

\begin{maintheorem}\label{RFPP}
A poset has RFPP if and only if it has CFPP and contains no chain of
the same type as the integers. In particular RFPP and CFPP coincide for finite posets.
\end{maintheorem}

We illustrate the differences with CLFPP, the main illustration
being Theorem \ref{thm:illus}.

In the following section, we present the required notation,
terminology and preliminary results.

\section{Preliminaries}

Our notation is standard apart from one instance.  Let $f$ be a map with domain a set $E$.
For $X \subseteq E$, let $f[X] = \{ f(x) : x \in X \}$, rather than the more common $f(X)$.
We denote by $\omega$
the order type of the set $\N$ of non negative integers and use $n\in
\N$ as well as $n \in \omega$ and $n<\omega$.  Similarly  we denote
by $\theta$ the order type of the chain $\Z$ of integers. We recall
below some basic notions of the theory of ordered sets, but we refer to
\cite {jech} for other undefined set theoretical notation.  

Let $P: =(E, \leq)$ be a poset. In the sequel we will denote by $P$
the set $E$; that is we write $A \subseteq P$ to mean $A \subseteq
E$. Let $A$ be a subset of $P$.  We say that $A$ is an {\it initial
segment} of $P$ if $x\in P$, $y\in A$ and $x\leq y$ imply $x\in A$. We
say that $A$ is {\it up-directed} if every pair of elements of $A$ has
a common upper bound in $A$. An {\it ideal} of $P$ is a nonempty
up-directed initial segment of $P$.  A {\it final segment}, resp. a
down directed subset, resp.  \emph{filter}, of $P$ is any initial
segment, resp. up-directed subset, resp. ideal, of $P^*$, the dual of
$P$.

We use this notation and terminology:

\begin{itemize}
\item $U(A) :=\{x\in P: y\leq x \; \text{ for all } \; y\in A\}$ is the
set of upper bounds of $A$ and $L(A)$, the set of lower bounds, is
defined dually; 
\item $\downarrownogap  A := \{x\in P: x\leq y\; \text{for some} \; y\in A\}$ 
is the \emph{initial segment generated by} $A$ and $\uparrownogap A$
is defined dually and called the \emph{final segment generated by} $A$.
\end{itemize}

For a singleton $x\in P$, we use $\downarrownogap  x$ instead of $\downarrownogap  \{x\}$. 
If reference to $P$ is needed, particularly in case of several orders on
the same ground set, we may use the notation $\downarrow_{P} \!\! A$.

A subset $A$ of $P$ is \emph{cofinal}, resp. \emph{coinitial}, in $P$
if $\downarrownogap A=P$, resp. $\uparrownogap A=P$. The \emph{cofinality} of
$P$, denoted by $cf(P)$ is the least cardinal $\kappa$ such that $P$
contains a cofinal subset of size $\kappa$.

We now review the basic notions associated with gaps. Let $(A,B)$ be a
pair of subsets of $P$.

\begin{itemize} 
   \item $(A,B)$ is a  \emph{pregap} of $P$ if $A\subseteq L(B)$
  or, equivalently, if $B\subseteq U(A)$;
  \item $(A, B)$ is \emph{totally ordered} if $A\cup B$ is totally ordered via the order on $P$;  
  \item If $(A, B)$ is a pregap, set $S(A,B):=U(A)\cap L(B)$;
  \item A pregap $(A,B)$ is called \emph{separable} if $S(A,B)\ne \emptyset$;
  \item A pregap $(A,B)$ is called a \emph{gap} if $S(A, B) = \emptyset$.
  \end{itemize}
  
Concerning the separation of pregaps, we recall that a poset is a
complete lattice iff every pregap is separated.  As it is easy to see
a lattice is complete iff every pregap where $A$ is up-directed and
$B$ down-directed is separated.  It is known that this later condition
is equivalent to the fact that every totally ordered pregap is
separated, see \cite{D-R}.
  
We denote by $ \mathcal{I}(P)$, resp. $\mathcal {I}d (P)$, the set of
initial segments, resp. ideals of $P$ ordered by inclusion. We denote
by $\mathcal {F}(P)$, resp. $\mathcal {F}i(P)$, the set of final
segments, resp. filters, of $P$ ordered by inclusion.

Let $A$ be subset of $P$.  We say that $A$ is \emph{convex} if $x,
y\in A$, $z\in P$ and $x\leq z\leq y$ imply $z\in A$. Such sets are
also called \emph{intervals}. If $a, b\in P$ we set $[a, b]:= \{x\in
P: a\leq x\leq b\}$. This set is a \emph{closed interval}. A nonempty
set is of this form if and only if it is convex with a least and
largest element. A subset $A$ of $P$ is convex if and only if $A:=
I\cap F$ for some pair $(I,F)\in \mathcal{I}(P)\times \mathcal
{F}(P)$.  In particular, the set $Conv_P(A):=\downarrownogap  A\cap 
\uparrownogap A$ (=$\{z\in P: x\leq z\leq y\; \text{for some}\; (x,y)\in A^2\}$) is
convex and called the \emph{convex envelope} of $A$.  The set
 of convex subsets of $P$ is a closure system, and
the convex envelope of $A$ is nothing else than the closure of
$A$. Ordered by inclusion, this set  is a complete lattice (in
fact an algebraic lattice) which is the object of several studies eg
\cite{birkhoff}, \cite{semenova}. 
We denote the set on nonempty convex subsets of $P$  by $\mathcal{C}(P)$.
In this paper we are concerned with $\mathcal{C}(P)$ under a different ordering, which is the subject
of the next section.

\section{The poset of convex subsets of a poset}\label{subsets}


Let $P$ be a poset. As noted in the introduction, we use $\Bidom P :=
\Power P \setminus \{\emptyset\}$ to denote the nonempty subsets  of (the domain of) $P$ endowed with the
following \emph{bi-dominating} preorder.   For  $ X, Y \in \Bidom  P$ define:
\begin{equation}\label{eq:main1}
X\leq Y \; \text{if}\; X\subseteq \downarrownogap  Y  \:\text{and} \:Y \subseteq \uparrownogap X. 
\end{equation}
We also set:
\begin{equation}\label{eq:main2}
X\equiv Y \; \text{if}\; X \leq Y \:\text{and} \:Y \leq X. 
\end{equation}

\begin{lemma} 
The relation $\leq$ on $\Bidom P$ is a preorder. It induces
an order on $\mathcal C(P)$ which is isomorphic to the quotient of
this preorder by the equivalence associated with the preorder.
\end{lemma}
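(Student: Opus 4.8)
The plan is to separate the three assertions and route everything through the convex envelope $Conv_P$. First I would verify directly that $\leq$ is a preorder on $\Bidom P$. Reflexivity is immediate: for $X\in\Bidom P$ one has $X\subseteq \downarrownogap X$ and $X\subseteq \uparrownogap X$, since $x\leq x$ for each $x\in X$. Transitivity is a two-step chaining argument: if $X\leq Y$ and $Y\leq Z$, then given $x\in X$ I pick $y\in Y$ with $x\leq y$ and then $z\in Z$ with $y\leq z$, which yields $X\subseteq \downarrownogap Z$; the dual chaining gives $Z\subseteq \uparrownogap X$, so $X\leq Z$.

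The heart of the lemma is the claim that every element of $\Bidom P$ is $\equiv$-equivalent to its convex envelope, and that distinct convex sets are never equivalent. For the first point I would establish $X\equiv Conv_P(X)$ by unwinding the definitions: since $X\subseteq Conv_P(X)=\downarrownogap X\cap\uparrownogap X$, the four inclusions required for $X\leq Conv_P(X)$ and for $Conv_P(X)\leq X$ all follow immediately (two from $X\subseteq Conv_P(X)$ and two from $Conv_P(X)\subseteq \downarrownogap X$ and $Conv_P(X)\subseteq \uparrownogap X$). This shows that every $\equiv$-class meets $\mathcal{C}(P)$.

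For uniqueness I would use the elementary fact that a nonempty set $A$ is convex if and only if $A=\downarrownogap A\cap\uparrownogap A=Conv_P(A)$. Suppose $A,B\in\mathcal{C}(P)$ with $A\equiv B$. From $A\leq B$ I extract $A\subseteq \downarrownogap B$, and from $B\leq A$ I extract $A\subseteq \uparrownogap B$; hence $A\subseteq \downarrownogap B\cap\uparrownogap B=B$. The symmetric argument gives $B\subseteq A$, so $A=B$. Consequently each $\equiv$-class contains exactly one convex set, and the map sending a class to its unique convex representative is a bijection between $\Bidom P/{\equiv}$ and $\mathcal{C}(P)$.

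Finally I would assemble the isomorphism. The uniqueness computation just given, applied to $A\leq B$ together with $B\leq A$, shows in particular that $\leq$ restricted to $\mathcal{C}(P)$ is antisymmetric, hence a genuine partial order (reflexivity and transitivity being inherited from the preorder). Since the quotient order is defined by $[X]\leq[Y]$ exactly when $X\leq Y$ for some, equivalently any, representatives, and since convex sets serve as representatives of their own classes, the bijection above both preserves and reflects the order and is therefore an order isomorphism. The only subtle step is the uniqueness argument; once one recognizes that a convex set coincides with its convex envelope, the rest is bookkeeping.
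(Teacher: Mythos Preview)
Your proof is correct and follows essentially the same route as the paper. The paper isolates the two key facts $Conv(X)\equiv X$ and $X\equiv Y\Leftrightarrow Conv(X)=Conv(Y)$; your argument establishes the first fact identically and proves the relevant instance of the second (for $A,B\in\mathcal{C}(P)$, $A\equiv B$ implies $A=B$) via the same computation $A\subseteq \downarrownogap B\cap\uparrownogap B=B$, just with more detail supplied.
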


\begin{proof}  
The first part of this lemma is obvious. The second part relies on the
following facts both of which are straightforward to prove:\\[-.2cm]

\qquad $Conv(X) \equiv X$, and\\[-.2cm]

\qquad $X\equiv Y \Leftrightarrow Conv (X)=Conv (Y)$\\[-.2cm]
 
for all $X, Y \in \Bidom P$.
\end{proof}

\vspace{.3cm}

Note that we could have defined this preorder on the collection of all
subsets, but then the empty set would have been incomparable to every
nonempty subset. Therefore it is avoided.

The poset $\mathcal C(P)$ has a simple representation:

\begin{fact} \label{fact3.1} 
The map $\phi$ from $\mathcal {C}(P)$ to the product $\mathcal
{I}(P)\times \mathcal {F}(P)^{*}$, defined by
$\phi(A):=(\downarrownogap A, \uparrownogap A)$ is an embedding.
\end{fact}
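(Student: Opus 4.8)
The plan is to exhibit $\phi$ as an order-embedding: show it is well-defined, order-preserving, order-reflecting, and injective. Since $\mathcal{C}(P)$ carries the order induced from the bi-dominating preorder (by the previous lemma), and since on convex sets $A \equiv B \Leftrightarrow A = B$, the injectivity of $\phi$ and the claim that $\phi$ reflects order will both flow from the same source. The key observation is that a nonempty convex set $A$ is completely recovered from the pair $(\downarrownogap A, \uparrownogap A)$ via the identity $A = \downarrownogap A \cap \uparrownogap A$, which holds precisely because $A$ is convex. I would first record this identity, since it drives everything.

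First I would verify that $\phi$ lands in the product: $\downarrownogap A$ is an initial segment and $\uparrownogap A$ is a final segment by definition, so $\phi(A) \in \mathcal{I}(P) \times \mathcal{F}(P)$ always. Next I would check order-preservation. Suppose $A \leq B$ in $\mathcal{C}(P)$, i.e.\ $A \subseteq \downarrownogap B$ and $B \subseteq \uparrownogap A$. I claim $\downarrownogap A \subseteq \downarrownogap B$ and $\uparrownogap B \subseteq \uparrownogap A$; the first follows because $A \subseteq \downarrownogap B$ gives $\downarrownogap A \subseteq \downarrownogap\downarrownogap B = \downarrownogap B$ (the downward-closure operator is idempotent and monotone), and the second is dual. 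Since the order on $\mathcal{F}(P)^*$ is reverse inclusion, $\uparrownogap B \subseteq \uparrownogap A$ is exactly $\uparrownogap A \leq_{\mathcal{F}(P)^*} \uparrownogap B$, so $\phi(A) \leq \phi(B)$ in the product.

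The main point is order-reflection, which also yields injectivity. **The hard part will be** running the converse cleanly. Suppose $\phi(A) \leq \phi(B)$, i.e.\ $\downarrownogap A \subseteq \downarrownogap B$ and $\uparrownogap B \subseteq \uparrownogap A$. From $\downarrownogap A \subseteq \downarrownogap B$ I get $A \subseteq \downarrownogap A \subseteq \downarrownogap B$, which is the first half of $A \leq B$; dually $B \subseteq \uparrownogap B \subseteq \uparrownogap A$ gives the second half, so $A \leq B$. Thus $\phi$ reflects the order. For injectivity, if $\phi(A) = \phi(B)$ then $\downarrownogap A = \downarrownogap B$ and $\uparrownogap A = \uparrownogap B$, whence by convexity $A = \downarrownogap A \cap \uparrownogap A = \downarrownogap B \cap \uparrownogap B = B$. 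Since $\phi$ preserves and reflects order and is injective, it is an order-embedding, completing the proof. I expect the only delicacy is bookkeeping the direction reversal in the $\mathcal{F}(P)^*$ coordinate, so I would state the product order explicitly before beginning the verification.
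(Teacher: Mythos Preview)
Your proof is correct and is exactly the routine verification one expects here; the paper itself states this as a \emph{Fact} without proof, so there is no alternative argument to compare against. Your observation that $A=\downarrownogap A\cap\uparrownogap A$ for convex $A$, together with the monotonicity and idempotence of $\downarrownogap$ and $\uparrownogap$, is precisely what makes the statement immediate, and your care with the order reversal in the $\mathcal{F}(P)^{*}$ coordinate is the only point where a slip could occur.
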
 

\begin{remark} 
Let $X,Y\in \mathcal C(P)$. If $Y\subseteq X$ then $X\leq Y$ if and
only if $X\subseteq \downarrownogap Y$, that is $Y$ cofinal in
$X$. Consequently, \emph{a convex subset of $P$ is above $P$
w.r.t. the bi-domination order if and only if this is a final segment
of $P$ which is cofinal in $P$}. Subsets of a poset with these
properties are called \emph{open dense sets} and are well known in the
literature in particular regarding Baire spaces. 
\end{remark}



\begin{definition}
Let $P$ be a poset. We say that $P$ has the \emph{convex fixed point
property} (CFPP for short) if every order preserving map
$f:P\rightarrow \mathcal C(P)$ has a fixed point.
\end{definition}


The relationship between RFPP and CFPP is quite simple and was given
in Main Theorem \ref{RFPP}.

type $\theta$. In particular RFPP and CFPP coincide for finite posets.

The proof is related to ideas due to Walker. The
first involves the notion of retraction, where a poset $Q$ is called a
\emph{retract} of another poset $P$ if there are order preserving maps
$s:Q\rightarrow P$ and $r: P\rightarrow Q$ such that $r\circ s= 1_Q$.
The maps $r$ and $s$ are called a \emph{retraction} and a
\emph{coretraction}.

\begin{lemma} \label{lem:retractRFPP}
RFPP is preserved under retracts. 
\end{lemma}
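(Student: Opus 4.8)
The plan is to lift any multivalued map on $Q$ to one on $P$, apply RFPP upstairs, and push the resulting fixed point back down using the retraction identity $r\circ s=1_Q$. Suppose $P$ has RFPP and let $s:Q\rightarrow P$, $r:P\rightarrow Q$ witness that $Q$ is a retract. Given an order preserving map $g:Q\rightarrow \Bidom Q$, I would define $f:P\rightarrow \Bidom P$ by $f(p):=s[g(r(p))]$. Since $g(r(p))$ is a nonempty subset of $Q$ and $s$ is a function, $s[g(r(p))]$ is a nonempty subset of $P$, so $f$ indeed takes values in $\Bidom P$.

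The one point requiring verification is that $f$ is order preserving for the bi-dominating preorder. This reduces to a small sublemma: \emph{if $s:Q\rightarrow P$ is order preserving, then $X\leq Y$ in $\Bidom Q$ implies $s[X]\leq s[Y]$ in $\Bidom P$.} To see this, unwind the definition \eqref{eq:main1}: given $s(x)\in s[X]$ with $x\in X$, the inclusion $X\subseteq \downarrownogap Y$ provides $y\in Y$ with $x\leq y$, whence $s(x)\leq s(y)\in s[Y]$, so $s[X]\subseteq \downarrownogap s[Y]$; the inclusion $s[Y]\subseteq \uparrownogap s[X]$ follows symmetrically from $Y\subseteq \uparrownogap X$. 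With this sublemma in hand, $f$ is a composite of three monotone operations: $r$ is order preserving, $g$ is order preserving, and $X\mapsto s[X]$ is monotone for $\leq$ by the sublemma. Hence $p_1\leq p_2$ gives $r(p_1)\leq r(p_2)$, then $g(r(p_1))\leq g(r(p_2))$, and finally $f(p_1)\leq f(p_2)$.

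Now I would invoke RFPP for $P$ to obtain $p\in P$ with $p\in f(p)=s[g(r(p))]$. By definition of the elementwise image this means $p=s(q)$ for some $q\in g(r(p))$. Applying $r$ and using $r\circ s=1_Q$ yields $q=r(s(q))=r(p)$, whence $q\in g(r(p))=g(q)$. Thus $q$ is a fixed point of $g$, and $Q$ has RFPP.

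I expect no serious obstacle here: the construction $f:=s\circ g\circ r$ (read with the elementwise image of $s$) is essentially forced by the retract structure, and the only genuine content is the sublemma that the elementwise image under an order preserving map respects bi-domination, which is immediate from the definition. The essential use of the hypothesis $r\circ s=1_Q$ occurs precisely in the final step, where it converts a fixed point of $f$ upstairs into a fixed point of $g$ downstairs; without it the pullback $q=r(p)$ need not satisfy $p=s(q)$ and the argument would break.
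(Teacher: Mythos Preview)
Your proof is correct and follows essentially the same route as the paper: both lift $g$ to $h(p):=s[g(r(p))]$, invoke RFPP on $P$, and use $r\circ s=1_Q$ to push the fixed point back down to $Q$. The only difference is that you spell out the monotonicity of $X\mapsto s[X]$ for the bi-dominating preorder, which the paper simply asserts.
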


 
\begin{proof} 
Let $P$ be a poset satisfying RFPP, $Q$ an order retract of $P$, and
$g: Q \rightarrow \Bidom Q$ an order preserving map. Let $s: Q
\rightarrow P$ and $r:P\rightarrow Q$ be two order preserving maps
such that $r\circ s =1_{Q}$.  The map $h: P\rightarrow \Bidom P$
defined by $h(x):= s[g(r(x)]$ is order preserving, hence it has a
fixed point, say $u$.  Since $u\in h(u)$ there is some $y\in g(r(u))$
such that $u=s(y)$. We have $r(u)=r(s(y))=y$, thus $y\in g(y)$ and
hence $y$ is a fixed point of $g$.
\end{proof}
 
\medskip

The second idea involves the notion of well foundedness.  

\begin{definition}
We recall that a poset $P$ is \emph{well founded} if every nonempty
subset contains some minimal element; equivalently, $P$ contains no
infinite chain of type $\omega^*$. 
\end{definition}

The next lemma is essentially Proposition 5.2 of \cite {walker}.

\begin{lemma}\label{lem:wellfounded} 
Le $P$ be a poset and $f:P\rightarrow \Bidom P$  be a
preorder preserving map. If there exists some $x\in P$ such that
$\downarrownogap x$ is well founded and meets $f(x)$ then $f$ has a
fixed point. 
\end{lemma}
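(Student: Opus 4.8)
The plan is to build, by recursion on $n\in\omega$, a weakly decreasing sequence $x_0\geq x_1\geq x_2\geq\cdots$ lying entirely in $\downarrownogap x$ and satisfying $x_{n+1}\in f(x_n)$ for every $n$. Well-foundedness of $\downarrownogap x$ will then force this sequence to stabilize, and the point at which it stabilizes will turn out to be a fixed point of $f$. The whole argument rests on reading the hypothesis $x_{n+1}\leq x_n$ through the bi-dominating preorder so as to keep descending without ever leaving the image of the current term.

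For the construction I would set $x_0:=x$. By hypothesis $\downarrownogap x_0$ meets $f(x_0)$, so there is some $x_1\in f(x_0)$ with $x_1\leq x_0$. Assume now that $x_{n+1}\leq x_n$ with $x_{n+1}\in f(x_n)$ has been produced. Since $f$ is preorder preserving and $x_{n+1}\leq x_n$, we have $f(x_{n+1})\leq f(x_n)$ in $\Bidom P$; unfolding the definition of the bi-dominating preorder, this yields in particular $f(x_n)\subseteq\uparrownogap f(x_{n+1})$, i.e. every member of $f(x_n)$ lies above some member of $f(x_{n+1})$. Applying this to the element $x_{n+1}\in f(x_n)$ gives some $x_{n+2}\in f(x_{n+1})$ with $x_{n+2}\leq x_{n+1}$, completing the recursion step. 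The one thing to be careful about---and what I would flag as the only real subtlety---is to invoke the correct half of $f(x_{n+1})\leq f(x_n)$: it is the inclusion $f(x_n)\subseteq\uparrownogap f(x_{n+1})$, rather than $f(x_{n+1})\subseteq\downarrownogap f(x_n)$, that permits the descent while remaining inside the image of the preceding term.

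Finally I would extract the fixed point. Every $x_n$ satisfies $x_n\leq x$, so $\{x_n:n\in\omega\}$ is a nonempty subset of the well-founded poset $\downarrownogap x$ and hence has a minimal element $x_m$. For $n\geq m$ the sequence is decreasing, so $x_n\leq x_m$, and minimality of $x_m$ forces $x_n=x_m$; in particular $x_m=x_{m+1}$. As $x_{m+1}\in f(x_m)$ by construction, we obtain $x_m\in f(x_m)$, so $x_m$ is the desired fixed point. Beyond the bookkeeping with the bi-dominating preorder noted above, no genuine obstacle arises: the remaining ingredient is merely the standard fact that a weakly decreasing sequence in a well-founded poset is eventually constant, which is immediate from the minimal-element formulation of well-foundedness.
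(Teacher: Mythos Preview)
Your proof is correct and takes essentially the same approach as the paper: construct a descending sequence $x_0\geq x_1\geq\cdots$ in $\downarrownogap x$ with $x_{n+1}\in f(x_n)$, then use well-foundedness of $\downarrownogap x$ to force the sequence to stabilize at a fixed point. The paper's version is terser, simply asserting that one may choose $x_m\in f(x_{m-1})\cap\downarrownogap x_{m-1}$ without spelling out the bi-domination argument you carefully provide for the recursion step.
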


\begin{proof}
Define by induction a sequence $(x_n)_{n<\omega}$ of elements of
$P$. Let $m<\omega$ and suppose $(x_n)_{n<m}$ be defined. If $m=0$, set
$x_m:= x$. Otherwise, choose $x_m\in f(x_{m-1})\cap \downarrownogap
x_{m-1}$. This sequence is well defined. It is descending, thus
stationary, hence it yields a fixed point.

\end{proof}

\medskip

\noindent
{\bf Proof of Theorem \ref{RFPP}.}  Let $P$ be a poset. Suppose that
$P$ has RFPP.  Trivially, it has CFPP. Suppose that it contains a
chain $Z$ of type $\theta$. Extend this chain to a maximal chain
$C$. As a maximal chain of $P$, this is a retract of $P$ by \cite
{D-R-S}. Since RFPP is preserved under retraction (by Lemma
\ref{lem:retractRFPP}), $C$ has RFPP. In particular, $C$ has FPP,
hence $C$ is complete. Hence $Z$ has an infimum $a$ and a supremum
$b$. As it is easy to see, the chain $D:=\{a\}\cup Z\cup \{b\}$ is a
retract of $C$, thus it has RFPP. But this is trivially false. Indeed,
let $f: D\rightarrow
\Bidom D$ defined by $f(x):=Z\setminus\{x\}$ if $x\in Z$ and
$f(x):= Z$ if $x\in \{a,b\}$. This map is order preserving but has no
fixed point. Thus $P$ cannot contain a chain of type
$\theta$. 

\medskip
Conversely, suppose that $P$ has CFPP and contains no chain of type
$\theta$. Let $f: P\rightarrow \Bidom P$ be an order preserving
map. Let $\overline f: P\rightarrow \mathcal C (P)$ defined by setting
$\overline f(x):= Conv_P(f(x))$. This map is order preserving
too. Thus it has a fixed point $x$. Since $x\in \overline
f(x):=Conv_P(f(x))$ there are $u, v\in f(x)$ such that $u\leq x \leq
v$. Since $P$ contains no chain of type $\theta$, either
$\downarrownogap x$ is well founded or $\uparrownogap x$ is dually
well founded. W.l.o.g we may suppose that $\downarrownogap x$ is well
founded (otherwise consider $P^*$). Apply Lemma
\ref{lem:wellfounded} and we obtain that $f$ has a fixed point.
\endproof

\subsection{FPP for the poset of convex subsets of a poset}

Since the set $\mathcal C (P)$ of nonempty convex subsets of a
poset $P$ is a poset, a straightforward question emerges:

\begin{question}\label{question1b}
How can we relate  FPP for   $\mathcal C (P)$ and  CFPP for $P$?

\end{question}

This simple minded question is at the root of this paper. As we will
see, there is no relation in general. There are posets $P$ without
CFPP for which $\mathcal C (P)$ has FPP; a straightforward example is
the ordinal sum of two $2$-element chains (see Example
\ref{examplenondismantlable}). And, on an other hand, there are posets
$P$ with CFPP but such that $\mathcal C (P)$ does not have FPP (see
Lemma \ref{lem:nofixedpoint}). According to Theorem \ref{thm:not FPP}
below these posets are infinite.

The example we give in Lemma \ref {lem:nofixedpoint} relies on the
well-known fact that a poset containing a totally ordered gap does not
have FPP. With that in hand, we construct a complete lattice
$\overline Q$ with CFPP such that $\mathcal C (\overline Q)$
contains a totally ordered gap.

We start our discussion with some simple facts about pregaps, namely a
necessary condition for separation and the fact that each gap yields a
gap having a special form.

Let $P$ be a poset.  If $\mathcal A \subseteq \mathcal C (P)$, we
set $I_{\mathcal A}:= \bigcap\{\downarrownogap A: A\in \mathcal A\}$
and $F_{\mathcal A}:= \bigcap\{\uparrownogap A: A\in \mathcal A\}$. If
$(\mathcal A, \mathcal B)$ is a pregap of $\mathcal C (P)$ we set
$\mathcal A_{\mathcal B}:= \{ I_{\mathcal B}\cap \uparrownogap A: A\in
\mathcal A\}$ and $\mathcal B_{\mathcal A}:= \{F_{\mathcal A} \cap
\downarrownogap B : B\in \mathcal B\}$.

\begin{lemma}\label{lem:pregaps}
Let $(\mathcal A,  \mathcal B)$ be a pregap of $\mathcal C (P)$. 
\begin{enumerate} [{(i)}] 

\item \label{item i} 
If $(\mathcal A, \mathcal B)$ is separable then $F_{\mathcal A}\cap
I_{\mathcal B}$ separates it; furthermore it contains every separator.
In particular $F_{\mathcal A}\cap I_{\mathcal B}$ is nonempty.
 
\item \label{item ii}
$(\mathcal A_{\mathcal B}, \mathcal B_{\mathcal A})$ is a pregap of
$\mathcal C (P)$ such that $F_{\mathcal A_{\mathcal
B}}=F_{\mathcal A}$, $I_{\mathcal B_{\mathcal A}}= I_{\mathcal B}$ and
$S(\mathcal A_{\mathcal B}, \mathcal B_{\mathcal A})\subseteq
S(\mathcal A,\mathcal B)$. In particular $(\mathcal A_{\mathcal B},
\mathcal B_{\mathcal A})$ is a gap whenever $(\mathcal A, \mathcal B)$
is a gap.
 
\end{enumerate}
\end{lemma}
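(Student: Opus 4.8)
The plan is to reduce everything to the defining containments of the bi-domination order and to isolate two elementary ``engine'' facts that follow at once from the pregap hypothesis. Writing out $X\le Y$ as ``$X\subseteq \downarrownogap Y$ and $Y\subseteq \uparrownogap X$'', I would first record a characterization of separators: a convex set $C$ lies in $S(\mathcal A,\mathcal B)$ exactly when the four conditions hold, namely $C\subseteq \uparrownogap A$ for all $A$ (i.e.\ $C\subseteq F_{\mathcal A}$), $C\subseteq \downarrownogap B$ for all $B$ (i.e.\ $C\subseteq I_{\mathcal B}$), $A\subseteq \downarrownogap C$ for all $A$, and $B\subseteq \uparrownogap C$ for all $B$. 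Both parts of the lemma rest on this. The engine facts are: since $A\le B$ for every $A\in\mathcal A$, $B\in\mathcal B$, each $a\in A$ lies in $\downarrownogap B$ for every $B$, so $A\subseteq I_{\mathcal B}$; dually $B\subseteq F_{\mathcal A}$. Combined with the trivial $A\subseteq \uparrownogap A$ and $B\subseteq \downarrownogap B$, these give the crucial containments $A\subseteq I_{\mathcal B}\cap \uparrownogap A$ and $B\subseteq F_{\mathcal A}\cap \downarrownogap B$; that is, each original set is contained in its modified version.

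For (i) I would set $S:=F_{\mathcal A}\cap I_{\mathcal B}$, which is convex as the intersection of a final and an initial segment. From the separator characterization, every separator $C$ satisfies $C\subseteq F_{\mathcal A}$ and $C\subseteq I_{\mathcal B}$, hence $C\subseteq S$; this establishes ``$S$ contains every separator'', and since a given separator is nonempty it also yields $S\neq\emptyset$, so $S\in\mathcal C(P)$. To see that $S$ itself separates, the conditions $S\subseteq F_{\mathcal A}$ and $S\subseteq I_{\mathcal B}$ are immediate, while the remaining two I would obtain by pushing the separator $C$ through the monotonicity of $\downarrownogap(\cdot)$ and $\uparrownogap(\cdot)$: from $C\subseteq S$ we get $A\subseteq \downarrownogap C\subseteq \downarrownogap S$ and $B\subseteq \uparrownogap C\subseteq \uparrownogap S$.

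For (ii) I would first note that the elements $A':=I_{\mathcal B}\cap \uparrownogap A$ and $B':=F_{\mathcal A}\cap \downarrownogap B$ lie in $\mathcal C(P)$: they are convex, and nonempty by the engine containments $A\subseteq A'$, $B\subseteq B'$. The identities $F_{\mathcal A_{\mathcal B}}=F_{\mathcal A}$ and $I_{\mathcal B_{\mathcal A}}=I_{\mathcal B}$ reduce to the single computation $\uparrownogap(I_{\mathcal B}\cap \uparrownogap A)=\uparrownogap A$ and its dual: one inclusion comes from $I_{\mathcal B}\cap \uparrownogap A\subseteq \uparrownogap A$, the other from $A\subseteq I_{\mathcal B}\cap \uparrownogap A$, and intersecting over $A$ (resp.\ $B$) gives the equalities. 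For the pregap property I would verify $A'\le B'$ directly: given $x\in A'\subseteq I_{\mathcal B}\subseteq \downarrownogap B$, pick $b\in B$ with $x\le b$ and note $b\in B'$ since $B\subseteq B'$, so $A'\subseteq \downarrownogap B'$; dually $B'\subseteq \uparrownogap A'$. Finally $S(\mathcal A_{\mathcal B},\mathcal B_{\mathcal A})\subseteq S(\mathcal A,\mathcal B)$ follows by applying the separator characterization to the modified pregap and then using $F_{\mathcal A_{\mathcal B}}=F_{\mathcal A}$, $I_{\mathcal B_{\mathcal A}}=I_{\mathcal B}$ together with $A\subseteq A'$, $B\subseteq B'$ to recover the four separator conditions for $(\mathcal A,\mathcal B)$; the gap conclusion is then immediate, since $S(\mathcal A,\mathcal B)=\emptyset$ forces $S(\mathcal A_{\mathcal B},\mathcal B_{\mathcal A})=\emptyset$.

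The computations are all routine; the only real obstacle is bookkeeping — keeping the four containment conditions and their directions straight, and recognizing that the whole argument is powered by the two facts $A\subseteq I_{\mathcal B}$ and $B\subseteq F_{\mathcal A}$, equivalently $A\subseteq A'$ and $B\subseteq B'$. Once these are stated at the outset, every assertion becomes a two-line verification, so I would invoke them throughout rather than re-deriving them.
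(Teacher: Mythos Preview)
Your proof is correct and follows essentially the same route as the paper's. The paper organizes the argument around two explicit claims---$\uparrownogap(I_{\mathcal B}\cap\uparrownogap A)=\uparrownogap A$ and the chain $A\le I_{\mathcal B}\cap\uparrownogap A\le F_{\mathcal A}\cap\downarrownogap B\le B$---while you front-load the same content as your ``engine facts'' $A\subseteq I_{\mathcal B}$, $B\subseteq F_{\mathcal A}$ and the separator characterization; the verifications are identical in substance.
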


\begin{proof} 
Item (i). Set $Z:=F_{\mathcal A}\cap I_{\mathcal B}$. Let $C\in
S(\mathcal A, \mathcal B)$. For every $A\in \mathcal A$, $B\in
\mathcal B$ we have $C\subseteq \uparrownogap A$ and $C\subseteq
\downarrownogap B$ hence $C\subseteq Z$. Let $A\in \mathcal A$. We
have trivially $Z\subseteq \uparrownogap A$. We have $A\subseteq
\downarrownogap C$ and $C\subseteq Z$ hence $A\subseteq
\downarrownogap Z$ thus $A\leq Z$. By the same token, we have $Z\leq
B$ if $B\in \mathcal B$ hence $Z \in S(\mathcal A, \mathcal B)$.

Item (ii). We first prove two basic claims. Let $A\in \mathcal A$ and $B\in \mathcal B$. 

\begin{claim} \label{claim:gap1} 
$\uparrownogap (I_{\mathcal B}\cap \uparrownogap A)=\uparrownogap A$
and $\downarrownogap (F_{\mathcal A}\cap \downarrownogap
B)=\downarrownogap B$.  
\end{claim}

\begin{proof}
We prove only the first equality. We have $A \subseteq I_{\mathcal B}$
since $(\mathcal A, \mathcal B)$ is a pregap.  This yields $
A\subseteq \uparrownogap (I_{\mathcal B}\cap \uparrownogap A)$. Thus
$\uparrownogap A\subseteq \uparrownogap (I_{\mathcal B}\cap
\uparrownogap A)$. The reverse inclusion holds since $I_{\mathcal
B}\cap \uparrownogap A$ is a subset of $\uparrownogap A$.
\end{proof}

\begin{claim} \label{claim:gap2} 
$A\leq I_{\mathcal B}\cap \uparrownogap A\leq F_{\mathcal A}\cap
\downarrownogap B \leq B$.
\end{claim}

\begin{proof}
We have trivially $ I_{\mathcal B}\cap \uparrownogap A\subseteq
\uparrownogap A$. Since, as seen in Claim \ref{claim:gap1} above,
$A\subseteq I_{\mathcal B}$ we have $A\subseteq \downarrownogap (
I_{\mathcal B}\cap \uparrownogap A)$. The inequality $A\leq
I_{\mathcal B}\cap \uparrownogap A$ follows. Similarly, we have
$F_{\mathcal A}\cap \downarrownogap B \leq B$. Finally, from $\mathcal
I_{\mathcal B}\subseteq \downarrownogap B$ and Claim \ref{claim:gap2}
above, we have $I_{\mathcal B}\cap \uparrownogap A\subseteq
\downarrownogap B=\downarrownogap ( F_{\mathcal A}\cap \downarrownogap
B)$. Similarly, we have $(F_{\mathcal A}\cap \downarrownogap
B)\subseteq \uparrownogap (I_{\mathcal B}\cap \uparrownogap A)$,
proving $I_{\mathcal B}\cap \uparrownogap A\leq F_{\mathcal A}\cap
\downarrownogap B$.
\end{proof}

Now back to the proof of (ii), the equality $F_{\mathcal A_{\mathcal
B}}=F_{\mathcal A}$ follows from the first part of Claim \ref
{claim:gap1}; the equality $I_{\mathcal B_{\mathcal A}}= I_{\mathcal
B}$ follows from the second part. From Claim \ref{claim:gap2}, we have
$S(\mathcal A_{\mathcal B}, \mathcal B_{\mathcal A})\subseteq
S(\mathcal A,\mathcal B)$.
\end{proof}

\medskip

These particular gaps will play an important role and thus deserve a
dedicated nomenclature. 

\begin{definition} 
A pregap $(\mathcal A', \mathcal B' )$ of the form $(\mathcal
A_{\mathcal B}, \mathcal B_{\mathcal A})$ will be called
\emph{special}.
\end{definition} 

The following result on special pregaps relies on the simple fact that
a nonempty final segment of an up-directed poset is cofinal in that
poset.

\begin{lemma}\label{lem:pregapsbis}
Let $(\mathcal A', \mathcal B')$ be a special pregap of
$\mathcal C (P)$ and let $(\mathcal A, \mathcal B)$ such that
$(\mathcal A', \mathcal B')= (\mathcal A_{\mathcal B}, \mathcal
B_{\mathcal A})$. If $I_{\mathcal B}$ and $F_{\mathcal A}$ are
respectively up and down directed, then

\begin{enumerate} [{(i)}]
\item The order on $\mathcal A'$ coincides with the reverse of the inclusion and the order on $\mathcal B'$ coincides with the inclusion. 
\item   The map $A\hookrightarrow I_{\mathcal B} \cap \uparrownogap A$ from $\mathcal A$ onto $\mathcal A'$ and the map  $B\hookrightarrow F_{\mathcal A}\cap \downarrownogap B$ from $\mathcal B$ onto $\mathcal B'$ are two order preserving maps. 
\item $\downarrownogap A'=I_{\mathcal B'}$ and $\uparrownogap  B'=F_{\mathcal A'}$ for all $A'\in \mathcal A'$, $B'\in \mathcal B'$;
\item The following properties are equivalent:

\begin{enumerate} 
\item $(\mathcal A', \mathcal B')$ is a gap; 
\item $F_{\mathcal A'}\cap I_{\mathcal B'}=F_{\mathcal A}\cap I_{\mathcal
 B}=\emptyset$; \item $(\mathcal A, \mathcal B)$ is a gap.
\end{enumerate}

\end{enumerate}
\end{lemma}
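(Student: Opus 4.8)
The plan is to establish the four assertions of Lemma \ref{lem:pregapsbis} in order, leaning heavily on the machinery already developed in Lemma \ref{lem:pregaps} and its two supporting claims, together with the cited observation that a nonempty final segment of an up-directed poset is cofinal in it. Recall that a special pregap means $\mathcal A'=\{I_{\mathcal B}\cap\uparrownogap A:A\in\mathcal A\}$ and $\mathcal B'=\{F_{\mathcal A}\cap\downarrownogap B:B\in\mathcal B\}$, and we are assuming $I_{\mathcal B}$ is up-directed and $F_{\mathcal A}$ is down-directed.

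First I would prove (i). For two members $A'_1=I_{\mathcal B}\cap\uparrownogap A_1$ and $A'_2=I_{\mathcal B}\cap\uparrownogap A_2$ of $\mathcal A'$, the key point is that each $A'_j$ is a nonempty final segment of the up-directed poset $I_{\mathcal B}$ (it is the trace of a final segment of $P$ on $I_{\mathcal B}$). Hence by the cited fact each $A'_j$ is cofinal in $I_{\mathcal B}$, so $\downarrownogap A'_j\cap I_{\mathcal B}=I_{\mathcal B}$, and in fact $\downarrownogap A'_j=I_{\mathcal B}$ since $A'_j\subseteq I_{\mathcal B}$ forces $\downarrownogap A'_j\subseteq I_{\mathcal B}$ while cofinality gives the reverse. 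This simultaneously proves the first half of (iii) for the $\mathcal A'$ side. Now the bi-domination relation $A'_1\leq A'_2$ unwinds: the condition $A'_1\subseteq\downarrownogap A'_2=I_{\mathcal B}$ is automatic, so $A'_1\leq A'_2$ reduces to $A'_2\subseteq\uparrownogap A'_1$, which for final segments of $P$ is exactly reverse inclusion $A'_2\subseteq A'_1$. The claim for $\mathcal B'$ is the dual argument using $F_{\mathcal A}$ down-directed, giving $\uparrownogap B'=F_{\mathcal A'}$ and that the order on $\mathcal B'$ is ordinary inclusion; this completes (iii) as well.

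Next, assertion (ii): the map $A\mapsto I_{\mathcal B}\cap\uparrownogap A$ is order preserving and surjective onto $\mathcal A'$ by construction, and the dual statement holds for $\mathcal B$. Order preservation follows from Claim \ref{claim:gap2}, which already records $A\leq I_{\mathcal B}\cap\uparrownogap A$ and the monotone behaviour of these traces; combined with part (i), if $A_1\leq A_2$ in $\mathcal A$ then passing to the $\uparrownogap$ operation and intersecting with $I_{\mathcal B}$ preserves the relation. Surjectivity is definitional. Finally for (iv) I would show the cycle of implications among (a), (b), (c). The equivalence of (b) and (c) is immediate from Lemma \ref{lem:pregaps}(i): a separable pregap has $F_{\mathcal A}\cap I_{\mathcal B}$ as a separator, so $(\mathcal A,\mathcal B)$ is a gap iff $F_{\mathcal A}\cap I_{\mathcal B}=\emptyset$; and by Lemma \ref{lem:pregaps}(ii) we have $F_{\mathcal A'}=F_{\mathcal A}$ and $I_{\mathcal B'}=I_{\mathcal B}$, so the intersection $F_{\mathcal A'}\cap I_{\mathcal B'}$ equals $F_{\mathcal A}\cap I_{\mathcal B}$, giving (b). Applying the same separator criterion to the pregap $(\mathcal A',\mathcal B')$ yields that it is a gap iff $F_{\mathcal A'}\cap I_{\mathcal B'}=\emptyset$, which is (a).

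The main obstacle I anticipate is the careful handling of the cofinality argument in (i) and (iii): one must verify that each $A'=I_{\mathcal B}\cap\uparrownogap A$ really is a \emph{nonempty} final segment of $I_{\mathcal B}$, where nonemptiness comes from $A\subseteq I_{\mathcal B}$ (established in Claim \ref{claim:gap1}) so that $A\subseteq A'$, and then correctly translating ``cofinal final segment of an up-directed poset'' into the equality $\downarrownogap A'=I_{\mathcal B}$ inside $P$ rather than merely inside $I_{\mathcal B}$. Once that translation is pinned down, the reduction of bi-domination to plain inclusion in (i) is forced, and the remaining parts follow by bookkeeping and by quoting Lemma \ref{lem:pregaps}. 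The dual arguments for the $\mathcal B'$, $F_{\mathcal A}$ side are symmetric and require no new idea.
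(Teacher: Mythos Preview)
Your treatment of (i)--(iii) is essentially the paper's argument: the key point is what the paper records as Claim~\ref{claim:gap3}, namely $\downarrownogap(I_{\mathcal B}\cap\uparrownogap A)=I_{\mathcal B}$, which is exactly your cofinality observation. One phrasing slip: the sets $A'_j=I_{\mathcal B}\cap\uparrownogap A_j$ are \emph{not} final segments of $P$, only of $I_{\mathcal B}$. What makes the step ``$A'_2\subseteq\uparrownogap A'_1\Rightarrow A'_2\subseteq A'_1$'' work is that $A'_2\subseteq I_{\mathcal B}$ and $\uparrownogap A'_1=\uparrownogap A_1$ (Claim~\ref{claim:gap1}), so $A'_2\subseteq I_{\mathcal B}\cap\uparrownogap A_1=A'_1$.

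There is, however, a genuine gap in your argument for (iv). You write that Lemma~\ref{lem:pregaps}(i) gives ``$(\mathcal A,\mathcal B)$ is a gap iff $F_{\mathcal A}\cap I_{\mathcal B}=\emptyset$'', and you invoke the same biconditional for $(\mathcal A',\mathcal B')$. But Lemma~\ref{lem:pregaps}(i) only gives one direction: separable $\Rightarrow F_{\mathcal A}\cap I_{\mathcal B}\neq\emptyset$. The converse can fail in $\mathcal C(P)$; the paper remarks explicitly (just after Lemma~\ref{lem:fix-pointfree}) that totally ordered gaps with $F_{\mathcal A}\cap I_{\mathcal B}\neq\emptyset$ exist. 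So you cannot get $(c)\Rightarrow(b)$ directly, nor $(b)\Rightarrow(a)$, from Lemma~\ref{lem:pregaps}(i) alone. The paper instead proves the cycle $(a)\Rightarrow(b)\Rightarrow(c)\Rightarrow(a)$, where $(b)\Rightarrow(c)$ and $(c)\Rightarrow(a)$ do come from Lemma~\ref{lem:pregaps}, but $(a)\Rightarrow(b)$ is the substantive step: assuming $Z:=F_{\mathcal A}\cap I_{\mathcal B}\neq\emptyset$, one must \emph{show} that $Z$ separates $(\mathcal A',\mathcal B')$. This is precisely where the directedness hypothesis is used a second time --- $Z$ is a nonempty final segment of the up-directed $I_{\mathcal B}$, hence cofinal, so $\downarrownogap Z=I_{\mathcal B}\supseteq A'$, and dually $\uparrownogap Z=F_{\mathcal A}\supseteq B'$; combined with $Z\subseteq\uparrownogap A'$ and $Z\subseteq\downarrownogap B'$ this gives $A'\leq Z\leq B'$. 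You have this cofinality tool in hand from your proof of (i)/(iii); you just need to apply it here rather than citing Lemma~\ref{lem:pregaps}(i) as a biconditional.
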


\begin{proof} We begin with a claim. 

\begin{claim} \label{claim:gap3} 
If $I_{\mathcal B}$ is up-directed then $\downarrownogap (I_{\mathcal
B}\cap \uparrownogap A)= I_{\mathcal B}$ for every $A \in \mathcal A$.
\end{claim}

\begin{proof}
Indeed, let $A\in \mathcal A$. Since $(\mathcal A, \mathcal B)$ is a
pregap we have $A \subseteq I_{\mathcal B}$. Hence, the final segment
$ I_{\mathcal B} \cap \uparrownogap A$ of $I_{\mathcal B}$ is
nonempty. Since $I_{\mathcal B}$ is up directed, $ I_{\mathcal B}
\cap \uparrownogap A$ is cofinal in $I_{\mathcal B}$ that is
$\downarrownogap (I_{\mathcal B} \cap \uparrownogap A)= I_{\mathcal
B}$, proving our claim.
\end{proof}

Item (i). Let $A, A' \in \mathcal A$. Suppose $I_{\mathcal B} \cap
\uparrownogap A\leq I_{\mathcal B} \cap \uparrownogap
A'$. Necessarily, we have $I_{\mathcal B} \cap \uparrownogap A'
\subseteq \uparrownogap (I_{\mathcal B} \cap \uparrownogap A)$. It
follows that $I_{\mathcal B} \cap \uparrownogap A' \subseteq
I_{\mathcal B} \cap \uparrownogap A$. Conversely, suppose $I_{\mathcal
B} \cap \uparrownogap A' \subseteq I_{\mathcal B} \cap \uparrownogap
A$. Then trivially, $I_{\mathcal B} \cap \uparrownogap A' \subseteq
\uparrownogap (I_{\mathcal B} \cap \uparrownogap A)$. With Claim
\ref{claim:gap3} we have $(I_{\mathcal B} \cap \uparrownogap
A)\subseteq I_{\mathcal B}=I_{\mathcal B} \cap \uparrownogap A'$,
hence $I_{\mathcal B} \cap \uparrownogap A\leq I_{\mathcal B} \cap
\uparrownogap A'$ as required.

Item (ii).  Now, let $A, A' \in \mathcal A$. According to Claim
\ref{claim:gap3}, $\downarrownogap (I_{\mathcal B}\cap \uparrownogap
A)=I_{\mathcal B}=\downarrow(I_{\mathcal B}\cap \uparrownogap A')$.
Suppose $A\leq A'$.  Then, in particular $\uparrownogap A'\subseteq
\uparrownogap A$ hence $I_{\mathcal B}\cap \uparrownogap A'\subseteq
\uparrownogap (I_{\mathcal B} \cap \uparrownogap A)$. The inequality
$I_{\mathcal B} \cap \uparrownogap A\leq I_{\mathcal B} \cap
\uparrownogap A'$ follows.  Hence, the map $A\hookrightarrow
I_{\mathcal B} \cap A$ is order preserving as claimed. Since
$F_{\mathcal A}$ is down directed, the same property holds for the map
$B\hookrightarrow F_{\mathcal A}\cap \downarrownogap B$.

Item (iii) According to Claim \ref{claim:gap3}, $\downarrownogap
A'=I_{\mathcal B}$; since $I_{\mathcal B}=I_{\mathcal B'}$ we have
$\downarrownogap A'=I_{\mathcal B'}$. Similarly, $\uparrownogap
B'=F_{\mathcal A'}$ for every $B'\in \mathcal B$ and thus (iii) holds.

%

Item (iv). We prove the implications $(a) \Rightarrow (b)\Rightarrow
(c) \Rightarrow (a)$. Suppose that $(b)$ does not hold. Since
$I_{\mathcal B'}=I_{\mathcal B}$ and $F_{\mathcal A'}=F_{\mathcal A}$,
this amounts to $F_{\mathcal A}\cap I_{\mathcal B}\not =\emptyset$. We
claim that $F_{\mathcal A}\cap I_{\mathcal B}$ separates $(\mathcal
A', \mathcal B')$, that is $I_{\mathcal B}\cap \uparrownogap A\leq
F_{\mathcal A}\cap I_{\mathcal B}\leq F_{\mathcal A}\cap
\downarrownogap B$ for all $A\in \mathcal A$ and $B\in\mathcal B$,
hence $(a)$ does not hold. Indeed, let $A\in \mathcal A$.  With Claim
\ref {claim:gap1}, we have $F_{\mathcal A}\cap I_{\mathcal B}\subseteq
\uparrownogap A=\uparrownogap (I_{\mathcal B}\cap \uparrownogap
A)$. Also, $F_{\mathcal A}\cap I_{\mathcal B}$ is a nonempty final
segment of $I_{\mathcal B}$. This later set being up-directed,
$\downarrownogap (F_{\mathcal A}\cap I_{\mathcal B})=I_{\mathcal
B}$. Since $\downarrownogap (I_{\mathcal B}\cap \uparrownogap A)=
I_{\mathcal B}$ from Claim \ref {claim:gap3} it follows that
$I_{\mathcal B}\cap \uparrownogap A\subseteq \downarrownogap
(F_{\mathcal A}\cap I_{\mathcal B})$. Thus $I_{\mathcal B}\cap
\uparrownogap A\leq F_{\mathcal A}\cap I_{\mathcal B}$. The proof that
$F_{\mathcal A}\cap I_{\mathcal B}\leq F_{\mathcal A}\cap
\downarrownogap B$ for $B\in\mathcal B$ is similar.  Implication
$(b)\Rightarrow (c)$ is the contraposition of Item (\ref {item i}) of
Lemma \ref{lem:pregaps}. Implication $(c)\Rightarrow (a)$ is contained
in Item (\ref {item ii}) of Lemma \ref{lem:pregaps}. \end{proof}

\begin{lemma} \label {lem:fix-pointfree} 
If $\mathcal C (P)$ contains a totally ordered pregap $(\mathcal
A,\mathcal B)$ such that $F_{\mathcal A}\cap I_{\mathcal B}=\emptyset$, 
then $P$ does not have CFPP.
\end{lemma}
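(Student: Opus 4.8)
The plan is to refute CFPP directly, by building an order preserving map $f\colon P\rightarrow \mathcal C(P)$ with no fixed point. Write $L:=P\setminus F_{\mathcal A}$ and $H:=P\setminus I_{\mathcal B}$. Since $F_{\mathcal A}$ is an intersection of final segments and $I_{\mathcal B}$ an intersection of initial segments, $L$ is an initial segment and $H$ a final segment; the hypothesis $F_{\mathcal A}\cap I_{\mathcal B}=\emptyset$ gives $P=L\cup H$ and moreover $F_{\mathcal A}=P\setminus L\subseteq H$, so $P$ splits as the disjoint union of the initial segment $L$ and the final segment $F_{\mathcal A}$. The two facts that drive the construction are: if $x\in L$ then there is $A\in\mathcal A$ with $x\notin\uparrownogap A$ (and in particular $x\notin A$, since $A\subseteq\uparrownogap A$), and dually, if $x\in F_{\mathcal A}\subseteq H$ then there is $B\in\mathcal B$ with $x\notin\downarrownogap B$, whence $x\notin B$. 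Thus elements of $L$ can be sent into $\mathcal A$ and elements of $F_{\mathcal A}$ into $\mathcal B$ while avoiding membership; since every member of $\mathcal A$ lies below every member of $\mathcal B$ (the pregap condition) and $L$ sits below $F_{\mathcal A}$, such an assignment also has a chance of being order preserving.

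The delicate point is to choose these values \emph{monotonically}. Here I would first replace $\mathcal A$ by a cofinal well-ordered subchain and $\mathcal B$ by a coinitial reverse-well-ordered subchain; this is legitimate because $\mathcal A\cup\mathcal B$ is totally ordered, and passing to cofinal/coinitial subchains leaves $F_{\mathcal A}$ and $I_{\mathcal B}$ unchanged (hence $L$, $H$ and the hypothesis $F_{\mathcal A}\cap I_{\mathcal B}=\emptyset$), while keeping $(\mathcal A,\mathcal B)$ a pregap. With $\mathcal A$ well ordered, for $x\in L$ set $A(x):=\min\{A\in\mathcal A: x\notin\uparrownogap A\}$, and with $\mathcal B$ reverse-well-ordered, for $x\in F_{\mathcal A}$ set $B(x):=\max\{B\in\mathcal B: x\notin\downarrownogap B\}$; both sets are nonempty by the previous paragraph, and the required extrema exist exactly because of the (reverse) well-ordering. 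Define $f(x):=A(x)$ for $x\in L$ and $f(x):=B(x)$ for $x\in F_{\mathcal A}$.

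It then remains to check the two properties. For freeness, $x\notin A(x)$ for $x\in L$ and $x\notin B(x)$ for $x\in F_{\mathcal A}$ by construction, so $x\notin f(x)$ for every $x$. For monotonicity, let $x\le y$. If $x,y\in L$, then $\{A\in\mathcal A: y\notin\uparrownogap A\}\subseteq\{A\in\mathcal A: x\notin\uparrownogap A\}$ (because $\uparrownogap A$ is a final segment), so passing to minima gives $A(x)\le A(y)$; the case $x,y\in F_{\mathcal A}$ is dual, now using that $\downarrownogap B$ is an initial segment and passing to maxima, giving $B(x)\le B(y)$; and if $x\in L$, $y\in F_{\mathcal A}$ (the only mixed possibility, since $L$ is an initial segment) then $f(x)=A(x)\in\mathcal A$ and $f(y)=B(y)\in\mathcal B$, so $f(x)\le f(y)$ directly from the pregap inequality $A\le B$. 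Hence $f$ is order preserving with no fixed point, and $P$ fails CFPP. The main obstacle is precisely the monotone selection of the values $A(x)$, $B(x)$: it is what forces the reduction to (reverse-)well-ordered subchains and is the only place where the hypothesis that the pregap is \emph{totally ordered} is genuinely used. (By Lemma \ref{lem:pregaps}(\ref{item i}), $F_{\mathcal A}\cap I_{\mathcal B}=\emptyset$ is equivalent to $(\mathcal A,\mathcal B)$ being a gap of $\mathcal C(P)$, so this is the expected ``totally ordered gap'' obstruction.)
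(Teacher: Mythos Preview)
Your proof is correct and follows essentially the same approach as the paper's: reduce $\mathcal A$ to a cofinal well-ordered subchain and $\mathcal B$ to a coinitial reverse-well-ordered subchain, then send each $x$ to the extremal witness $A$ (resp.\ $B$) with $x\notin\uparrownogap A$ (resp.\ $x\notin\downarrownogap B$); the paper partitions $P$ as $I_{\mathcal B}\cup(P\setminus I_{\mathcal B})$ rather than your $(P\setminus F_{\mathcal A})\cup F_{\mathcal A}$, but this is an immaterial dual choice. One small inaccuracy in your closing parenthetical: $F_{\mathcal A}\cap I_{\mathcal B}=\emptyset$ implies $(\mathcal A,\mathcal B)$ is a gap of $\mathcal C(P)$, but the converse can fail for general posets (the paper notes this in the remark immediately following the lemma), so it is not an equivalence.
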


\begin{proof}
W.l.o.g we may suppose that $\mathcal A:=\{A_{\alpha}:\alpha<\mu\}$
and $\mathcal B: =\{B_{\beta}:\beta<\lambda\}$ are a well ordered
chain and a dually well ordered chain in $\mathcal C (P)$
satisfying: $A_{\alpha}<A_{\gamma}$ if and only if $\alpha<\gamma<\mu$
and $B_{\delta}<B_{\beta}$ if and only if
$\beta<\delta<\lambda$. Define $f: P\rightarrow \mathcal C (P)$ as
follows. Let $x\in P$. If $x\not \in I_{\mathcal B}$, set $f(x):=
B_{\beta}$ where $\beta$ is minimum such that $x\not \in
\downarrownogap B_{\delta}$. If $x\in I_{\mathcal B}$ then $x \not \in
F_{\mathcal A}$ and we set $f(x):= A_{\alpha}$ where $\alpha$ is
minimum such that $x\not \in \uparrownogap A_{\alpha}$. It is easy to
see that this map is order preserving. By construction it has no fixed
point, hence $P$ does not have CFPP.  
\end{proof}

\begin{remark} 
The set $\mathcal C (P)$ of (nonempty) convex subsets of a poset $P$ may
contain a gap $(\mathcal A,\mathcal B)$ which is totally ordered and
such that $F_{\mathcal A}\cap I_{\mathcal B}$ is nonempty. In this
case, $\mathcal C (P)$ does not have FPP.  But, one cannot use this gap to
construct a fixed point free map from $P$ to $\mathcal C (P)$ as in
Lemma \ref{lem:fix-pointfree} above (See Example
\ref{examplenotFPP}). 
\end{remark}

\medskip

The following fact follows from Lemma \ref{lem:wellfounded}:

\begin{fact} \label{wellfounded}
A well founded poset with a largest element has  CFPP.
\end{fact}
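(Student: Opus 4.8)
The plan is to read off \textbf{Fact \ref{wellfounded}} as an immediate corollary of Lemma \ref{lem:wellfounded}, using the largest element as the witness. First I would record the bookkeeping that connects the two statements: the nonempty convex subsets $\mathcal C(P)$ sit inside $\Bidom P$, and the order on $\mathcal C(P)$ is by definition the restriction of the bi-dominating preorder. Consequently any order preserving map $f:P\rightarrow \mathcal C(P)$, when composed with the inclusion $\mathcal C(P)\hookrightarrow \Bidom P$, is a preorder preserving map $f:P\rightarrow \Bidom P$. So to establish CFPP it suffices to produce, for each such $f$, a single element $x\in P$ with $\downarrownogap x$ well founded and $\downarrownogap x\cap f(x)\neq \emptyset$, and then to quote Lemma \ref{lem:wellfounded}.

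The natural choice of witness is the largest element of $P$, call it $1$. Then $\downarrownogap 1 = P$, which is well founded by the hypothesis on $P$, so the first requirement of Lemma \ref{lem:wellfounded} holds at $x:=1$. For the second, $f(1)$ is a nonempty convex subset of $P$, whence $\downarrownogap 1\cap f(1)=P\cap f(1)=f(1)\neq\emptyset$; thus $\downarrownogap 1$ meets $f(1)$. Both hypotheses of Lemma \ref{lem:wellfounded} are therefore met at $x=1$, and the lemma delivers a fixed point of $f$. Since $f$ was an arbitrary order preserving map from $P$ into $\mathcal C(P)$, the poset $P$ has CFPP.

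I do not expect a genuine obstacle here, since all of the real work is already carried by Lemma \ref{lem:wellfounded}, whose inductive construction of a descending sequence $(x_n)$ in a well founded set must stabilize and so yield a fixed point. The only points deserving a line of care are the two trivial verifications above: that $\downarrownogap 1 = P$ because $1$ dominates every element, and that an order preserving map valued in $\mathcal C(P)$ is a preorder preserving map valued in $\Bidom P$. Neither of these presents any difficulty, so the proof is essentially a one-line application of the earlier lemma.
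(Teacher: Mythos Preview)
Your argument is correct and is precisely the paper's own approach: the paper simply states that Fact \ref{wellfounded} follows from Lemma \ref{lem:wellfounded}, and your choice of $x=1$ (so that $\downarrownogap 1=P$ is well founded and meets the nonempty set $f(1)$) is exactly the intended application.
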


\medskip

\begin{definition}
We recall that a poset $P$ is a \emph{well-quasi-order} (wqo for
short) if it contains no infinite antichain and no infinite descending
chain. 
\end{definition}

A well-known result of Higman \cite{higman} shows that if $P$ is
well-quasi-ordered, then $\mathcal {I}(P)$ is well founded. Here we
have the following.

\begin{fact}\label{fact:wqo} 
If $P$ is wqo then $\mathcal C (P)$ is well founded.
\end{fact}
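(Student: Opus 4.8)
The plan is to exploit the embedding of Fact \ref{fact3.1}, which realizes $\mathcal C (P)$ as a subposet of the product $\mathcal {I}(P)\times \mathcal {F}(P)^*$ via $\phi(A):=(\downarrownogap A,\uparrownogap A)$. Since well foundedness is inherited by subposets and by finite products, it suffices to prove that each of the two factors is well founded; the product will then be well founded, and hence so will the subposet $\mathcal C (P)$.

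First I would dispatch $\mathcal {I}(P)$: this is exactly Higman's theorem \cite{higman}, quoted just before the statement, giving that $\mathcal {I}(P)$ is well founded whenever $P$ is wqo. For the second factor I would observe that complementation $I\mapsto P\setminus I$ is an order-reversing bijection between $\mathcal {I}(P)$ and $\mathcal {F}(P)$, and hence an order \emph{isomorphism} from $\mathcal {I}(P)$ onto $\mathcal {F}(P)^*$; thus $\mathcal {F}(P)^*\cong \mathcal {I}(P)$ is well founded as well. This identification is the one genuinely non-formal point, and the place where care is needed: $P$ being wqo does \emph{not} make $P^*$ wqo, so one cannot simply apply Higman to the dual, and it is the reduction of $\mathcal {F}(P)^*$ back to $\mathcal {I}(P)$ through complementation that rescues the argument.

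It then remains to record the elementary permanence fact that a finite product of well founded posets, taken with the componentwise order, is well founded. Given a strictly descending sequence $(I_n,F_n)_{n<\omega}$ in $\mathcal {I}(P)\times \mathcal {F}(P)^*$, each coordinate sequence is weakly descending in a well founded poset, hence eventually constant: a weakly descending sequence $a_0\geq a_1\geq\cdots$ has a minimal element $a_N$ in its range, and since $a_n\leq a_N$ for $n\geq N$ this forces $a_n=a_N$ for all such $n$. Both coordinates being eventually constant contradicts strict descent at every step, so no infinite strictly descending chain exists. Restricting along $\phi$, we conclude that $\mathcal C (P)$ contains no infinite strictly descending chain, i.e. it is well founded. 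I expect no serious obstacle here: once the complementation isomorphism is noticed, the whole statement reduces to Higman's theorem together with the standard stability of well foundedness under products and subposets.
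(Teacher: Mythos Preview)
Your proof is correct and follows essentially the same route as the paper's: embed $\mathcal C(P)$ into $\mathcal I(P)\times\mathcal F(P)^*$ via Fact~\ref{fact3.1}, identify $\mathcal F(P)^*$ with $\mathcal I(P)$ (the paper just asserts this; your complementation map makes it explicit), invoke Higman for the well foundedness of $\mathcal I(P)$, and conclude by the stability of well foundedness under products and subposets. The only difference is that you spell out the details the paper leaves implicit.
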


\begin{proof} 
According to Fact \ref{fact3.1}, $\mathcal C (P)$ is embeddable
into the direct product $\mathcal{I}(P)\times \mathcal
F(P)^{*}$. Since $\mathcal F(P)^{*}$ is isomorphic to
$\mathcal{I}(P)$, each factor of this product is well founded. It
turns out that the product is well founded, and that its subsets are
well founded too.
\end{proof}

\begin{lemma}\label{lem: wqo+C(P)} 
If $P$ is wqo with a  largest element, then $\mathcal C (P)$ has FPP. 
\end{lemma}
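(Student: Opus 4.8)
The plan is to transport to $\mathcal C(P)$ the two facts already proved for the base poset: that being wqo forces $\mathcal C(P)$ to be well founded (Fact \ref{fact:wqo}), and that a well founded poset with a largest element has CFPP (Fact \ref{wellfounded}). The whole argument then amounts to recognizing that the poset $\mathcal C(P)$ is \emph{itself} a well founded poset possessing a greatest element, and that for such a poset FPP is immediate. First, since $P$ is wqo, Fact \ref{fact:wqo} gives at once that $\mathcal C(P)$ is well founded, so nothing new is needed on that side.

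The one genuinely new point is to exhibit a greatest element of $\mathcal C(P)$. Let $1$ be the largest element of $P$ and consider the singleton $\{1\}$, which is convex. For an arbitrary nonempty convex set $X$, the inequality $X\leq \{1\}$ unwinds, straight from the definition of the bi-dominating order in \eqref{eq:main1}, into the two conditions $X\subseteq \downarrownogap\{1\}$ and $\{1\}\subseteq \uparrownogap X$. The first holds because $\downarrownogap\{1\}=P$, and the second because $1$ dominates every element of the nonempty set $X$, so $1\in\uparrownogap X$. Hence $\{1\}$ is a greatest element of $\mathcal C(P)$; the care here is to argue from the bi-domination order rather than from set inclusion.

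With $\mathcal C(P)$ now seen to be well founded and to have a largest element, Fact \ref{wellfounded} applies to $\mathcal C(P)$ and yields that $\mathcal C(P)$ has CFPP, which is stronger than the asserted FPP. Indeed, given any order preserving self-map $g$ of $\mathcal C(P)$, the singleton-valued map $X\mapsto \{g(X)\}$ into $\mathcal C(\mathcal C(P))$ is order preserving (for singletons $\{a\}\leq\{b\}$ iff $a\leq b$), so CFPP provides a fixed point $X\in\{g(X)\}$, that is $g(X)=X$. I expect no serious obstacle: all the weight has been moved into the earlier facts, and the only mechanism underlying Fact \ref{wellfounded} (via Lemma \ref{lem:wellfounded}) is the stabilization, forced by well-foundedness, of the descending chain $t\geq g(t)\geq g^{2}(t)\geq\cdots$ starting from the top element $t$, which must terminate at a fixed point.
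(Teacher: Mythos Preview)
Your proof is correct and follows essentially the same route as the paper: identify $\{1\}$ as the top of $\mathcal C(P)$, invoke Fact~\ref{fact:wqo} for well-foundedness, and conclude FPP. The paper is terser (it simply says a well-founded poset with a largest element has FPP), while you route through Fact~\ref{wellfounded} and then specialize CFPP to FPP---a slight detour, but harmless and fully justified.
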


\begin{proof} 
Let $Q:= \mathcal C (P)$. Then $Q$ has a largest element (namely
$\{a\}$ where $a$ is the largest element of $P$) and is well founded
(Fact \ref{fact:wqo}). Thus it has FPP.
\end{proof}

\vspace{.3cm}

As the following example shows, the well foundedness of $P$ is not enough in Lemma \ref{lem: wqo+C(P)}. 

\begin{example}\label{examplenotFPP} 
Let $F:= \{a,b,c\}$ be a three element set and $Q:= F\times \N$.  For
$x\in F$ set $x_n:= (x, n)$. Order $Q$ in such a way that $a_n<b_n$,
$c_m<b_n$ and $c_m\leq c_n$ for all $m\leq n$. With a top $0$ and
bottom $1$ added to $Q$, the resulting poset $\overline Q$ is a
lattice, in fact a complete lattice.  
\end{example}
 
\begin{claim}\label{lem:nofixedpoint}
The complete lattice $\overline Q$ has CFPP but $\mathcal
C (\overline Q)$ does not have FPP.
\end{claim}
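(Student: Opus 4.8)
The plan is to prove the two assertions separately: that $\overline Q$ has CFPP, and that $\mathcal C(\overline Q)$ fails FPP. The first is a direct application of the machinery already in place. I would observe that $\overline Q$ is \emph{well founded}: the only strictly ascending behaviour is the chain $c_0<c_1<\cdots$, while every element other than $0$ lies above $1$ by a chain of finite length ($a_n$ at height $1$, $c_n$ at height $n+1$, $b_n$ at height $n+2$), and any descending sequence starting at $0$ lands after one step at such an element, hence is finite. Since $\overline Q$ also has largest element $0$, Fact \ref{wellfounded} gives CFPP at once.

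For the failure of FPP for $\mathcal C(\overline Q)$, the strategy is to exhibit a totally ordered gap and invoke the well-known fact that a poset with a totally ordered gap has no FPP. I would set, for $p,m\in\N$,
\[
A_p:=\{a_j:j\in\N\}\cup\{c_k:k\ge p\},\qquad V_m:=\{a_j:j\in\N\}\cup\{b_j:j\ge m\},
\]
and take $\mathcal A:=\{A_p:p\in\N\}$ and $\mathcal B:=\{V_m:m\in\N\}$. Each $A_p,V_m$ is convex, since the $a_j$ are pairwise incomparable and incomparable to every $c_k$, and $[a_j,b_j]=\{a_j,b_j\}$. The key computations I expect are $\downarrow A_p=\{a_j\}\cup\{c_k\}\cup\{1\}$ (independent of $p$) and $\uparrow A_p=\{a_j\}\cup\{b_j\}\cup\{c_k:k\ge p\}\cup\{0\}$, so $\uparrow A_p$ strictly shrinks and $\mathcal A$ is a strictly increasing chain; dually $\uparrow V_m=\{a_j\}\cup\{b_j\}\cup\{0\}$ (independent of $m$) and $\downarrow V_m=\{a_j\}\cup\{b_j:j\ge m\}\cup\{c_k\}\cup\{1\}$, so $\mathcal B$ is strictly decreasing. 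From $\downarrow A_p\subseteq\downarrow V_m$ and $\uparrow V_m\subseteq\uparrow A_p$ one reads off $A_p\le V_m$ for all $p,m$, so $(\mathcal A,\mathcal B)$ is a totally ordered pregap.

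It remains to check that $(\mathcal A,\mathcal B)$ is a gap, and this is the crux. I would compute $F_{\mathcal A}=\bigcap_p\uparrow A_p=\{a_j\}\cup\{b_j\}\cup\{0\}$ and $I_{\mathcal B}=\bigcap_m\downarrow V_m=\{a_j\}\cup\{c_k\}\cup\{1\}$, whence $F_{\mathcal A}\cap I_{\mathcal B}=\{a_j:j\in\N\}$ is nonempty --- as it must be, since the CFPP of $\overline Q$ together with (the contrapositive of) Lemma \ref{lem:fix-pointfree} forbids a totally ordered pregap with empty $F_{\mathcal A}\cap I_{\mathcal B}$. By Lemma \ref{lem:pregaps}(i), if the pregap were separable then $F_{\mathcal A}\cap I_{\mathcal B}=\{a_j\}$ would separate it; but $\downarrow\{a_j\}=\{a_j\}\cup\{1\}$ contains no $c_k$, so $\downarrow A_p\not\subseteq\downarrow\{a_j\}$ and hence $A_p\not\le\{a_j\}$. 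Thus $\{a_j\}$ does not separate, and the contrapositive of Lemma \ref{lem:pregaps}(i) shows the pregap is not separable, i.e.\ it is a totally ordered gap; the well-known fact then gives that $\mathcal C(\overline Q)$ has no FPP. The main obstacle is precisely this last point: one must choose chains whose common ``limit'' $\{a_j\}$ is a genuine nonempty convex set yet has a down-closure that omits the $c_k$'s, so that it sits below none of the $A_p$. The three columns $a,b,c$ are engineered exactly for this --- the $c$'s force the downsets of $\mathcal A$ to carry the whole $c$-chain, the $b$'s carry $\mathcal B$, and the $a$'s supply a nonempty but non-separating meet.
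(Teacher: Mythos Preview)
Your proposal is correct and follows essentially the same approach as the paper: well-foundedness plus a largest element gives CFPP via Fact~\ref{wellfounded}, and the same totally ordered $(\omega,\omega^*)$-gap $(\mathcal A,\mathcal B)$ with $A_p=\{a_j\}\cup\{c_k:k\ge p\}$ and $V_m=\{a_j\}\cup\{b_j:j\ge m\}$ is exhibited, with $F_{\mathcal A}\cap I_{\mathcal B}=\{a_j:j\in\N\}$ failing to separate because its downset omits the $c$-column. Your convexity checks, up/down-set computations, and use of Lemma~\ref{lem:pregaps}(i) are all accurate and mirror the paper's calculations.
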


\begin{proof}
$\overline Q$ has a largest element and is well founded, thus it has
CFPP (Fact \ref{wellfounded}).  $\mathcal C (\overline Q)$ does not
have FPP because it contains an $(\omega, \omega^*)$-gap. 

Indeed let:
\[\begin{array}{ll}
A=\{a_n : n \in \N\}, & A_{\geq n} = \{ a_m: m \geq n\} \\
B=\{b_n : n \in \N\}, & B_{\geq n} = \{ b_m: m \geq n\} \\
C=\{c_n : n \in \N\}, & C_{\geq n} = \{ c_m: m \geq n\} \\
F_n = \uparrownogap (A \cup \{c_n\}), & F= \bigcap F_n \\
I_n = \downarrownogap (A \cup B_{\geq n}), & I= \bigcap I_n \\
\end{array}\]

Then a straightforward calculation yields: 
\[\begin{array}{ll}
F_n = A \cup B \cup C_{\geq n} \cup \{1\}  & F = A \cup B \cup \{1\} \\
I_n = A \cup B_{\geq n} \cup C  \cup \{0\}  & I = A \cup C \cup \{0\} \\
\end{array}\]

Now let $A_n=F_n\cap I = A \cup C_{\geq n}$ and $B_n=I_n\cap F = A
\cup B_{\geq n}$, and define $\mathcal G:=(\mathcal A, \mathcal B)$
where $\mathcal A:= \{A_n : n\in\N\}$, $\mathcal B:= \{B_n: n\in\N\}$.
We have $A_n<A_{n+1}\leq B_{m+1}<B_m$ for all $n, m\in N$, hence
$\mathcal G$ is a totally ordered pregap; we also have
$\downarrownogap B_n=I_n$, $\uparrownogap A_n=F_n$, hence $F_{\mathcal
A}=F$, $I_{\mathcal B}= I$ and $F_{\mathcal A}\cap I_{\mathcal B}=A$.
If $\mathcal G$ was a separable pregap then $F_{\mathcal A}\cap
I_{\mathcal B}$ would separate it (Lemma \ref{lem:pregaps}); since
$A_n \not\leq A$ this is not the case. Thus $\mathcal G$ is a gap.

\end{proof}
 
Fact \ref {wellfounded} and Lemma \ref {lem: wqo+C(P)} immediately
yield that \emph{a finite poset $P$ with a largest element has CFPP and
$\mathcal C (P)$ has FPP}. As we will see in Theorem \ref{thm:not
FPP} this property extends to dismantlable posets.  For that, we need
some properties of retracts.

\begin{lemma}\label{lem:C(P)retraction} 
If $Q$ is an order retract of $P$ via the maps $s: Q \rightarrow P$
and $r:P\rightarrow Q$ then $\mathcal C (Q)$ is an order retract of
$\mathcal C (P)$ via the maps $\overline s: \mathcal C (Q) \rightarrow
\mathcal C (P)$ and $\overline r:\mathcal C (P)\rightarrow \mathcal C
(Q)$ defined by $\overline s(Y):= Conv_P(s[Y])$ and $\overline r(X):=
Conv_Q(r[X])$ for all $Y \in \mathcal C (Q)$ and $X\in \mathcal C (P)$.
\end{lemma}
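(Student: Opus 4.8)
The plan is to verify the three conditions that make $(\overline s,\overline r)$ a retraction–coretraction pair in the sense of the definition recalled before Lemma \ref{lem:retractRFPP}: that $\overline s$ and $\overline r$ are order preserving for the bi-dominating order, and that $\overline r\circ\overline s=1_{\mathcal C (Q)}$. First note that both maps are well defined, since the convex envelope of a nonempty set is a nonempty convex set, so $\overline s$ and $\overline r$ indeed take values in $\mathcal C (P)$ and $\mathcal C (Q)$ respectively.

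The order-preservation statements rest on one elementary observation: if $g$ is an order preserving map from a poset to a poset and $X\leq Y$ in the bi-dominating preorder on subsets of the domain, then $g[X]\leq g[Y]$ in the bi-dominating preorder on subsets of the codomain. Indeed, every $x\in X$ lies below some $y\in Y$, whence $g(x)\leq g(y)\in g[Y]$, giving $g[X]\subseteq \downarrownogap g[Y]$; the inclusion $g[Y]\subseteq \uparrownogap g[X]$ is symmetric. Applying this to $s$ (resp. to $r$) and using the facts $Conv_P(s[Y])\equiv s[Y]$ and $Conv_Q(r[X])\equiv r[X]$ established at the start of this section, I obtain that $\overline s$ and $\overline r$ are order preserving.

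The heart of the proof is the identity $\overline r\circ\overline s=1_{\mathcal C (Q)}$. Fix $Y\in\mathcal C (Q)$; I claim that $r[Conv_P(s[Y])]\equiv Y$. Since $r\circ s=1_Q$, for each $y\in Y$ we have $y=r(s(y))\in r[Conv_P(s[Y])]$, using $s[Y]\subseteq Conv_P(s[Y])$; thus $Y\subseteq r[Conv_P(s[Y])]$, which already yields $Y\subseteq\downarrownogap r[Conv_P(s[Y])]$ and $Y\subseteq\uparrownogap r[Conv_P(s[Y])]$. Conversely, any $z\in Conv_P(s[Y])$ satisfies $s(y)\leq z\leq s(y')$ for some $y,y'\in Y$, so applying the order preserving map $r$ gives $y\leq r(z)\leq y'$ with $y,y'\in Y$; hence $r[Conv_P(s[Y])]\subseteq\downarrownogap Y\cap\uparrownogap Y$. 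Combining the two, both $Y\leq r[Conv_P(s[Y])]$ and $r[Conv_P(s[Y])]\leq Y$ hold, so $r[Conv_P(s[Y])]\equiv Y$.

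Finally, since $\overline r(\overline s(Y))=Conv_Q(r[Conv_P(s[Y])])\equiv r[Conv_P(s[Y])]\equiv Y$, and since both $\overline r(\overline s(Y))$ (a convex envelope) and $Y$ are convex, the characterization $X\equiv Y\Leftrightarrow Conv(X)=Conv(Y)$ forces $\overline r(\overline s(Y))=Y$. The main obstacle is exactly this composition identity: the order-preservation steps are routine once the push-forward observation is isolated, but the retraction equation requires threading the identity $r\circ s=1_Q$ through two convex-envelope operations and then upgrading the resulting bi-domination equivalence to genuine equality, which is legitimate because the induced order on $\mathcal C (Q)$ is antisymmetric on convex sets.
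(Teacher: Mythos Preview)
Your proof is correct and follows essentially the same approach as the paper's: both show order preservation is routine and then establish $\overline r\circ\overline s=1_{\mathcal C(Q)}$ via the two inclusions $Y\subseteq r[Conv_P(s[Y])]$ and $r[Conv_P(s[Y])]\subseteq \downarrownogap Y\cap\uparrownogap Y$. The only difference is cosmetic: since $Y$ is convex, $\downarrownogap Y\cap\uparrownogap Y=Y$, so your two inclusions already give the set equality $r[Conv_P(s[Y])]=Y$ directly (as the paper does), and the detour through the bi-domination equivalence and the characterization $X\equiv Y\Leftrightarrow Conv(X)=Conv(Y)$ is unnecessary.
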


\begin{proof} 
As it is easy to check, the maps $\overline s$ and $\overline r$ are
order preserving. To conclude it suffices to prove that $\overline
r\circ \overline s$ is the identity on $\mathcal C (Q)$. Let $Y\in
\mathcal C (Q)$. Since $s[Y]\subseteq Conv_P(s[Y])$ we have
$Y=r[s[Y]] \subseteq r[Conv_P(s[Y])] \subseteq Conv_Q(r[Conv_P(s[Y])])
= \overline r(\overline s(Y))$. Observing that $r[Conv_P(Z)] \subseteq
conv_Q(r[Z])$ for every subset $Z$ of $P$, we have $r[Conv_P(s[Y])]
\subseteq Conv_Q(r[s[Y])]) = Conv_Q[Y] = Y$ and since $Y$ is convex
$\overline r(\overline s(Y)) = Conv_Q(r[Conv_P(s[Y])]) \subseteq
Y$. Thus $\overline r(\overline s(Y)) = Y$.
\end{proof}

\vspace{.3cm}

Since FPP is preserved under retraction, we have immediately the following: 

\begin{corollary}\label{lem:FPPretraction} 
If $Q$ is an order retract of $P$ and $\mathcal C (P)$ has FPP then
$\mathcal C (Q)$ has FPP.
\end{corollary}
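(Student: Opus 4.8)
The plan is to derive this directly from Lemma \ref{lem:C(P)retraction} together with the standard fact that the ordinary fixed point property passes to retracts. By Lemma \ref{lem:C(P)retraction}, the hypothesis that $Q$ is an order retract of $P$ (via $s:Q\to P$ and $r:P\to Q$) already provides order preserving maps $\overline s:\mathcal C(Q)\to\mathcal C(P)$ and $\overline r:\mathcal C(P)\to\mathcal C(Q)$ with $\overline r\circ\overline s=1_{\mathcal C(Q)}$; that is, $\mathcal C(Q)$ is itself an order retract of $\mathcal C(P)$. So the entire content of the corollary is the transfer of FPP from a poset to its retracts, specialized to the pair $\mathcal C(P)$, $\mathcal C(Q)$.

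First I would record the transfer principle, whose proof is the single-valued analogue of Lemma \ref{lem:retractRFPP}. Suppose a poset $X$ has FPP and $Y$ is an order retract of $X$ via order preserving maps $\sigma:Y\to X$ and $\rho:X\to Y$ with $\rho\circ\sigma=1_Y$. Given an order preserving $g:Y\to Y$, the composite $\sigma\circ g\circ\rho:X\to X$ is order preserving, hence has a fixed point $u$, so $\sigma(g(\rho(u)))=u$. Applying $\rho$ and using $\rho\circ\sigma=1_Y$ yields $g(\rho(u))=\rho(u)$, so $\rho(u)$ is a fixed point of $g$. Thus $Y$ has FPP.

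Then I would simply instantiate this with $X:=\mathcal C(P)$, $Y:=\mathcal C(Q)$, $\sigma:=\overline s$ and $\rho:=\overline r$. Since $\mathcal C(P)$ has FPP by hypothesis and $\mathcal C(Q)$ is a retract of it by Lemma \ref{lem:C(P)retraction}, the transfer principle gives that $\mathcal C(Q)$ has FPP, which completes the proof.

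There is essentially no obstacle here: the corollary is immediate once Lemma \ref{lem:C(P)retraction} is in hand, and the only reusable ingredient is the elementary retract-transfer argument, which is already present in the paper in the multivalued setting as Lemma \ref{lem:retractRFPP}. The single-valued version used above is a strict simplification of that argument, so nothing new needs to be verified.
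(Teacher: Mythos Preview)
Your proof is correct and follows exactly the paper's approach: invoke Lemma~\ref{lem:C(P)retraction} to see that $\mathcal C(Q)$ is a retract of $\mathcal C(P)$, then use that FPP is preserved under retracts. The paper states this in one line, while you have additionally spelled out the retract-transfer argument, which is fine.
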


\begin{corollary}\label{lem:CFPPretraction} 
CFPP is preserved under retraction.
\end{corollary}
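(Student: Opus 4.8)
The plan is to mimic the proof of Lemma \ref{lem:retractRFPP}, replacing the bare direct image $s[\cdot]$ by the convexified image $\overline{s}(\cdot)=Conv_P(s[\cdot])$, so that the pulled-back map lands in $\mathcal{C}(P)$ rather than merely in $\Bidom P$. So I would assume $P$ has CFPP and that $Q$ is an order retract of $P$ via order preserving maps $s:Q\rightarrow P$ and $r:P\rightarrow Q$ with $r\circ s=1_Q$. Given an order preserving map $g:Q\rightarrow\mathcal{C}(Q)$, I would define $h:P\rightarrow\mathcal{C}(P)$ by $h:=\overline{s}\circ g\circ r$, that is $h(x)=Conv_P(s[g(r(x))])$.

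First I would check that $h$ is order preserving. This is immediate: $r$ and $g$ are order preserving by hypothesis, and $\overline{s}$ is order preserving by Lemma \ref{lem:C(P)retraction}, so the composition $h$ is order preserving and takes values in $\mathcal{C}(P)$. Since $P$ has CFPP, $h$ has a fixed point $u$, i.e. $u\in Conv_P(s[g(r(u))])$.

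Next I would extract a fixed point of $g$. By definition of the convex envelope, $u\in Conv_P(s[g(r(u))])$ means there are $y,y'\in g(r(u))$ with $s(y)\leq u\leq s(y')$. Applying the order preserving map $r$ and using $r\circ s=1_Q$ gives $y\leq r(u)\leq y'$. Since $g(r(u))$ is a convex subset of $Q$ containing both $y$ and $y'$, it contains $r(u)$ as well. Thus $r(u)\in g(r(u))$, so $q:=r(u)$ is a fixed point of $g$, which establishes that $Q$ has CFPP.

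The argument is essentially routine once Lemma \ref{lem:C(P)retraction} is in hand; the only place requiring care is the last step, where the convex envelope in the definition of $h$ blocks the clean equality $u=s(y)$ that was available in the RFPP setting of Lemma \ref{lem:retractRFPP}. Here one only recovers two elements of $s[g(r(u))]$ that bracket $u$, and it is precisely the convexity of the value $g(r(u))$ --- which holds because $g$ takes values in $\mathcal{C}(Q)$ --- that converts this bracketing into the membership $r(u)\in g(r(u))$. I expect this to be the main (and only) conceptual obstacle; everything else is a direct transcription of the retract argument already used for RFPP.
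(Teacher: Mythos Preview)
Your proof is correct and follows essentially the same approach as the paper: define $h:=\overline{s}\circ g\circ r$, use CFPP of $P$ to obtain a fixed point $u$, and show that $r(u)$ is a fixed point of $g$. The only cosmetic difference is in the last verification: the paper invokes the identity $\overline{r}\circ\overline{s}=1_{\mathcal{C}(Q)}$ from Lemma~\ref{lem:C(P)retraction} (noting $r(u)\in\overline{r}(h(u))=\overline{r}\circ\overline{s}(g(r(u)))=g(r(u))$), whereas you unpack the convex envelope by hand and appeal directly to the convexity of $g(r(u))$---the underlying content is identical.
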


\begin{proof}
Let $P$ be a poset satisfying CFPP. Suppose that $Q$ is an order
retract of $P$ and let $g: Q \rightarrow \mathcal C (Q)$ be an
order preserving map. Let $s: Q \rightarrow P$ and $r:P\rightarrow Q$
be two order preserving maps such that $r\circ s =1_{Q}$ and let
$\overline s: \mathcal C (Q) \rightarrow \mathcal C (P)$ and
$\overline r:\mathcal C (P)\rightarrow \mathcal C (Q)$ be given
by Lemma \ref {lem:C(P)retraction} above.  The map $h: P\rightarrow
\mathcal C (P)$ defined by $h:= \overline s\circ g\circ r $ is
order preserving, hence it has a fixed point, say $x$.  We claim that
$y:=r(x)$ is a fixed point of $g$.  Indeed, since $x\in h(x)$,
$r(x)\in \overline r(h(x))= \overline r\circ \overline s(g(r(x)))=
g(r(x))$, proving our claim.
\end{proof}
 
 \vspace{.3cm}

\begin{definition}
An element $x$ of a poset $P$ is an \emph{irreducible} of $P$ if
either $\{y\in P: y<x\}$ has a largest element or else or $\{y\in P:
y>x\}$ has a least element.
\end{definition}

We note that if $x$ is an irreducible of $P$ then $P_{-x}$, the poset
obtained from $P$ by deleting $x$, is a retract of $P$.

\begin{lemma}\label{lem: joinirreducibility}
Let $P$ be a finite poset and $x$ be an irreducible of $P$.  Then
$\mathcal C (P)$ has FPP if and only if $\mathcal C (P_{-x})$ has
FPP.
\end{lemma}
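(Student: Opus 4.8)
The plan is to prove the two implications separately, the forward one being immediate and the reverse one carrying all the weight.

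For the ``only if'' direction I would invoke the observation recorded just before the statement: since $x$ is irreducible, $P_{-x}$ is an order retract of $P$. Hence by Corollary \ref{lem:FPPretraction}, if $\mathcal C (P)$ has FPP then $\mathcal C (P_{-x})$ has FPP, and nothing more is needed here.

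For the ``if'' direction, assume $\mathcal C (P_{-x})$ has FPP. By duality (convexity is self-dual and the bi-domination order reverses under passage to $P^{*}$, so that $\mathcal C (P^{*})\cong \mathcal C (P)^{*}$ while $(P_{-x})^{*}=(P^{*})_{-x}$ and FPP is self-dual), I may assume that the set of elements strictly below $x$ has a largest element $x^{-}$. Let $r:P\to P_{-x}$ be the retraction fixing every point except $x$, with $r(x):=x^{-}$, and let $s:P_{-x}\to P$ be the inclusion; then $r\leq \mathrm{id}_{P}$ pointwise and $r\circ s=1_{P_{-x}}$. Lemma \ref{lem:C(P)retraction} furnishes the induced retraction pair $\overline s,\overline r$ between $\mathcal C (P_{-x})$ and $\mathcal C (P)$. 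Set $\rho:=\overline s\circ \overline r:\mathcal C (P)\to \mathcal C (P)$; its image is the order-isomorphic copy $\overline s[\mathcal C (P_{-x})]$ of $\mathcal C (P_{-x})$, which therefore has FPP.

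The key step is to show that $\rho$ lies below the identity in the bi-domination order, i.e. $\rho(X)\leq X$ for every $X\in\mathcal C (P)$. First I would establish the identity $\rho(X)=Conv_P(r[X])$: since $s$ is the inclusion and $r[X]\subseteq P_{-x}$, one has $r[X]\subseteq Conv_{P_{-x}}(r[X])\subseteq Conv_P(r[X])$, and applying the (monotone, idempotent) operator $Conv_P$ throughout collapses $\rho(X)=Conv_P(Conv_{P_{-x}}(r[X]))$ to $Conv_P(r[X])$. Then, because $r\leq \mathrm{id}_{P}$, each element of $r[X]$ lies below an element of $X$ and each element of $X$ lies above an element of $r[X]$, so $r[X]\leq X$; combined with $Conv_P(r[X])\equiv r[X]$ this gives $\rho(X)\leq X$. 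I expect this comparison --- in particular pinning down $\rho(X)$ despite the two distinct convex-envelope operators $Conv_P$ and $Conv_{P_{-x}}$ --- to be the main obstacle.

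Finally I would finish with a standard ascending-chain argument, valid because $P$, and hence $\mathcal C (P)$, is finite. Given an order preserving $f:\mathcal C (P)\to\mathcal C (P)$, form the map $g:=\rho\circ f$ restricted to the copy $\overline s[\mathcal C (P_{-x})]$; this is an order preserving self-map of a poset with FPP, so it has a fixed point $X_{0}$, meaning $\rho(f(X_{0}))=X_{0}$. Since $\rho\leq \mathrm{id}$ yields $X_{0}=\rho(f(X_{0}))\leq f(X_{0})$, the chain $X_{0}\leq f(X_{0})\leq f^{2}(X_{0})\leq\cdots$ is ascending in the finite poset $\mathcal C (P)$ and hence stabilizes at a fixed point of $f$. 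Thus $\mathcal C (P)$ has FPP.
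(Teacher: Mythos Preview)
Your proof is correct and follows the same architecture as the paper's: the forward direction via retraction, and the reverse by transporting a fixed point from $\mathcal C(P_{-x})$ to some $X_0\in\mathcal C(P)$, showing $X_0\le f(X_0)$, and iterating in the finite poset $\mathcal C(P)$. The only difference is in the key inequality: you obtain $X_0\le f(X_0)$ from the clean global fact $\rho=\overline s\circ\overline r\le\mathrm{id}$ (since $r\le\mathrm{id}_P$), while the paper argues it by a short case analysis on whether $x$ lies in $X$ and in $f(X)$.
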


\begin {proof} 
Since $P_{-x}$ is a retract of $P$, then $\mathcal C (P_{-x})$ is a
retract of $\mathcal C (P)$ (by Lemma
\ref{lem:C(P)retraction}). Since FPP is preserved under retraction, if
$\mathcal C (P)$ has FPP, $\mathcal C (P_{-x})$ has FPP
too. 

Conversely, suppose that $\mathcal C (P_{-x})$ has FPP, and let
$f:\mathcal C (P)\rightarrow \mathcal C (P)$ be an order preserving
map. We prove that $f$ has a fixed point. For that, we will set
$Q:=P_{-x}$, denote by $s$ the identity map from $Q$ to $P$, suppose
that $\{y\in P: y<x\}$ has a largest element $x^-$, and denote by $r$
the retraction map defined on $P$ by $r(x):=x^-$ and $r(y)=y$ for all
$y\not =x$.  Let $\overline s$ and $\overline r$ the maps defined in
Lemma \ref{lem:C(P)retraction} (that is $\overline s(Y):=Conv_P(Y)$
and $\overline r(X):= Conv_Q(r[X])$ if $Y\in \mathcal C (Q)$ and $X\in
\mathcal C (Q)$). The map $g:= \overline r\circ f\circ \overline s$
has a fixed point $Y$. Set $X:= \overline s(Y)$.

\begin{claim}\label{claim: comparability} 
$X\leq f(X)$.  
\end{claim} 


\begin{proof}
Note that $Y\subseteq X\subseteq Y\cup \{x\}$ and $Y\subseteq
f(X)\subseteq Y\cup \{x\}$.
 
\noindent Case 1. $X=Y$, that is $x\not\in X$. 

Subcase 1. $x\not \in f(X)$.  In this case, $\overline r(f(X))=f(X)$
thus $Y=f(X)$ and since $X=Y$, $X=f(X)$ proving our claim.  

Subcase 2. $x\in f(X)$. In this case $f(X)=X\cup \{x\}$ and $x^-\in
X$. Since $x^-\leq x$ this yields $X\leq f(X)$.
 
\noindent Case 2. $x\in X$. In this case $X= Y \cup \{x\}$ and $f(X)=X$. The
first equality amounts to $x\in X$.  For the second note that there
are $a, b\in Y$ such that $a\leq x\leq b$. Since $Y\subseteq f(X)$
this yields $Y\cup \{x\}\subseteq f(X)$. Since $f(X)\subseteq Y\cup
\{x\}$, this gives $f(X)= Y\cup \{x\}$.  
\end{proof}

Now, set $X_0:=X$ and $X_{n+1}:=f(X_{n})$ for every $n\in \N$. We have
$X_n\leq X_{n+1}$. Since $P$ is finite, $\mathcal C (P)$ is finite
too, hence the sequence is stationary. Its largest element is a fixed
point of $f$.
 
Now, suppose that $\{y\in P: y>x\}$ has a least element. Since
$\mathcal C (Q^*)$ is the dual of $\mathcal C (Q)$ it has FPP,
thus the proof above tells us that $\mathcal C (P^*)$ has FPP,
hence by the same token $\mathcal C (P)$ has FPP. \end{proof}

We mention here that we do not know if the the finiteness assumption
in Lemma \ref {lem: joinirreducibility} can be removed.

\begin{definition} 
We recall that a finite poset $P$ is \emph{dismantlable} if there is
an enumeration $x_0, \dots , x_{n-1}$ of its elements such that
$x_{i}$ is irreducible in $P\setminus \{x_j:j<i\}$ for every
$i<n-1$. 
\end{definition}

For example every finite poset with a least, or a largest, element is
dismantlable.

\begin{corollary}\label{cor:dismantlable} 
If a finite poset $P$ is dismantlable then $\mathcal C (P)$ has FPP.
\end{corollary}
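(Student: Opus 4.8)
The plan is to derive this immediately from Lemma \ref{lem: joinirreducibility} by induction along the dismantling sequence. Fix an enumeration $x_0, \dots, x_{n-1}$ witnessing that $P$ is dismantlable, and set $P_k := P\setminus\{x_j : j<k\}$ for $0\le k\le n-1$, so that $P_0 = P$ and $P_{n-1} = \{x_{n-1}\}$ is a one-element poset. By the definition of dismantlability, for each $k<n-1$ the element $x_k$ is irreducible in $P_k$, and deleting it produces $P_{k+1}=(P_k)_{-x_k}$. This filtration reduces the problem to a trivial base case together with $n-1$ applications of the already-established equivalence.

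The base case is immediate: $P_{n-1}$ is a single point, so $\mathcal C(P_{n-1})$ is itself a one-element poset and obviously has FPP (alternatively, a single point is wqo with a largest element, so Lemma \ref{lem: wqo+C(P)} applies directly). For the inductive step I would apply Lemma \ref{lem: joinirreducibility} with $P_k$ in place of $P$ and $x_k$ as the irreducible element, which gives that $\mathcal C(P_k)$ has FPP if and only if $\mathcal C(P_{k+1})$ has FPP. Running this equivalence downward from $k=n-2$ to $k=0$ transfers FPP from $\mathcal C(P_{n-1})$ back through each $\mathcal C(P_k)$ to $\mathcal C(P_0)=\mathcal C(P)$, which is exactly the desired conclusion.

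There is essentially no obstacle here, since all the substantive work has already been carried out in Lemma \ref{lem: joinirreducibility}. The only points needing any care are verifying that the dismantling sequence genuinely reduces $P$ down to a single element — which is precisely what the definition guarantees, the irreducibility condition being imposed for every $i<n-1$ — and checking that each intermediate poset $P_k$ remains finite so that the finiteness hypothesis of the lemma is met at every stage; both of these are immediate from the construction.
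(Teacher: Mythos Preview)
Your proof is correct and follows essentially the same approach as the paper: both argue by induction, reducing to a one-point base case via repeated applications of Lemma~\ref{lem: joinirreducibility}. The only cosmetic difference is that the paper phrases the induction on the cardinality of $P$ rather than along the explicit dismantling sequence, but the content is identical.
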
 

\begin{proof} 
We argue by induction on the cardinality $n$ of $P$. If $n\leq 1$, $P$
has FPP. If $n\geq 2$ then $P$ contains an irreducible element $x$
such that $P_{-x}$ is dismantlable.  By induction $\mathcal C
(P_{-x})$ has FPP. According to Lemma \ref {lem: joinirreducibility}
$\mathcal C (P)$ has FPP.
\end{proof}
 
The following example shows that the converse does not holds.

\begin{example}\label{examplenondismantlable} 
Let $P$ be the ordinal sum of two antichains $\{a,b\}$ and
$\{c,d\}$. Since $P$ has no irreducible it is not dismantlable. On the
other hand $\mathcal C (P)$ has FPP. Indeed, let $f:\mathcal C
(P)\rightarrow \mathcal C (P)$ be an order preserving map. If there is
some $X\in \mathcal C (P)$ such that $f(X)$ is comparable to $X$ then,
since $\mathcal C (P)$ is finite, $f$ has a fixed point. Let $X:=\{c,
d\}$ and $Y:=\{a, b\}$. We may suppose that $f(X)$ is incomparable to
$X$ and $f(Y)$ is incomparable to $Y$. The elements incomparable to
$X$ are $\{c\}$ and $\{d\}$, whereas the elements incomparable to $Y$
are $\{a\}$ and $\{b\}$. With no loss of generality, we may suppose
that $f(X)=$\{c\}$ and $f(Y)=$\{a\}$. We have $Y\leq\{a, c\}\leq X$,
hence $\{a\}\leq f(\{a, c\})\leq
\{c\}$. But then $f(\{a, c\})$ is comparable to $\{a, c\}$. Thus $f$
has a fixed point.
\end{example}

Walker's characterization of finite posets with RFPP leads to:

\begin{theorem}\label{thm:not FPP} 
If a finite poset  $P$ has CFPP then $\mathcal C (P)$ has FPP. 
\end{theorem}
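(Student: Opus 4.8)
The plan is to route the statement through the two facts the paper has just assembled: the coincidence of CFPP and RFPP for finite posets, and the implication ``dismantlable $\Rightarrow$ $\mathcal C(P)$ has FPP'' from Corollary \ref{cor:dismantlable}. So I would not argue about $\mathcal C(P)$ directly at all; instead I would pass to a purely order-theoretic property of $P$ itself and then read off the conclusion.

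First I would observe that since $P$ is finite it contains no chain of order type $\theta$. Hence the hypothesis of Theorem \ref{RFPP} is met on the nose, and that theorem tells us that for $P$ the properties CFPP and RFPP coincide. Thus from ``$P$ has CFPP'' I obtain ``$P$ has RFPP''. This is the step that converts the given multivalued fixed point property (maps into convex \emph{subsets}) into Walker's relational fixed point property (maps into arbitrary nonempty subsets under bi-domination), which is the form for which a structural classification is available.

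Next I would invoke Walker's characterization of finite posets with RFPP \cite{walker}, quoted in the introduction: a finite poset has RFPP if and only if it is dismantlable. Applying the ``only if'' direction to our $P$ yields that $P$ is dismantlable. Finally, Corollary \ref{cor:dismantlable} says that a dismantlable finite poset $P$ has $\mathcal C(P)$ with FPP, which is exactly the desired conclusion. Chaining the three implications,
\[
P \text{ finite with CFPP} \;\Longrightarrow\; P \text{ has RFPP} \;\Longrightarrow\; P \text{ is dismantlable} \;\Longrightarrow\; \mathcal C(P) \text{ has FPP},
\]
completes the argument.

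I expect no genuine obstacle at this level, since the serious work has already been discharged: Corollary \ref{cor:dismantlable} rests on the irreducible-deletion Lemma \ref{lem: joinirreducibility} and an induction on $|P|$, while Walker's theorem supplies the nontrivial direction (finite RFPP forces a dismantling order). The only point demanding care is to use the correct direction of each equivalence, namely RFPP $\Rightarrow$ dismantlable from Walker and dismantlable $\Rightarrow \mathcal C(P)$ FPP from the corollary; one should also note explicitly that finiteness is what licenses both the absence of a $\theta$-chain in Theorem \ref{RFPP} and the applicability of Walker's finite classification.
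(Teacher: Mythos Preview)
Your proposal is correct and follows exactly the paper's own argument: use Theorem \ref{RFPP} (finiteness gives no $\theta$-chain, so CFPP $\Rightarrow$ RFPP), then Walker's characterization to get dismantlability, then Corollary \ref{cor:dismantlable} to conclude that $\mathcal C(P)$ has FPP.
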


Indeed, if $P$ finite has CFPP then it has RFPP (Theorem
\ref{RFPP}). According to Walker it is dismantlable, thus from
Corollary \ref{cor:dismantlable}, $\mathcal C (P)$ has FPP.

\subsection{Lattices properties and CFPP}
 
Lattice properties do not easily transfer from a poset $P$ to the
poset $\mathcal C(P)$. We give in Example \ref{example:lattice} a
finite lattice $P$ such that $\mathcal C (P)$ is not a lattice (for
an example of infinite and complete $P$ see Example
\ref{example:robert}). Note that since $P$ and $\mathcal C  (P)$
are finite with a largest element, both have CFPP (Fact
\ref{wellfounded}).

\noindent \begin{minipage}[t]{0.5\textwidth} 
\begin{example}\label {example:lattice} 
Let $Q:=\{a, b, 0, 1, c \}\cup \{ij: i<2,j<2\}$ be the $9$-element
poset whose covering pairs are $i<ij$ for $i,j<2$, $a<0$, $a<c$,
$b<c$, $b<1$, and let $P$  be obtained by adding a least and a
largest element to $Q$. Then $P$, shown here, is a lattice. Furthermore, the
subsets $X:= \{ 00, c, 11\}$ and $Y:= \{01, 10\}$ are two convex
subsets (they are antichains of $P$) which have no infimum. Indeed,
let $Z:= (\downarrownogap X)\cap (\downarrownogap Y)= \downarrownogap
\{0, 1\}$.  This is a lower bound of $X$ and $Y$. Inside, $Z_0:= \{0,
a, 1\}$ and $Z_1:=\{0,b, 1\}$ are two maximal lower bounds.
\end{example}
\end{minipage}
\hfill 
\begin{minipage}[t]{0.5\textwidth}
\vspace{1cm}
\begin{center}
\includegraphics[width=8cm]{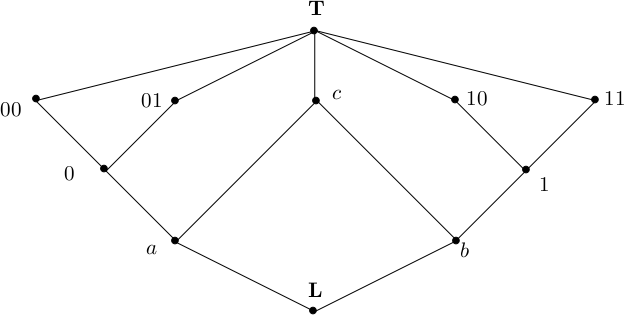}
\end{center}
\bigskip
\end{minipage}

\bigskip

On the other hand, we give in Example \ref{example:robert} below an
example of a complete lattice $T$ without CFPP and such that $\mathcal
C (T)$ does not have $FPP$.  This lattice contains no chain of type
$\theta$.

We recall that \emph{a poset $P$ contains no chain of type $\theta$ if
and only if it is the union of a well founded initial segment and a
dually well founded final segment}. Indeed, set $P^{-}:= \{ x\in P: \; 
 \downarrownogap x\; \text{is well founded} \}$ and $P^{+}:=\{ x\in P: \; 
\uparrownogap x\; \text{is dually well founded}\}$. These two sets
are an initial segment and a final segment respectively. As it is easy
to see, their union is $P$ iff $P$ contains no chain of type $\theta$.
 
\begin{example}\label{example:robert}
Let $2^{<\omega}$, resp. $2^{\omega}$, be the set of finite sequences,
resp. of $\omega$-sequences of $0$ and $1$. Let $s\in 2^{<\omega}\cup
2^{\omega}$; we denote by $dom(s)$ the domain of $s$; the \emph{length}
of $s$, $l(s)$, is the cardinality of $dom(s)$; hence the empty
sequence has length $0$. Let $s, s'\in 2^{<\omega}\cup 2^{\omega}$, we
set $s\leq s'$ if $s=s'_{\restriction dom(s')}$. Set $D:=
2^{<\omega}\times \{0\}$, $U:=2^{<\omega}\times \{1\}$,
$A:=2^{\omega}$.  Let $T:= D\cup U\cup A$ and $S:T\rightarrow T$ define
by $S(x):=x$ if $x\in A$ and $S((s, i)):=(x, i\dot{+}1)$ where the sum
$i\dot{+}1$ is $1$ if $i$ is $0$ and $0$ otherwise. Let $x, y\in T$,
we set $x<y$ in the following cases:

\begin{itemize}
\item $x=(s, i), y=(s', i)$ and  either  $i=0$ and $s<s'$ or $i=1$ and $s'<s$; 
\item $x=(s, 0), y=(s', 1)$ and either $s<s'$ or $s'<s$;
\item $x=(s, 0), y=s'$ and $s<s'$;
\item $x=s, y=(s', 1)$ and $s'<s$.
\end{itemize}
\end{example}

With this in mind we can prove the following claim.

\begin{claim}\label{robertexample}
The set $T$ with the relation $<$  is a complete lattice containing no chain of type
$\theta$. In particular, $\powerset(\omega)$ is not embeddable into
$T$. The lattice $T$ does not have CFPP, the poset $\mathcal C(T)$
is not a lattice and does not have FPP.
\end{claim}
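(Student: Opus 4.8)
The plan is to verify the four assertions in Claim \ref{robertexample} more or less in the order listed, since each relies on structure established just before it. The description of $T$ combines a ``lower'' copy $D=2^{<\omega}\times\{0\}$ ordered by extension, an ``upper'' copy $U=2^{<\omega}\times\{1\}$ ordered by reverse extension, and the branches $A=2^{\omega}$ glued in between. First I would unwind the four cases defining $<$ to confirm it is a partial order and to get an intuitive picture: each finite node $(s,0)\in D$ sits below $(s',1)\in U$ and below branches $s'$ exactly when $s,s'$ are comparable as sequences, so $D$ fans upward and $U$ fans downward with the infinite branches threaded through. With this picture, completeness is the first real task: by the criterion recalled in the Preliminaries, it suffices to check that every totally ordered pregap of $T$ is separated. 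I would take an arbitrary chain, split it into its part in $D$, its part in $A$, and its part in $U$, and exhibit the supremum and infimum directly using least upper bounds/greatest lower bounds of the underlying sequences (longest common prefix for meets going down, and the branch or upper node determined by a chain going up). The main obstacle lives here: one must handle the passage between the finite levels $D\cup U$ and the limit level $A$ carefully, showing that a strictly increasing $\omega$-chain of finite nodes in $D$ acquires as its join the branch $s\in A$ it converges to, so no gap is left open at limit stages.

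Once completeness is in hand, the claim that $T$ contains no chain of type $\theta$ follows from analyzing which elements have a well founded down-set and which have a dually well founded up-set. Using the decomposition recalled in the excerpt, I would show $T=T^-\cup T^+$ by arguing that every element of $D$ lies in $T^-$ (its strict down-set is a finite-node tree below a fixed finite sequence, which is well founded) and every element of $U$ lies in $T^+$ dually, while each branch $a\in A$ has $\downarrownogap a$ well founded because below $a$ one only finds finite nodes $(s,0)$ with $s\le a$, a copy of an initial chain of $\omega$ and hence well founded. Since $T=T^-\cup T^+$, no copy of $\theta$ embeds. The embeddability statement for $\powerset(\omega)$ is then free: $\powerset(\omega)$ contains a chain of type $\theta$, so it cannot embed into a lattice avoiding such chains.

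For the failure of CFPP, the strategy is to produce a totally ordered gap in $\mathcal C(T)$ and invoke Lemma \ref{lem:fix-pointfree}. The natural candidate mimics the $(\omega,\omega^*)$-gap of Claim \ref{lem:nofixedpoint}: along a single infinite branch, say the all-zero branch, the finite nodes $(0^n,0)\in D$ form a strictly increasing $\omega$-chain whose pointwise limit would be the branch itself, while the corresponding upper nodes $(0^n,1)\in U$ form a strictly decreasing $\omega^*$-chain; packaging these into convex sets and passing to the associated special pregap via Lemma \ref{lem:pregaps}, I would check that $F_{\mathcal A}\cap I_{\mathcal B}=\emptyset$, i.e.\ nothing in $T$ simultaneously bounds the increasing family from above and the decreasing family from below, precisely because the would-be separator sits at the limit level but the convex sets are designed to exclude it. Lemma \ref{lem:fix-pointfree} then yields a fixed-point-free order preserving map $T\to\mathcal C(T)$, so $T$ lacks CFPP, and the same totally ordered gap shows $\mathcal C(T)$ lacks FPP.

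Finally, to see $\mathcal C(T)$ is not a lattice, I would exhibit two convex subsets whose meet (or join) fails to exist, in the spirit of Example \ref{example:lattice}: choose two convex antichains straddling a branch point so that the set of their common lower bounds, computed via Fact \ref{fact3.1} as an intersection of initial and final segments, has two incomparable maximal elements and no greatest one. I expect the branching of $2^{<\omega}$ at the root to supply such a configuration directly, with the two maximal lower bounds distinguished by which initial digit they follow. The step I expect to be most delicate overall is the verification of completeness, since it requires matching joins and meets across the finite/infinite boundary; the remaining items are then relatively mechanical consequences of that structure together with the gap lemmas already proved.
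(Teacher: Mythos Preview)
Your plan for completeness and for the absence of $\theta$-chains is sound and matches the paper's argument in spirit; the paper streamlines the completeness check by observing that every maximal chain of $T$ has order type $\omega+1+\omega^*$ and is therefore complete, but your pregap-by-pregap approach would work too.

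The real problem is your route to the failure of CFPP (and, by extension, of FPP for $\mathcal C(T)$). Along a single branch, say the all-zero one, the chain $\{(0^n,0):n\in\omega\}\cup\{0^\omega\}\cup\{(0^n,1):n\in\omega\}$ is complete, so any pregap you build from convex pieces of it is separated by $\{0^\omega\}$. Concretely, whatever convex sets $A\subseteq\{(0^n,0)\}$ and $B\subseteq\{(0^n,1)\}$ you choose, the branch $0^\omega$ lies in every $\uparrownogap A$ and every $\downarrownogap B$, so $0^\omega\in F_{\mathcal A}\cap I_{\mathcal B}$ and Lemma \ref{lem:fix-pointfree} does not apply. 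There is no way to ``design the convex sets to exclude it'': exclusion from the convex set itself does not remove the branch from its up- or down-closure in $T$. The same observation kills your FPP argument, since the pregap is in fact separated by $\{0^\omega\}$ and is therefore not a gap at all.

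The paper proceeds quite differently. For CFPP it writes down an explicit fixed-point-free map $f:T\to\mathcal C(T)$: $f((s,0))$ is the set of all lower-tree nodes strictly deeper than $s$, $f((s,1))$ is defined dually, and $f(a)=A\setminus\{a\}$ for a branch $a$. The point is that these images are \emph{global} level sets and punctured copies of the whole branch space, not pieces of a single maximal chain, and the antichain $A$ is what makes the map fixed-point-free at the limit level. For FPP (and simultaneously for the non-lattice assertion) the paper uses Rutkowski's criterion (Fact \ref{fact:rutkowski}): it takes the \emph{separable} totally ordered pregap with $A_n=\{(s,0):l(s)\le n\}$ and $B_n=\{(s,1):l(s)\le n\}$, computes that $S(\mathcal A,\mathcal B)$ consists exactly of the topologically dense subsets of the Cantor space $A$, observes this is an infinite antichain in $\mathcal C(T)$ (hence lacks FPP and is not a lattice), and concludes. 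So the mechanism is not an unseparated gap but a separable pregap whose set of separators is pathological; your single-branch idea cannot reproduce this, and you should rework those two parts along these lines.
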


\begin{proof}
The map $S$ is a self dual map which sends $D$ onto $U$, fixes $A$ and
reverses the relation $<$.  The relation $<$ defined on $D\cup A$
yields the binary tree with ends hence yields an ordering and in fact
a meet semilattice. On $A\cup U$, this relation is the dual of $<$,
hence it yields an ordering too which is a join semilattice. To see
that this is an ordering on the union, note that since $D$ is a tree,
$(s, 0)< (s', 1)$ if and only if there is some $t\in A$ such that
$(s,0)<t<(s', 1)$.  Since $T$ is self dual, to prove that this is a
lattice it suffices to prove that this is a join semilattice. Let $x,
y $ be two arbitrary elements of $T$. If $x$ and $y$ are in $A\cup U$
they have a join since $A\cup U$ is a join semilattice. We may suppose
that $x\in D$. If $y$ is comparable to $x$, the join is the largest of
the two. If $y$ is incomparable to $x$ then, as it is easy to see, we
have $x\vee y=x\vee S(y)=S(x)\vee y= S(x)\vee S(y)$. Thus $T$ is a
lattice.  Each maximal chain of $T$ has order type $\omega+1+
\omega^*$ hence is a complete chain. Since $T$ is a lattice, this
ensures that it is complete lattice. The set $D\cup A$ is a well
founded initial segment of $T$ and the set $A\cup U$ is a dually well
founded final segment of $T$; since their union covers $T$, no chain
of type $\theta $ is contained in it.  Let $f: T\rightarrow \mathcal
C(T)$ defined by setting $f(x):= \{(s', 0): l(s')>l(s)\}$ if
$x=(s, 0)$, $f(x):= \{(s', 1): l(s')>l(s)\}$ if $x=(s, 1)$ and $f(x):=
A\setminus \{x\}$ if $x\in A$. This map is order preserving. Since it
does have a fixed point, CFPP fails. 

\medskip

It remains to show that  FPP fails for $\mathcal C (T)$. For
that, we will apply the following result of Rutkowski (\cite{R} Lemma 1):

\begin{fact} \label{fact:rutkowski}
If a poset $P$ contains a totally ordered pregap $( A, B)$ such that
$S( A, B)$ does not have FPP then $P$ does not have FPP.
\end {fact}

To do so, let $\mathcal A:=\{A_n: n\in \N\}$ where $A_n:= \{(s,0)\in D: l(s)\leq
n \}$ and $\mathcal B:=\{B_n: n\in \N\}$ where $B_n:= \{(s,1)\in U:
l(s)\leq n \}$. As it is easy to see, $(\mathcal A, \mathcal B)$ is a
totally ordered pregap. Furthermore, $S(\mathcal A, \mathcal
B)=\{X\subseteq A: X \;\text{is topologically dense in} \; A\}$ (note
that $A$ being the set of branches of the binary tree is homeomorphic
to the Cantor space). Hence, $S(\mathcal A, \mathcal B)$ is an
infinite antichain of $\mathcal C(T)$, thus it does not have
FPP. Fact \ref{fact:rutkowski} now ensures that $\mathcal C (T)$ does
not have FPP. In addition, we get from this construction that
$\mathcal C (T)$ is not a lattice. Indeed, otherwise $S(\mathcal A,
\mathcal B)$ would be a lattice. This is impossible since it is an
infinite antichain.  \end{proof}

\medskip

As a consequence of this last example we obtain the following.

\begin{corollary} \label{cor:failure CFPP}
CFPP is not preserved under finite product. Indeed, CFPP holds for
complete chains, but this property fails for the direct product $[0,
1]\times [0,1]$.
\end{corollary}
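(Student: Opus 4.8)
The statement has two halves --- complete chains have CFPP, and $[0,1]\times[0,1]$ does not --- so I would treat them separately. For the positive half, let $C$ be a complete chain and $f:C\to\mathcal C(C)$ order preserving, and put $\ell(x):=\inf f(x)$ and $u(x):=\sup f(x)$ (these exist by completeness). The key point is that in a \emph{chain} convexity makes membership cheap: if $\ell(x)<x<u(x)$ then $f(x)$ contains an element strictly below $x$ and one strictly above $x$, so $x\in f(x)$ by convexity. Arguing by contradiction, if $f$ has no fixed point then every $x$ satisfies $x\le\ell(x)$ or $u(x)\le x$; writing $L:=\{x:x\le\ell(x)\}$ and $R:=\{x:u(x)\le x\}$ we get $C=L\cup R$. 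Unwinding $f(x)\le f(y)$ shows $\ell$ is order preserving, so by Knaster--Tarski \cite{tarski} its fixed points have a greatest element $c=\max L$ with $\ell(c)=c$.

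The crux is then a one-line squeeze. Since $\inf f(c)=\ell(c)=c$ and $c\notin f(c)$, every element of $f(c)$ exceeds $c$; pick $x_0\in f(c)$ with $x_0>c$. As $c\le x_0$, order preservation gives $f(c)\le f(x_0)$, hence $f(c)\subseteq\downarrownogap f(x_0)$, so $x_0\le w$ for some $w\in f(x_0)$. But $x_0>c=\max L$ forces $x_0\in R$, i.e. $w\le u(x_0)\le x_0$, whence $w=x_0\in f(x_0)$ --- a fixed point, a contradiction. Thus every complete chain, in particular $[0,1]$, has CFPP.

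For the negative half I would route everything through the complete lattice $T$ of Claim \ref{robertexample}, which has no CFPP. The plan is to realize $T$ as an order retract of $[0,1]^2$ and then invoke Corollary \ref{lem:CFPPretraction} (CFPP is inherited by retracts): were $T$ a retract of a poset with CFPP it would itself have CFPP, so $[0,1]^2$ cannot have CFPP. To build the coretraction $\sigma:T\to[0,1]^2$ I would fix an order isomorphism $\kappa$ of $2^{\omega}$ onto the Cantor set $K\subseteq[0,1]$ and send the central antichain $A=2^{\omega}$ onto the anti-diagonal by $a\mapsto(\kappa(a),1-\kappa(a))$; a basic clopen $[s]$ for $s\in 2^{<\omega}$ maps onto $K\cap[l_s,r_s]$, and I would send the lower node $(s,0)$ to the meet $(l_s,1-r_s)$ and the upper node $(s,1)$ to the join $(r_s,1-l_s)$ of that arc. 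One checks that $\sigma$ is a lattice embedding: nested, resp. disjoint, clopens translate into nested, resp. incomparable, corner boxes, reproducing exactly the comparabilities of $T$ across $D$, $U$ and $A$.

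The main obstacle is the retraction $\rho:[0,1]^2\to T$ with $\rho\circ\sigma=1_T$. I would define $\rho(x,y)$ by reading off how $(x,y)$ sits relative to the anti-diagonal and the Cantor data: points with $x+y<1$ are pushed to the lower tree node cut out by their position, points with $x+y>1$ to the dual upper node, and points of the anti-diagonal to their branch in $A$ (or to a bordering node); checking that $\rho$ is order preserving across the three regions and consistent on the anti-diagonal is the technical heart. I would also record, as motivation for this indirect route, the obstruction I expect to any direct construction: the anti-diagonal of $[0,1]^2$ is an infinite antichain, so distinct subsets of it are bi-domination--incomparable and a chain of convex sets cannot ``spread'' along it; consequently the naive attempts to produce a totally ordered gap $(\mathcal A,\mathcal B)$ with $F_{\mathcal A}\cap I_{\mathcal B}=\emptyset$ --- which would let one apply Lemma \ref{lem:fix-pointfree} --- collapse to a \emph{separated} pregap. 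This is exactly why the tree $T$, in which the antichain $A$ is approached by genuine chains inside $D$ and $U$, is the right vehicle.
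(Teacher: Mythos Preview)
Your argument is correct, and its second half follows the same route as the paper: show that the lattice $T$ of Example~\ref{example:robert} is an order retract of $[0,1]^2$ and then invoke Corollary~\ref{lem:CFPPretraction}. Two comparisons are worth making.

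For the claim that complete chains have CFPP, the paper's proof of this corollary actually supplies nothing; it treats the assertion as known and only argues the failure for $[0,1]^2$. Your direct Tarski-style argument (via $\ell(x)=\inf f(x)$, the maximal fixed point $c$ of $\ell$, and the squeeze at an element of $f(c)$ just above $c$) is a clean, self-contained proof of this half. Later in the paper the result could be recovered from Proposition~\ref{prop:CLSPchain} and the analogue of Proposition~\ref{routineproposition}, since for a chain $\mathcal C(C)=\mathcal C_L(C)$, but that machinery only appears in Section~\ref{sublattices}.

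For the second half you are working much harder than necessary on the retraction $\rho$. The paper's proof is one line: $T$ is complete and embeds in $[0,1]^2$, \emph{hence} it is an order retract. This uses the standard fact, invoked repeatedly in the paper (see the proofs of Corollary~\ref{cor:wood1} and of Theorem~\ref{thm:illus}(ii) and (iv)), that every complete lattice which order-embeds in a poset is automatically an order retract of it. So once your embedding $\sigma$ is in place you are done; there is no need to construct $\rho$ by hand, and your ``main obstacle'' disappears. Your explicit description of $\sigma$ via Cantor-interval endpoints is in fact more than the paper provides (it merely asserts embeddability), so that part is a welcome addition rather than a gap.
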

 
\begin{proof}
Let $T$ be the lattice defined defined in Example
\ref{example:robert}. This lattice is an order retract of the direct
product $[0, 1]\times [0, 1]$ because it is complete and  can be embedded into this
direct product. Now, CFPP is preserved under order
retraction (Corollary \ref{lem:CFPPretraction}). Thus, if $[0,
1]\times [0, 1]$ had CFPP, $T$ would have CFPP, which is not the
case. 
\end{proof}

\subsection{Selection properties}

\begin{definition}
A poset $P$ has {\em selection property for convex subsets} (CSP
for short) if there is an order preserving map $s :\mathcal
C(P)\rightarrow P$ such that $s(S)\in S$ for every $S\in \mathcal C
(P)$.
\end{definition}

It is a simple exercise to prove that if $P$ has CSP, then $P$ has FPP
if and only if it has CFPP.

CSP is a strong condition, in fact too strong w.r.t. CFPP. Indeed,
CSP is preserved by convex subsets while CFPP is not.   In addition,
note that every finite poset with a least element has CFPP, while 
we will see that the finite lattices in Corollary \ref{crown} do not have CSP. 

\begin{definition}
Let us recall that a poset $P$ is \emph{bipartite} if its
comparability graph $G(P)$ is bipartite; equivalently $P$ is the union
of the set $\min (P)$ of minimal elements and the set $\max (P)$ of
maximal elements. 
\end{definition}

A \emph{crown} is a poset whose comparability graph is a cycle -- a
crown is bipartite and every vertex has degree two.

\begin{lemma} 
A bipartite poset where every vertex has degree at least two does not have CSP.
\end{lemma}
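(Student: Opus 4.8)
The plan is to prove the contrapositive: assuming $P$ is bipartite with every vertex of degree at least two, I will exhibit an order preserving map $s:\mathcal{C}(P)\rightarrow P$ that fails the selection requirement, by showing any such $s$ leads to a contradiction. Since $P$ is bipartite, $P=\min(P)\cup\max(P)$, so every nonempty convex subset $S$ is itself an antichain contained in $\min(P)$, an antichain in $\max(P)$, or a union $M\cup N$ with $M\subseteq\min(P)$, $N\subseteq\max(P)$; convexity here is automatic since there are no three-element chains. This gives a concrete, combinatorial description of $\mathcal{C}(P)$ that the argument will exploit.

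First I would record how the bi-domination order behaves on these convex sets. For a single edge, i.e.\ a comparable pair $u<v$ with $u\in\min(P)$ and $v\in\max(P)$, the two-element set $\{u,v\}$ satisfies $\{u\}\leq\{u,v\}\leq\{v\}$ and also $\{u,v\}\leq\{v\}$, $\{u\}\leq\{u,v\}$ in the bi-domination order. More importantly, I would track the singletons: $\{u\}\leq\{v\}$ exactly when $u\leq v$ in $P$, so the selection map $s$ restricted to singletons is forced to act like the identity under the isomorphism $x\mapsto\{x\}$. The key constraints come from two-element convex sets that are edges: if $u<v$ is an edge then $s(\{u,v\})\in\{u,v\}$, and the monotonicity $\{u\}\leq\{u,v\}\leq\{v\}$ forces nothing contradictory by itself, so the contradiction must be squeezed out of the cycle structure rather than a single edge.

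The main idea, and the hard part, is to use the cycle. Since every vertex has degree at least two, the comparability graph contains a cycle $v_0<v_1>v_2<v_3>\cdots$ closing up after an even number of steps, alternating between minimal and maximal elements. I would consider the convex sets $\{v_i,v_{i+1}\}$ (each an edge) together with the singletons, and chase the value of $s$ around the cycle. Writing $x_i:=s(\{v_i,v_{i+1}\})\in\{v_i,v_{i+1}\}$, the order-preserving property relative to the singletons and to the larger convex set $\{v_0,v_1,\dots\}$ (or to a suitable antichain containing all the minimal vertices of the cycle) should force a parity or propagation condition: roughly, choosing $s$ to pick the maximal endpoint of one edge forces it to pick the maximal endpoint of the adjacent edge, and the same for minimal endpoints, so the choice must be consistent all the way around. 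But a cycle of even length has no globally consistent alternating choice, yielding the contradiction.

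I expect the delicate point to be setting up exactly which auxiliary convex sets force the propagation; the cleanest route is probably to compare each edge $\{v_i,v_{i+1}\}$ with the antichain $A:=\min(P)\cap\{v_0,v_2,\dots\}$ of minimal cycle-vertices and with the antichain of maximal cycle-vertices, using that $A\leq\{v_i,v_{i+1}\}$ and $\{v_i,v_{i+1}\}\leq \max$-antichain hold in bi-domination. Order preservation of $s$ then pins $s(A)$ and each $x_i$ into a rigid relationship, and the degree-two cycle condition makes the induced system unsolvable. Once the propagation lemma is isolated, the final contradiction is a short parity argument around the closed cycle, and since a crown is exactly a cycle of this type, the corollary about crowns (Corollary~\ref{crown}) follows immediately.
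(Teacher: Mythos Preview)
Your cycle approach has several genuine gaps.

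First, the existence of a cycle is not guaranteed: the lemma covers infinite posets, and a bipartite poset in which every vertex has degree exactly two need not contain any cycle at all (take the doubly infinite fence $\dots a_{-1}<b_{-1}>a_0<b_0>a_1<b_1>\dots$). So the very first step of your argument fails in the generality required.

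Second, even when a cycle exists, the comparisons you propose do not hold. Adjacent edges of the cycle are \emph{incomparable} in the bi-domination order (for $\{u,v\}$ and $\{u',v\}$ with $u,u'\in\min(P)$, $u'\neq u$, the element $u'$ lies above no element of $\{u,v\}$), so there is no direct propagation from one edge to the next. Your fallback, comparing each edge $\{v_i,v_{i+1}\}$ with the antichain $A$ of minimal cycle-vertices, also fails: in the $6$-element crown $a_0<b_0>a_1<b_1>a_2<b_2>a_0$ one has $A=\{a_0,a_1,a_2\}$ but $a_2\not\leq b_0$ and $a_2\neq a_0$, so $A\not\leq\{a_0,b_0\}$. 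Finally, the parity claim is backwards: bipartite graphs contain only even cycles, and an even cycle \emph{does} admit a globally consistent ``always pick the minimal endpoint'' (or ``always pick the maximal endpoint'') choice on its edges, so no contradiction arises there.

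The paper's proof avoids cycles entirely and is much shorter. One lets $a:=s(\min(P))$ and $b_0:=s(\max(P))$; from $\min(P)\leq\max(P)$ one gets $a<b_0$. Degree $\geq 2$ at $a$ gives another $b_1\neq b_0$ with $a<b_1$. Setting $X_i:=(\min(P)\setminus\{a\})\cup\{b_i\}$, one checks (using degree $\geq 2$ at every maximal vertex) that $\min(P)\leq X_i\leq\max(P)$, whence $a\leq s(X_i)\leq b_0$; this forces $s(X_i)=b_i$, and $b_1\leq b_0$ is impossible. The whole argument uses only the two global antichains $\min(P)$, $\max(P)$ and two auxiliary sets, with no finiteness or cycle hypothesis.
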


\begin{proof} 
Let $P$ be such a poset. Suppose that there is some selection
$s:\mathcal C (P)\rightarrow P$. Let $P_0:= \min(P)$, $a:=s(P_0)$,
$P_{1}:= \max (P)$, and $b_0:=s(P_1)$. Due to our assumption on $P$, we
have $P_0\leq P_1$ hence $a<b_0$. Since every vertex of $P$ has degree
at least two, there is some $b_1\in P_1$ with $b_1\not = b_0$ and
$a<b_1$. For $i\in \{0, 1\}$ we set $X_i:=(P_0\setminus \{a\}) \cup
\{b_i\}$. As it is easy to check, we have $P_0\leq X_i\leq P_1$ hence
$a\leq s(X_i)\leq b_0$. Since $s(X_i)=b_i$, this yields a
contradiction for $i=1$. \end{proof}

Since CSP is preserved by convex subsets we immediately have:

\begin{corollary} \label{crown}
The lattices made of a crown with a top and bottom element added do  not have CSP. 
\end{corollary}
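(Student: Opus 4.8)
The plan is to combine the preceding lemma with the observation, stated just above the corollary, that CSP is preserved under passing to convex subsets. Let me unpack both ingredients. The immediately preceding \textbf{Lemma} shows that a bipartite poset in which every vertex has comparability-degree at least two cannot have CSP. A crown, by definition, is a poset whose comparability graph is a cycle, and every vertex of a cycle has degree exactly two; moreover a crown is bipartite. So a crown itself is a bipartite poset with all degrees at least two, and the lemma applies directly to it. The only gap is that the corollary speaks not of a bare crown but of a crown with a top element $1$ and a bottom element $0$ adjoined, and such a poset is no longer bipartite (the added elements are comparable to everything, destroying the two-level structure).

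First I would make precise the preservation principle that bridges this gap. The claim is: if $Q$ is a convex subset of $P$ and $P$ has CSP, then $Q$ has CSP. This is exactly the assertion asserted in the text (``CSP is preserved by convex subsets''), which I am entitled to use. The mechanism is that for a convex $Q \subseteq P$, every convex subset of $Q$ is also a convex subset of $P$, so a selection map $s:\mathcal{C}(P)\rightarrow P$ restricts to a map on $\mathcal{C}(Q)$; since any $S\in\mathcal{C}(Q)$ has $s(S)\in S\subseteq Q$, the restricted map lands in $Q$ and is still order preserving, giving a selection for $Q$. Thus CSP passes from $P$ down to any convex sub-poset $Q$.

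With these two facts in place the proof is a short contrapositive argument. Let $P$ be a crown with top $1$ and bottom $0$ adjoined, and let $C$ denote the underlying crown sitting inside $P$. I would first observe that $C$ is a convex subset of $P$: the only elements of $P\setminus C$ are $0$ and $1$, and since $0$ is below and $1$ is above every element of $C$, no element of $C$ lies strictly between two elements of $C$ via a detour through $0$ or $1$, so convexity of $C$ is immediate. Now suppose for contradiction that $P$ has CSP. By the preservation principle, its convex subset $C$ would also have CSP. But $C$ is a crown, hence a bipartite poset in which every vertex has degree two, so by the preceding Lemma $C$ does not have CSP --- a contradiction. Therefore $P$ does not have CSP, which is exactly the statement of the corollary.

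I do not expect any genuine obstacle here, since both ingredients are already established; the corollary is essentially an immediate consequence. The only point requiring a moment's care is verifying convexity of the embedded crown inside the crown-with-endpoints, and articulating why the degree-at-least-two and bipartiteness hypotheses of the Lemma are satisfied by a bare crown. Both are routine, so the write-up can be kept to a couple of lines, as the text's phrasing (``we immediately have'') signals.
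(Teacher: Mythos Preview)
Your proposal is correct and follows exactly the paper's approach: the paper simply invokes the fact that CSP is preserved by convex subsets together with the preceding lemma, and your write-up unpacks precisely those two ingredients. There is nothing to add.
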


\begin{definition}
We can weaken CSP by simply supposing that for every chain $\mathcal
C$ in $\mathcal C (P)$ there is an order preserving map $s
:\mathcal{C}\rightarrow P$ such that $s(S)\in S$ for every $S\in
\mathcal{C}$. We will call this property CCSP.
\end{definition}

It is easy to prove that every finite poset satisfies CCSP.  There are
infinite posets with CFPP and not CCSP. We give examples below, but
first recall the notion of {\em cofinality} of a chain.

\begin{definition}
The {\em cofinality} of a chain $C$, written  $cf(\mathcal C)$,  is the least ordinal
$\kappa$ such that $C$ contains a cofinal subset with order type
$\kappa$. 
\end{definition}

Note that the image $C'$ of $C$ by an order preserving map
has either a largest element or the same cofinality than $C$.

In our setting, this yields the following:

\begin{fact}\label{unbounded} 
Let $P$ be a poset with CCSP. Let $\mathcal C$ be a chain in $\mathcal
C (P)$ and $A(\mathcal C):=(\cap_{C\in \mathcal C} \uparrownogap
C)\cap (\cup_{C\in \mathcal C} \downarrownogap C)$. If $A(\mathcal
C)=\emptyset$ then $P$ contains a chain of type $cf(\mathcal C)$.
\end{fact}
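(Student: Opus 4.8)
The plan is to argue by contraposition: assume $P$ contains no chain of type $cf(\mathcal C)$ and derive that $A(\mathcal C) \neq \emptyset$. Write $\kappa := cf(\mathcal C)$ and fix a cofinal subchain $\{C_\alpha : \alpha < \kappa\}$ of $\mathcal C$ indexed so that $C_\alpha < C_\beta$ whenever $\alpha < \beta$. The hypothesis CCSP supplies an order preserving map $s : \mathcal C \to P$ with $s(C) \in C$ for every $C \in \mathcal C$; restricting to the cofinal subchain, the points $x_\alpha := s(C_\alpha)$ satisfy $x_\alpha \in C_\alpha$ and $x_\alpha \leq x_\beta$ for $\alpha \leq \beta$. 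So $\{x_\alpha : \alpha < \kappa\}$ is a weakly increasing $\kappa$-indexed family in $P$.

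First I would show that these selected points witness $A(\mathcal C) \neq \emptyset$ once we can extract an honest supremum or a stabilizing value. The key observation is the remark following Definition on cofinality of a chain: the image $C'$ of $C$ under an order preserving map has either a largest element or the same cofinality as $C$. Apply this to $s$ restricted to the cofinal subchain. If the image has a largest element, then the selection stabilizes at some $x_{\alpha_0}$, and I would verify directly that this single point lies in $\bigcap_\alpha \uparrownogap C_\alpha$ (because $x_{\alpha_0} = x_\beta \in C_\beta \subseteq \uparrownogap C_\alpha$ for all $\beta \geq \alpha$, using $C_\alpha \leq C_\beta$) and in $\bigcup_\alpha \downarrownogap C_\alpha$ (trivially, since $x_{\alpha_0} \in C_{\alpha_0}$), hence in $A(\mathcal C)$. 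If instead the image has cofinality $\kappa$, then $\{x_\alpha : \alpha < \kappa\}$ contains a strictly increasing cofinal subfamily, which is a chain of order type $\kappa = cf(\mathcal C)$ inside $P$ — contradicting the assumption. Either branch of the dichotomy yields the conclusion.

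The main obstacle is handling the two intersections $\bigcap_{C \in \mathcal C} \uparrownogap C$ and $\bigcup_{C \in \mathcal C} \downarrownogap C$ carefully, since $A(\mathcal C)$ mixes an intersection of final-segment generators with a union of initial-segment generators, and one must confirm that the stabilizing point (or a suitable limit point) lands in both. The cofinal subchain is harmless for the intersection $\bigcap_{C} \uparrownogap C$ because $C \leq C_\alpha$ cofinally means $\uparrownogap C \supseteq \uparrownogap C_\alpha$ need not hold, so I would instead note that for an arbitrary $C \in \mathcal C$ there is $\alpha$ with $C \leq C_\alpha$, whence any $x_\beta$ with $\beta \geq \alpha$ lies in $\uparrownogap C$ because $x_\beta \in C_\beta \subseteq \uparrownogap C_\alpha \subseteq \uparrownogap C$, using $C \leq C_\alpha \leq C_\beta$ and the bi-domination definition. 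Thus the stabilized value lies in every $\uparrownogap C$, and membership in some $\downarrownogap C$ is immediate from $s(C) \in C$. This completes the contrapositive, giving the fact.
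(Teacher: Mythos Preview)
Your argument is correct and follows essentially the same route as the paper: apply the cofinality dichotomy to the image $\mathcal C' := s[\mathcal C]$ of a CCSP selection map, and note that a largest element of $\mathcal C'$ would belong to $A(\mathcal C)$. The paper argues directly rather than by contraposition and leaves to the reader the verification that such a largest element lies in $A(\mathcal C)$ (which you spell out via $x_\beta \in C_\beta \subseteq \uparrownogap C$ for $\beta$ large), but the content is identical.
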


\begin{proof} 
Let $\mathcal C'$ be the image of $\mathcal C$ by a selection
map. Since $A(\mathcal C)=\emptyset$, $\mathcal C'$ does not have a
largest element, hence $cf(C')=cf(C)$.
\end{proof}

\vspace{.3cm}

Before we produce the example we need the following lemma. 

\begin{lemma}\label{lem:notCCFP} 
Let $Q$ be a well founded poset such that $cf(Q)=cf(\uparrownogap
x)=\omega_1$ for every $x\in Q$. If $Q$ contains no chain of type
$\omega_1$ then the poset $P:=Q+1$ obtained from $Q$ by adding a
largest element has CFPP and not CCSP.
\end{lemma}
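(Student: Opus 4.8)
The plan is to prove the two assertions separately; the first is immediate, and the second is the real content of the lemma. I begin by recording a structural fact used throughout: $P$ has no chain of type $\omega_1$. Since $Q$ is well founded, every chain of $Q$ is well ordered, so a chain of uncountable order type would contain a copy of $\omega_1$, which is excluded by hypothesis; hence every chain of $Q$ is countable. A chain of $P$ of type $\omega_1$ cannot contain $\top$, since a limit ordinal has no greatest element while $\top$ would be greatest, so it would lie in $Q$, contradicting countability. For CFPP itself, note that $P=Q+1$ is again well founded (a nonempty subset of $P$ either equals $\{\top\}$, with least element $\top$, or meets $Q$, where it has a minimal element) and has largest element $\top$; thus Fact \ref{wellfounded} applies directly and $P$ has CFPP.

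For the failure of CCSP I will exhibit a single chain $\mathcal C$ in $\mathcal C(P)$ with $A(\mathcal C)=\emptyset$ and $cf(\mathcal C)=\omega_1$; then Fact \ref{unbounded} would force a chain of type $\omega_1$ in $P$, contradicting the previous paragraph, so no selection can exist. The chain will consist of cofinal final segments of $Q$, each convex in $P$ and not containing $\top$, arranged decreasingly under inclusion, which is the same as increasingly under bi-domination: if $F\supseteq G$ are cofinal final segments of $Q$ then $G$ is cofinal in $F$, so $F\leq G$ by the remark following Fact \ref{fact3.1}. For such a family $\{C_\alpha\}$ one computes, using that each $C_\alpha$ is a cofinal final segment of $Q$, that in $P$ one has $\downarrownogap C_\alpha=Q$ and $\uparrownogap C_\alpha=C_\alpha\cup\{\top\}$; hence $A(\mathcal C)=\big((\bigcap_\alpha C_\alpha)\cup\{\top\}\big)\cap Q=\bigcap_\alpha C_\alpha$. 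So the goal reduces to building an inclusion-decreasing, bi-domination-strictly-increasing family of cofinal final segments of $Q$, of length $\omega_1$, with empty intersection.

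The construction is by recursion on $\alpha<\omega_1$. Fix an enumeration $\{d_\gamma:\gamma<\omega_1\}$ of a cofinal subset of $Q$ (possible since $cf(Q)=\omega_1$) and set $C_\alpha:=Q\setminus\downarrownogap\{x_\delta:\delta<\alpha\}$, where at stage $\gamma$ the point $x_\gamma$ is chosen in $C_\gamma$ with $x_\gamma\geq d_\gamma$. The one technical ingredient, and the only place the hypothesis $cf(\uparrownogap y)=\omega_1$ is used, is the sublemma that for every countable $S\subseteq Q$ the final segment $Q\setminus\downarrownogap S$ is cofinal: if some $\uparrownogap y$ were contained in $\downarrownogap S$, then $\{s\in S:s\geq y\}$ would be a countable cofinal subset of $\uparrownogap y$, contradicting $cf(\uparrownogap y)=\omega_1$. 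The sublemma shows every $C_\alpha$ is a cofinal final segment (its complement is generated by the countably many $x_\delta$, $\delta<\alpha$), so the choice of $x_\gamma\in C_\gamma$ above $d_\gamma$ is always possible. For $\gamma<\beta$ one has $C_\gamma\leq C_\beta$ since $C_\beta$ is cofinal in $C_\gamma$, while $C_\beta\not\leq C_\gamma$ because $x_\gamma\in C_\gamma$ but no element of $C_\beta\subseteq C_{\gamma+1}$ lies below $x_\gamma$ (that cone has been removed); thus $\mathcal C$ is order isomorphic to $\omega_1$ and $cf(\mathcal C)=\omega_1$. Finally, since $x_\delta\geq d_\delta$ and the $d_\delta$ are cofinal, the $x_\delta$ are cofinal, so $\bigcap_{\alpha<\omega_1}C_\alpha=Q\setminus\downarrownogap\{x_\delta:\delta<\omega_1\}=\emptyset$, i.e. $A(\mathcal C)=\emptyset$, completing the argument.

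The main obstacle is precisely this construction: keeping the final segments cofinal throughout the $\omega_1$-recursion while simultaneously driving their intersection to the empty set, the delicate point being the preservation of cofinality at limit stages, where infinitely many cones have been discarded at once. The sublemma isolates exactly where $cf(\uparrownogap y)=\omega_1$ enters, and the absence of a chain of type $\omega_1$ in $Q$ (with well foundedness) is what makes the chain produced by Fact \ref{unbounded} impossible in $P$.
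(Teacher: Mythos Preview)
Your proof is correct and follows essentially the same route as the paper's: both obtain CFPP from Fact~\ref{wellfounded} and refute CCSP by building an $\omega_1$-indexed chain of cofinal final segments $C_\alpha = Q\setminus\downarrownogap\{x_\beta:\beta<\alpha\}$ with empty intersection, then invoking Fact~\ref{unbounded} against the absence of $\omega_1$-chains in $P$. The only difference is that the paper simply takes $(x_\alpha)_{\alpha<\omega_1}$ to be any cofinal sequence in $Q$ and leaves the cofinality $cf(\mathcal C)=\omega_1$ implicit, whereas your recursive choice of $x_\gamma\in C_\gamma$ above a fixed cofinal family $(d_\gamma)$ makes the strict increase (and hence the cofinality) explicit; this is a minor expository refinement, not a different argument.
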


\begin{proof}
Since $P$ is well founded with a largest element, it has CFPP from
Fact \ref{wellfounded}. Next, let $(x_{\alpha})_{\alpha<\omega_1}$ be
a sequence of elements of $Q$ which is cofinal in $Q$. Let
$C_{\alpha}:= Q\setminus \downarrownogap\{ x_{\beta}: \beta <\alpha\}$ for
each $\alpha<\omega_1$. Set $\mathcal C:= \{C_{\alpha}:
\alpha<\omega_1\}$. Let $\alpha\leq \beta<\omega_1$.  The set
$C_{\beta}$ is a final segment of $C_{\alpha }$ and it is cofinal in $Q$.
Indeed, let $x\in Q$, since $cf(\uparrownogap x)= \omega_1$,
$\uparrownogap x\setminus \downarrownogap \{ x_{\gamma}: \gamma
<\beta\}\not= \emptyset$.  Thus $C_{\beta}$ is cofinal in $C_{\alpha}$,
hence, $C_{\alpha}\leq C_{\beta}$ in $\mathcal C (P)$. Consequently,
$\mathcal C$ is a chain. Furthermore $A(\mathcal C)=Q\cap
\bigcap_{\alpha<\omega_1} C_{\alpha }$ hence $A(\mathcal
C)=\emptyset$.  According to Fact \ref{unbounded}, if $P$ had CCSP, it
would contain an uncountable chain.  Hence, CCSP fails.
\end{proof}

\vspace{.3cm}

\begin{definition}
For $\kappa$ be a cardinal, let $[\kappa]^{<\omega}$ be the poset of
finite subsets of $\kappa$ ordered by inclusion.  We further denote by
$[\kappa]^{<\omega}+1$ the complete lattice obtained by simply adding
a largest element to the previous poset.

\end{definition}

\begin{example} 
If a poset $Q$ is up-directed then $cf(Q)=cf(\uparrownogap x)$ for
every $x\in Q$, thus a well founded up-directed poset of cofinality
$\omega_1$ with no chain of type $\omega_1$ will satisfy the
conditions of the Lemma. A natural example is $ [\omega_1]^{<\omega}$.
A second example is $\mathcal I_{<\omega}(Q')$, the set of finitely
generated initial segments of $Q':=\oplus_{\alpha<\omega_1}
L_{\alpha}$, the direct sum of $\aleph_1$ copies of well ordered
chains $L_{\alpha}$ having order type $\alpha$ (the fact that this
poset is well-founded follows from a result of Birkhoff). For an
example of non directed poset, take a regular Aronszajn tree (see
\cite{jech}).\\
\end{example}
 
\begin{remark}
As for CFPP, CSP is not preserved under finite product. Indeed, CSP
holds for chains (see Lemma \ref{prop:CLSPchain}) but this property
fails for the direct product $[0, 1]\times [0,1]$. Otherwise, since
this direct product is a complete lattice, it would have CFPP, which
is not the case according to Lemma \ref{cor:failure CFPP}.\\
\end{remark}

The following result uses a typical example to illustrate the relationship
between fixed  point properties and selection.

\begin{theorem}\label{thm:illus} 
Le $\kappa$ be a cardinal. And let
$P:=[\kappa]^{<\omega}+1$ be the complete lattice defined above.  Then the following hold:\\
[-.2cm]
\begin{enumerate}[{(i)}]
\item  $P$ has  CFPP;\\[-.2cm] 
\item  $P$ has  CSP if and only if $\kappa\leq 2$;\\[-.2cm]
\item  $P$ does not have CCSP if $\kappa$ is uncountable; and,\\[-.2cm]
\item  $\mathcal C (P)$ has FPP if and only if $\kappa<\omega$.\\[-.2cm] 
\end{enumerate} 
\end{theorem}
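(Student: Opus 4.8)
The plan is to prove each of the four parts of Theorem \ref{thm:illus} in turn, exploiting the fact that $P=[\kappa]^{<\omega}+1$ is a well founded complete lattice with a largest element $1$.

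For part (i), the quickest route is to invoke Fact \ref{wellfounded}: the poset $[\kappa]^{<\omega}$ is well founded (ordered by inclusion, finite sets have finite rank), and adjoining a top element keeps it well founded while supplying a largest element. Hence $P$ has CFPP immediately.

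For part (ii), I first verify the two easy positive cases $\kappa\le 2$ by hand, where $P$ is a small finite lattice; for $\kappa\le 2$, $[\kappa]^{<\omega}+1$ is a chain or a tiny lattice that visibly admits a selection. For the converse, when $\kappa\ge 3$ I want to produce a failure of CSP. The natural obstruction is the bipartite-style argument already developed in the Lemma preceding Corollary \ref{crown}: I would exhibit a convex subconfiguration inside $P$ forcing a selection map to make inconsistent choices. Concretely, take the three singletons $\{\alpha\},\{\beta\},\{\gamma\}$ (three distinct minimal nonempty elements) together with the doubletons above them, and argue that any order preserving selection $s:\mathcal C(P)\to P$ choosing a point of each convex set must violate monotonicity on an intermediate convex set, exactly as in the crown argument. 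The key step is identifying the right finite convex sets $X_i$ with $P_0\le X_i\le P_1$ so that the forced values $s(X_i)$ contradict $a\le s(X_i)\le b_0$.

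For part (iii), if $\kappa$ is uncountable I would apply Lemma \ref{lem:notCCFP} together with the Example immediately following it: the poset $Q:=[\omega_1]^{<\omega}$ (embedded in $[\kappa]^{<\omega}$ for uncountable $\kappa$) is up-directed, well founded, of cofinality $\omega_1$, and contains no chain of type $\omega_1$, so $P=Q+1$ fails CCSP by that lemma. For general uncountable $\kappa$ I would note that $[\omega_1]^{<\omega}$ sits inside as a retract-like subconfiguration, or simply rerun the chain construction $C_\alpha:=Q\setminus\downarrownogap\{x_\beta:\beta<\alpha\}$ directly in $[\kappa]^{<\omega}$ using a strictly increasing $\omega_1$-sequence of elements, obtaining $A(\mathcal C)=\emptyset$ and invoking Fact \ref{unbounded}.

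For part (iv), which I expect to be the main obstacle, I must characterize when $\mathcal C(P)$ has FPP. The positive direction, $\kappa<\omega$, is handled by Lemma \ref{lem: wqo+C(P)}: a finite $\kappa$ makes $[\kappa]^{<\omega}$ finite, so $P$ is a finite poset with a largest element, in particular wqo with a top, whence $\mathcal C(P)$ has FPP. The hard direction is $\kappa\ge\omega$ implies $\mathcal C(P)$ fails FPP. Here the strategy is to locate a totally ordered gap in $\mathcal C(P)$ and apply Lemma \ref{lem:fix-pointfree} (or the gap-plus-Rutkowski machinery of Fact \ref{fact:rutkowski}), mirroring the $(\omega,\omega^*)$-gap construction in Claim \ref{lem:nofixedpoint} and Claim \ref{robertexample}. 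Using a countable increasing sequence $s_0\subsetneq s_1\subsetneq\cdots$ of finite subsets of $\kappa$ (available precisely because $\kappa$ is infinite), I would build convex sets $\mathcal A=\{A_n\}$ and $\mathcal B=\{B_n\}$ whose associated $F_{\mathcal A}\cap I_{\mathcal B}$ is empty, so the gap is genuine and totally ordered; the computation of $\downarrownogap$ and $\uparrownogap$ for these sets, and the verification that no separator exists, is the delicate part, and I would organize it through Lemma \ref{lem:pregaps} and Lemma \ref{lem:pregapsbis} exactly as in the earlier examples. The subtlety is ensuring the constructed pregap is totally ordered and that emptiness of the separator set genuinely obstructs FPP rather than merely breaking the lattice property.
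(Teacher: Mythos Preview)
Parts (i) and (iii) match the paper. For (ii) the paper argues via retraction rather than your direct replay of the bipartite lemma: once $\kappa\ge 3$, the $6$-element crown with top and bottom added is a complete lattice embeddable in $P$, hence a retract of $P$; since CSP passes to retracts, Corollary \ref{crown} finishes. Your in-place argument with the sets $X_i$ can be made to work, but the retraction route is shorter and spares you the verification that the $X_i$ are convex in the ambient $P$.

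The genuine gap is in (iv) for $\kappa\ge\omega$. Invoking Lemma \ref{lem:fix-pointfree} is a mistake: that lemma concludes that $P$ fails CFPP, which you have just established in (i). By contraposition, any totally ordered gap you manage to build in $\mathcal C(P)$ must have $F_{\mathcal A}\cap I_{\mathcal B}\ne\emptyset$, contrary to your stated expectation; compare Claim \ref{lem:nofixedpoint}, where the intersection equals $A\ne\emptyset$. What you actually need is only the elementary fact (recalled just before Claim \ref{lem:nofixedpoint}) that a poset containing a totally ordered gap fails FPP. More seriously, your sketch does not say what the convex sets $A_n,B_n\subseteq P$ are, and an increasing chain $s_0\subsetneq s_1\subsetneq\cdots$ alone does not make this evident: convexity, the bidomination comparisons, and nonseparation all have to be verified in $P$, not in some subposet you have in mind.

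The paper sidesteps all of this with a retraction argument. Since every principal ideal of the poset $Q$ of Example \ref{examplenotFPP} is finite, $Q$ embeds in $[\omega]^{<\omega}\subseteq[\kappa]^{<\omega}$, hence $\overline Q$ embeds in $P$; being a complete lattice, $\overline Q$ is then a retract of $P$. By Lemma \ref{lem:C(P)retraction}, $\mathcal C(\overline Q)$ is a retract of $\mathcal C(P)$, and since $\mathcal C(\overline Q)$ fails FPP by Claim \ref{lem:nofixedpoint} and FPP is preserved under retraction, $\mathcal C(P)$ fails FPP. This is both shorter and immune to the issues above.
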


\begin{proof}
(i). $P$ is well founded with a largest element. Apply Fact \ref{wellfounded}. 

\vspace{.3cm}
\noindent
(ii). If $\kappa\leq 2$ a simple inspection proves that CSP
holds. If $\kappa \geq 3$ then $P$ embeds the lattice $L$ made of the
$6$-element crown with top and bottom added. Since every complete
lattice is a retract of any poset in which it can be embedded, $L$ is
a retract of $P$. Since CSP is preserved under retract, if $P$ had
CSP, $L$ would have CSP. According to Corollary \ref{crown}, this is
not the case.

\vspace{.3cm}
\noindent
(iii). If $\kappa$ is uncountable, a surjective map from $\kappa$
onto $\omega_1$ induces a retraction of $P$ onto $[\omega_1]^{<\omega}+1$. Since
CCSP is preserved under retract, if $P$ had CCSP, $[\omega_1]^{<\omega}+1$ would
have CCSP. According to Lemma \ref{lem:notCCFP}, this is not the case.

\vspace{.3cm}
\noindent
(iv). If $\kappa<\omega$, $P$ is finite.  Since it has a largest 
element, $\mathcal C (P)$ has FPP  by Lemma \ref{lem: wqo+C(P)}.  If
$\kappa\geq \omega$, the poset $Q$ defined in Example
\ref{examplenotFPP} is embeddable in $[\kappa]^{<\omega}$ because for
every $x\in Q$, $\downarrownogap x$ is finite. Thus $\overline Q$ is
embeddable in $P$. Since $\overline Q$ is a complete lattice, this is
a retract of $P$. According to Lemma \ref{lem:C(P)retraction},
$\mathcal C (\overline Q)$ is a retract of $\mathcal C (P)$. Since FPP
is preserved under retraction, and $\mathcal C (\overline Q)$ does not have FPP,
$\mathcal C (P)$ does not have FPP. \end{proof}\\

\section{The lattice of convex sublattices of a lattice}\label{sublattices}

Let $T$ be a lattice. The join and meet of two elements $x,y\in T$
are, as usual, denoted respectively by $x\vee y$ and $x\wedge y$ and a
sublattice is a nonempty subset closed under these operations.  The
set $\mathcal {I}d (T)$ of ideals of $T$, ordered by inclusion, is a
lattice (a complete lattice provided that $T$ has a least element),
the join, and meet, of two ideals $A$ and $B$ being $$A\vee B \ = \
\downarrownogap\{a \vee b: a\in A, b\in B\} \ \text{and} \ A\wedge B =
A\cap B \ .$$

\noindent
Similarly, $\mathcal {F}i(P)$, the set of filters of $T$ is a lattice
(a complete lattice provided that $T$ has a largest element): for
every $A, B\in \mathcal {F}i (T)$ $$A\vee B \ = \ \uparrownogap\{a
\wedge b: a\in A, b\in B\} \ \text{and} \ A\wedge B= A\cap B. $$

It is it immediate that an ideal of a lattice is a nonempty initial
segment closed under pairwise joins, and a filter of a lattice is a
final segment closed under pairwise meets.  Hence, the up and down
directed convex subsets of $T$ are simply the convex sublattices of
$T$.  

We denote by $\mathcal C_L(T)$ the set of nonempty convex sublattices
of $T$, ordered with the bi-domination preorder.

\begin{proposition} Let  $T$ be a lattice. Then: 
\begin{enumerate} [{(a)}]

\item  $\mathcal{C}_{ L}(T)$ is a lattice and the map  
$\vartheta: \mathcal{C}_{ L}(T)\rightarrow \mathcal {I}d(T) \times
\mathcal {F}i(T)^*$, defined by $\vartheta(S):= (\downarrownogap S,
\uparrownogap S)$ for $S\in \mathcal{C}_{ L}(T)$, is a one to one
lattice homomorphism. The image is the subset $$\mathcal K(T):= \{(I,
F)\in {I}d(T)\times \mathcal {F}i(T)^*: I\cap F\not =\emptyset\}$$ of
$\mathcal {I}d(T)\times \mathcal {F}i(T)^*$.\\

\item In particular, if  $A, B\in \mathcal C_{L}(T)$,  then:\\[-.2cm]
\begin{enumerate}[{(i)}]

\item $A\leq B\; \text{if and only if} \ a\vee b \in B \ \text
{and} \ a\wedge b \in A \ \text {for every} \ a\in A, b\in B$;
\\[-.2cm]
    
\item $A\vee B=( (\downarrownogap A) \vee (\downarrownogap B)) \cap(( \uparrownogap A ) \cap (\uparrownogap B)) ; \ \text{and}$  \\[-.2cm]
    
\item $A\wedge B=( (\downarrownogap A) \cap (\downarrownogap B))
\cap(( \uparrownogap A ) \wedge(\uparrownogap B)). $
   
\end{enumerate}
 
\end{enumerate}
 
\end{proposition}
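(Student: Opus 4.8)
The plan is to prove the three assertions in part (a) by working through the isomorphism $\vartheta$, and then to read off part (b) as a direct translation of the lattice operations under this isomorphism. The main structural fact I would establish first is that $\mathcal{K}(T)$ is closed under the meet and join of the product $\mathcal{I}d(T)\times\mathcal{F}i(T)^*$ and coincides exactly with the image of $\vartheta$; once this is in hand, the homomorphism claim and the explicit formulas follow with routine verifications.

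First I would verify that $\vartheta$ is well defined and injective. Since $S$ is a convex sublattice, it is up-directed and down-directed (as noted just before the proposition), so $\downarrownogap S$ is an ideal and $\uparrownogap S$ is a filter, placing $\vartheta(S)$ in $\mathcal{I}d(T)\times\mathcal{F}i(T)^*$. Injectivity comes from the fact that a convex set $S$ is recovered from its generated initial and final segments via $S=(\downarrownogap S)\cap(\uparrownogap S)$, which is the lattice analogue of Fact \ref{fact3.1}. Because $S$ is nonempty, $(\downarrownogap S)\cap(\uparrownogap S)\ne\emptyset$, so $\vartheta(S)\in\mathcal{K}(T)$. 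Conversely, given $(I,F)\in\mathcal{K}(T)$, the set $I\cap F$ is nonempty, convex, and closed under joins and meets (an ideal is closed under joins, a filter under meets, and the intersection of an initial and a final segment is convex), hence $I\cap F\in\mathcal{C}_L(T)$ with $\vartheta(I\cap F)=(I,F)$. This pins down the image as exactly $\mathcal{K}(T)$.

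Next I would check that $\vartheta$ preserves order and the lattice operations, which is where the real content lies. Using the characterization of the bi-domination order, $A\leq B$ translates into $\downarrownogap A\subseteq\downarrownogap B$ and $\uparrownogap B\subseteq\uparrownogap A$, i.e.\ comparability in $\mathcal{I}d(T)\times\mathcal{F}i(T)^*$; combined with the explicit join and meet formulas for ideals and filters recalled just before the proposition, this shows $\vartheta$ is an order embedding. To get that it is a lattice homomorphism I must confirm that the product lattice meet and join of two elements of $\mathcal{K}(T)$ stay in $\mathcal{K}(T)$ and agree with $\vartheta$ of the corresponding convex sublattice operation. The potential obstacle, and the step I expect to require the most care, is verifying that $\mathcal{K}(T)$ is closed under the product join $(\,\downarrownogap A\vee\downarrownogap B,\;\uparrownogap A\cap\uparrownogap B\,)$ and product meet; that is, that the resulting ideal and filter still intersect. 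This is exactly the assertion that $(\downarrownogap A\vee\downarrownogap B)\cap(\uparrownogap A\cap\uparrownogap B)\ne\emptyset$ (and dually), which I would establish by exhibiting a concrete witness: for the join, elements of the form $a\vee b$ with $a\in A$, $b\in B$ lie in both $\downarrownogap A\vee\downarrownogap B$ and in $\uparrownogap A\cap\uparrownogap B$, and the convex sublattice they generate is the required join.

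Finally, part (b) follows immediately. Statement (i) is the unwinding of $A\leq B$ through $\vartheta$: the condition $\downarrownogap A\subseteq\downarrownogap B$ is equivalent to $a\wedge b\in A$ for all $a,b$, and $\uparrownogap B\subseteq\uparrownogap A$ to $a\vee b\in B$, using that $A$ and $B$ are sublattices. Statements (ii) and (iii) are simply $\vartheta^{-1}$ applied to the product-lattice formulas for join and meet, recombining the ideal and filter coordinates via $S=(\downarrownogap S)\cap(\uparrownogap S)$. I would present these as short direct calculations, emphasizing that the only nontrivial input is the nonemptiness established in the closure argument above.
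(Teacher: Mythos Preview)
Your proposal is correct and follows essentially the same route as the paper: establish that $\vartheta$ is an order embedding (via Fact~\ref{fact3.1}), identify the image as $\mathcal K(T)$, show $\mathcal K(T)$ is a sublattice of $\mathcal{I}d(T)\times\mathcal{F}i(T)^*$ by exhibiting $a\vee b$ (and dually $a\wedge b$) as a witness to nonempty intersection, and then read off (b). The one step you state without argument---that $\vartheta(I\cap F)=(I,F)$, i.e.\ $\downarrownogap(I\cap F)=I$ and $\uparrownogap(I\cap F)=F$---is exactly the point the paper pauses on, and it uses the up-directedness of $I$ (for $x\in I$ and any $y\in I\cap F$, $x\vee y\in I\cap F$), so be sure to include that line when you write it out.
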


\begin{proof}
{\it (a).}  From Fact \ref {fact3.1} the map $\vartheta$ is an order
isomorphism from $\mathcal{C}_{ L}(T)$ on its image. Thus, to prove
that $\mathcal{C}_{ L}(T)$ is a lattice, it suffices to prove that its
image is a lattice. This image is clearly included in $\mathcal
K(T)$. The reverse inclusion is due to the fact that if $(I, F)\in
\mathcal K(T)$ then $\downarrownogap(I \cap F)=I$ and
$\uparrownogap(I\cap F)=F$.  In fact, the first equality holds
whenever $I$ is an ideal and $F$ is a final segment which meets $I$.
With the product order, $\mathcal {I}d(T)\times \mathcal {F}i(T)^*$ is
a lattice in which $(I,F)\vee(I',F')=(I\vee I', F\cap F')$ for every
$(I,F), (I',F')\in \mathcal {I}d(T)\times \mathcal {F}i(T)^*$.  If
$(I,F), (I',F')\in \mathcal K(T)$ then $(I,F)\vee(I',F')\in \mathcal
K(T)$ since $$\{x\vee x': x\in I\cap F, x'\in I'\cap F'\} \subseteq
(I\vee I')\cap (F\cap F').$$ The same property holds with meet instead
of join. Hence $\mathcal K(T)$ is a sublattice of the lattice
$\mathcal {I}d(T)\times \mathcal {F}i(T)^*$.

\noindent
{\it (b).}  This follows from {\it (a)} and the form of joins and
meets in $\mathcal {I}d(T)\times \mathcal {F}i(T)^*$. \end{proof}

\begin{lemma}\label{lem:filling}
Let $(\mathcal A,  \mathcal B)$ be a pregap of $\mathcal{C}_{ L}(T)$. Then the following properties are equivalent:
\begin{enumerate}[{(a)}]
\item $(\mathcal A, \mathcal B)$ is a gap of $\mathcal{C}_{ L}(T)$;
\item  $I_{\mathcal B}\cap F_{\mathcal A} = \emptyset$;
\item  $(\mathcal A, \mathcal B)$ is a gap of $\mathcal{C}(T)$.
\end{enumerate}

\end{lemma}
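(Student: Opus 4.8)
The plan is to prove the cycle $(a)\Rightarrow (b)\Rightarrow (c)\Rightarrow (a)$, using throughout the observation that the bi-domination relation between two convex subsets, and hence the notion of separator, is intrinsic: it refers only to the sets $\downarrownogap$ and $\uparrownogap$ and does not depend on whether we regard the sets as elements of $\mathcal C(T)$ or of its subposet $\mathcal{C}_{L}(T)$. Thus a pregap of $\mathcal{C}_{L}(T)$ is automatically a pregap of $\mathcal C(T)$, and since $\mathcal{C}_{L}(T)\subseteq \mathcal C(T)$ every separator lying in $\mathcal{C}_{L}(T)$ is also a separator in $\mathcal C(T)$. This containment of separator sets yields $(c)\Rightarrow (a)$ at once: if no convex subset separates, then certainly no convex \emph{sublattice} does. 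For $(b)\Rightarrow (c)$ I would argue contrapositively: if $(\mathcal A,\mathcal B)$ is separable in $\mathcal C(T)$, then item~(\ref{item i}) of Lemma~\ref{lem:pregaps}, applied with $P:=T$, produces a nonempty separator and in particular forces $F_{\mathcal A}\cap I_{\mathcal B}\neq \emptyset$, contradicting $(b)$.

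The heart of the argument — and where the hypothesis that the members of $\mathcal A$ and $\mathcal B$ are convex \emph{sublattices} is essential — is $(a)\Rightarrow (b)$, again proved by contraposition. Assuming $Z:=F_{\mathcal A}\cap I_{\mathcal B}\neq \emptyset$, I will exhibit an element of $\mathcal{C}_{L}(T)$ separating $(\mathcal A,\mathcal B)$, so that $(a)$ fails. The key structural fact is that for a convex sublattice $B$ the initial segment $\downarrownogap B$ is an \emph{ideal}: if $x\leq b_1$ and $y\leq b_2$ with $b_1,b_2\in B$, then $x\vee y\leq b_1\vee b_2\in B$. Intersecting over $\mathcal B$ shows that $I_{\mathcal B}$ is an initial segment closed under $\vee$, and dually $F_{\mathcal A}$ is a final segment closed under $\wedge$. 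It follows immediately that $Z$ is a convex sublattice: it is convex (a final segment meeting an initial segment), and for $x,y\in Z$ we have $x\vee y\in I_{\mathcal B}$ and, being above $x$, also in $F_{\mathcal A}$, while $x\wedge y\in F_{\mathcal A}$ and, being below $x$, also in $I_{\mathcal B}$; hence $x\vee y, x\wedge y\in Z$. I expect this passage to be the main obstacle, since one must verify both that $Z$ genuinely lands in $\mathcal{C}_{L}(T)$ and that it separates, and the latter cannot be imported from Lemma~\ref{lem:pregaps} because that result presupposes separability — precisely what is to be established.

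It remains to check directly that $Z$ separates. From the pregap condition $A\leq B$ we get $A\subseteq \downarrownogap B$ and $B\subseteq \uparrownogap A$ for all $A,B$, so every $a$ belonging to some $A\in\mathcal A$ lies in $I_{\mathcal B}$, and every $b$ belonging to some $B\in\mathcal B$ lies in $F_{\mathcal A}$. Fixing a witness $z_0\in Z$, for $a\in A$ the element $a\vee z_0$ lies in $I_{\mathcal B}$ by join-closure and in $F_{\mathcal A}$ as a point above $z_0$, hence in $Z$, with $a\leq a\vee z_0$; this gives $A\subseteq \downarrownogap Z$, while $Z\subseteq F_{\mathcal A}\subseteq \uparrownogap A$ is trivial, so $A\leq Z$. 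The symmetric computation with $b\wedge z_0$ gives $Z\leq B$. Thus $Z\in \mathcal{C}_{L}(T)$ separates $(\mathcal A,\mathcal B)$, so $(\mathcal A,\mathcal B)$ is not a gap of $\mathcal{C}_{L}(T)$, completing $(a)\Rightarrow (b)$ and closing the cycle.
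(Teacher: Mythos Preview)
Your proof is correct and follows the same overall cycle $(a)\Rightarrow(b)\Rightarrow(c)\Rightarrow(a)$ as the paper, with identical arguments for $(b)\Rightarrow(c)$ and $(c)\Rightarrow(a)$. For the key implication $(a)\Rightarrow(b)$ you reach the same separator $Z=F_{\mathcal A}\cap I_{\mathcal B}$, but you verify directly that $Z$ separates (using a witness $z_0$ and the closure of $I_{\mathcal B}$ under joins and of $F_{\mathcal A}$ under meets), whereas the paper first observes that $I_{\mathcal B}$ is up-directed and $F_{\mathcal A}$ down-directed, then invokes Lemma~\ref{lem:pregapsbis}\,(iv) to conclude that $(\mathcal A,\mathcal B)$ is separable in $\mathcal C(T)$ and only then applies Lemma~\ref{lem:pregaps}\,(i) to identify $Z$ as the separator. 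Your route is more self-contained and avoids the machinery of special pregaps; the paper's route reuses earlier infrastructure but at the cost of a detour.
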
  
\begin{proof} $(b) \Rightarrow (c)$:  This is the contrapositive of  {\it(i)} of Lemma \ref{lem:pregaps}. 

\noindent
$(c) \Rightarrow (a)$: This follows from the fact that $\mathcal{C}_{ L}(T)$ is a subposet of $\mathcal{C}(T)$. 

\noindent
$(a) \Rightarrow (b)$: Suppose that $(b)$ does not hold. Then observe
that $I_{\mathcal B}$ and $F_{\mathcal A}$ are up and down directed,
respectively. Thus Lemma \ref{lem:pregapsbis} applies and from
implication $(c)\Rightarrow (b)$ of $(ii)$ of this Lemma, $(\mathcal
A, \mathcal B)$ is not a gap in $\mathcal{C}(T)$. According to Lemma
\ref{lem:pregaps}{\it (i)}, $I_{\mathcal B}\cap F_{\mathcal A}$
separates $(\mathcal A, \mathcal B)$ in $\mathcal{C}(T)$. But
$I_{\mathcal B}\cap F_{\mathcal A}\in \mathcal{C}_{ L}(T)$, hence it
separates $(\mathcal A, \mathcal B)$ in $\mathcal{C}_{ L}(T)$, hence
$(a)$ does not hold.
\end{proof}

The proof of the following lemma is straightforward. 

\begin{lemma} \label {lem:wood2}
Let $T$ be a lattice, the map $i:T\rightarrow \mathcal{C}_{ L}(T)$
defined by $i(x):=\{x\}$ is a lattice homomorphism, it preserves all
infinite joins and meets in $T$ and all gaps of $T$.
\end{lemma}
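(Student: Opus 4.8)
The plan is to reduce all three assertions to a single clean description of the bi-domination order between a singleton and an arbitrary convex sublattice. The basic observation, immediate from the definition of $\leq$ (which requires $\{x\}\subseteq\,\downarrownogap C$ and $C\subseteq\,\uparrownogap x$), is that for $x\in T$ and $C\in\mathcal{C}_{ L}(T)$ one has $\{x\}\leq C$ exactly when $x\in L(C)$, i.e.\ $x\leq c$ for every $c\in C$, and dually $C\leq\{x\}$ exactly when $x\in U(C)$; here nonemptiness of $C$ makes the first half of each bi-domination condition automatic once the second half holds. With this recorded, each claim becomes a short verification.

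First I would check that $i$ is a lattice homomorphism. The above specializes to $\{x\}\leq\{y\}\Leftrightarrow x\leq y$, so $i$ is an order embedding; for binary joins and meets I would substitute $A=\{x\}$, $B=\{y\}$ into the formulas for $A\vee B$ and $A\wedge B$ supplied by the preceding Proposition. Using $\downarrownogap x\,\vee\,\downarrownogap y=\,\downarrownogap(x\vee y)$ together with $\uparrownogap x\cap\uparrownogap y=\,\uparrownogap(x\vee y)$, and the dual identities for the meet, those formulas collapse to $\{x\}\vee\{y\}=\,\downarrownogap(x\vee y)\cap\uparrownogap(x\vee y)=\{x\vee y\}$ and $\{x\}\wedge\{y\}=\{x\wedge y\}$, so $i$ preserves $\vee$ and $\wedge$.

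Next, for infinite joins, suppose $x=\bigvee_j x_j$ exists in $T$. Since $x_j\leq x$, the singleton $\{x\}$ is an upper bound of the family $(\{x_j\})_j$. Conversely, if $C\in\mathcal{C}_{ L}(T)$ satisfies $\{x_j\}\leq C$ for all $j$, then by the observation each $x_j\in L(C)$, so every $c\in C$ is an upper bound of $(x_j)_j$ and hence $c\geq x$; thus $x\in L(C)$ and $\{x\}\leq C$. Therefore $\{x\}$ is the least upper bound, i.e.\ $\bigvee_j i(x_j)=i(x)$, and the statement for infinite meets follows by duality. Finally, for gaps, let $(A,B)$ be a gap of $T$. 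Then $(i[A],i[B])$ is a pregap because $i$ is order preserving, and any separator $C$ of it would satisfy $\{a\}\leq C$ for all $a\in A$ and $C\leq\{b\}$ for all $b\in B$, forcing every $c\in C$ to lie in $U(A)\cap L(B)$; as $C$ is nonempty this contradicts $U(A)\cap L(B)=\emptyset$, so $(i[A],i[B])$ is again a gap.

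There is no genuine obstacle here: the whole argument rests on the opening characterization of $\{x\}\leq C$ and $C\leq\{x\}$, after which the three parts are routine bookkeeping with the bi-domination order and the join/meet formulas of the preceding Proposition. The only point deserving a moment of care is the gap step, where one must notice that a single nonempty separator already forces $U(A)\cap L(B)$ to be nonempty.
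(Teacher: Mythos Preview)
Your proof is correct and is exactly the kind of routine verification the paper has in mind when it declares the lemma ``straightforward'' without giving details. Your opening characterization of $\{x\}\leq C$ and $C\leq\{x\}$ via $L(C)$ and $U(C)$ is the right organizing observation, and the three parts (binary operations via the preceding Proposition, infinite joins/meets, and gaps) all follow cleanly from it just as you describe.
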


Now since a complete lattice has no gap, we deduce immediately that:

\begin{corollary}\label{cor:wood2}
If $\mathcal{C}_{ L}(T)$ is a complete lattice then $T$ is a complete
lattice.
\end{corollary}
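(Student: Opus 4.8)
The plan is to use the lattice homomorphism $i:T\rightarrow \mathcal{C}_{L}(T)$ from Lemma \ref{lem:wood2} together with the already-proven fact that $i$ preserves all gaps of $T$. The strategy exploits the characterization of completeness in terms of gaps recorded in the Preliminaries: a lattice is complete if and only if every totally ordered pregap is separated (equivalently, it has no gap), and in particular a complete lattice has no gap at all. Since the corollary asks us to conclude completeness of $T$ from completeness of $\mathcal{C}_{L}(T)$, the natural move is contrapositive: assume $T$ is \emph{not} complete and produce a gap in $\mathcal{C}_{L}(T)$, contradicting its completeness.

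Here is how I would carry this out. First, suppose $T$ is not a complete lattice. By the gap characterization of completeness quoted earlier (via \cite{D-R}), $T$ then contains a gap $(A,B)$, i.e.\ a pregap with $S(A,B)=U(A)\cap L(B)=\emptyset$; one may even take it totally ordered, though that refinement is not needed here. Next, I would push this gap forward through $i$. Applying $i$ elementwise gives the pregap $(i[A], i[B])$ in $\mathcal{C}_{L}(T)$ whose members are the singletons $\{a\}$ for $a\in A$ and $\{b\}$ for $b\in B$. The decisive point is that Lemma \ref{lem:wood2} asserts precisely that $i$ \emph{preserves all gaps of} $T$, so the image pregap is again a gap in $\mathcal{C}_{L}(T)$. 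A gap is a witness to incompleteness, so $\mathcal{C}_{L}(T)$ cannot be complete, contradicting the hypothesis. Hence $T$ must be complete.

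The only subtlety worth checking is exactly what ``preserves gaps'' buys us and that the order-theoretic bookkeeping matches up under the bi-domination ordering. Concretely, one should verify that $(i[A],i[B])$ is genuinely a pregap of $\mathcal{C}_{L}(T)$ (i.e.\ every $\{a\}$ lies below every $\{b\}$ in the bi-domination preorder), which is immediate from $a\le b$ for $a\in A$, $b\in B$, and that no convex sublattice $S$ separates it. A separating convex sublattice $S$ would satisfy $\{a\}\le S\le \{b\}$ for all $a\in A$, $b\in B$; unwinding the definition of $\le$ on $\mathcal{C}_{L}(T)$, this forces every $a\in A$ to lie below some element of $S$ and every $b\in B$ to lie above some element of $S$, so any point of $S$ would supply an element of $U(A)\cap L(B)$, contradicting $S(A,B)=\emptyset$. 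This is exactly the content packaged into the phrase ``preserves all gaps'' in Lemma \ref{lem:wood2}, so I expect no real obstacle—the proof is a one-line deduction once the preservation lemma is invoked, which is why the paper can legitimately call it ``immediate.'' The main thing to get right is simply to state the contrapositive cleanly and cite the gap characterization of completeness so that ``no gap'' and ``complete'' are interchangeable.
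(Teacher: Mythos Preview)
Your proof is correct and follows exactly the route the paper intends: invoke Lemma~\ref{lem:wood2} to transport a gap of $T$ to a gap of $\mathcal{C}_{L}(T)$, and use the gap characterization of completeness from the Preliminaries. The paper compresses this to a single sentence (``since a complete lattice has no gap, we deduce immediately\ldots''), and your contrapositive unpacking together with the explicit verification that a separator $S$ of $(i[A],i[B])$ yields an element of $U(A)\cap L(B)$ is precisely the content of ``$i$ preserves all gaps.''
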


\begin{lemma}\label{lem:embeddingtest}
Let $T$ be a complete lattice. There is an order embedding from
$\powerset(\omega)$ into $T$ if and only if there are sequences
$(x_n)_{n \in \omega}$ and $(y_n)_{n \in \omega }$ in $T$ such that:

\begin{enumerate}[{(i)}]
\item  $x_0>x_1> x_2>  \dots x_n> \dots  $;\\[-.2cm]
\item  $y_0\not \leq x_0, \ y_1\not \leq x_1\vee y_0, \ y_2\not \leq x_2\vee y_0\vee y_1, \  \dots \ , \ y_n\not\leq x_n\vee \bigvee_{j<n}y_j, \dots.$;\\[-.2cm]
\item $y_1\leq x_0, \ y_2\leq x_1, \ \dots \ , \ y_{n+1}\leq x_n, \dots$.
\end{enumerate}
\end{lemma}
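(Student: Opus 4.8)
The plan is to prove both directions of the equivalence, with the forward direction (existence of an embedding yields the sequences) being the routine part and the reverse direction (the sequences yield an embedding) being where the real content lies.

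\medskip

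For the forward direction, suppose $e:\powerset(\omega)\rightarrow T$ is an order embedding. The idea is to extract the two sequences by looking at how $e$ treats final segments and singletons of $\omega$. First I would set $x_n := e(\{m : m\geq n\})$, the image of the tail final segment $[n,\omega)$. Since these final segments form a strictly decreasing chain under containment and $e$ is an embedding, we immediately get condition $(i)$, namely $x_0 > x_1 > \cdots$. Next I would set $y_n := e(\{n\})$, the image of the singleton $\{n\}$. Condition $(iii)$ asks that $y_{n+1}\leq x_n$, which translates back to $\{n+1\}\subseteq [n,\omega)$ — true, so this holds by monotonicity of $e$. The subtle point is condition $(ii)$. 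Here I would use that $x_n\vee\bigvee_{j<n}y_j$ is at most $e$ evaluated on $[n,\omega)\cup\{0,\dots,n-1\}=\omega\setminus\{n\}$, whereas $y_n=e(\{n\})$; since $\{n\}\not\subseteq\omega\setminus\{n\}$ and $e$ reflects order, we cannot have $y_n\leq e(\omega\setminus\{n\})$, giving $y_n\not\leq x_n\vee\bigvee_{j<n}y_j$. The one technical care needed is that joins in $T$ need not be carried to unions by $e$; but since $e$ is order preserving, $e(A)\vee e(B)\leq e(A\cup B)$ always holds, which is exactly the inequality direction I need.

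\medskip

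For the reverse direction, suppose we are given sequences $(x_n)$ and $(y_n)$ satisfying $(i)$–$(iii)$. The goal is to build an embedding of $\powerset(\omega)$ into $T$. The natural candidate is to define, for $S\subseteq\omega$, an element by combining the $y_j$ for $j\in S$ with a suitable ``tail cutoff'' from the $x_n$. Because $\powerset(\omega)$ is a complete Boolean algebra and $T$ is complete, I would aim to define a map using complete joins and meets, for instance something like $e(S):=\bigvee_{j\in S}y_{j}$ adjusted by the $x_n$'s to record the ``infinite tail'' behavior of $S$. The role of the strictly decreasing chain $(x_n)$ is to encode, via meets, the asymptotic size of $S$, while the $y_j$ encode the individual elements; conditions $(ii)$ and $(iii)$ are precisely the independence and bounding relations that make the resulting assignment injective and order-reflecting.

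\medskip

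The hard part will be verifying that the constructed map genuinely reflects order: that $e(S)\leq e(S')$ forces $S\subseteq S'$. This is where conditions $(ii)$ and $(iii)$ must be used in full. Condition $(iii)$ guarantees the $y_j$'s sit below the tail elements so the construction is coherent and order preserving in the easy direction, while condition $(ii)$ is an independence-type requirement ensuring that no $y_n$ is absorbed by the join of $x_n$ with the earlier $y_j$'s — this is what prevents distinct subsets from collapsing to the same image. I expect the main obstacle to be choosing the exact form of $e$ so that both the join structure and the meet structure of $\powerset(\omega)$ are simultaneously respected using only the one-sided relations $(i)$–$(iii)$; completeness of $T$ (guaranteed by hypothesis and used via Corollary \ref{cor:wood2}) is essential so that all the infinite joins and meets appearing in the definition actually exist. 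Once the correct $e$ is fixed, the verification that $n\in S\setminus S'$ leads, through $(ii)$, to $e(S)\not\leq e(S')$ should be a direct computation isolating the ``witness'' index $n$.
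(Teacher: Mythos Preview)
Your forward direction has an index error that breaks condition~$(ii)$. With $x_n := e([n,\omega))$ you have $\{n\}\subseteq[n,\omega)$, so $y_n = e(\{n\}) \leq e([n,\omega)) = x_n$, which already contradicts $(ii)$. The set identity you wrote, $[n,\omega)\cup\{0,\dots,n-1\}=\omega\setminus\{n\}$, is false: that union is all of $\omega$, since $n\in[n,\omega)$. The fix is to shift by one and take $x_n := e(\omega\setminus\{0,\dots,n\}) = e([n{+}1,\omega))$, as the paper does; then $(i)$ and $(iii)$ still hold and your argument for $(ii)$ goes through verbatim.

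For the reverse direction you never commit to a definition of the embedding, and your instinct that one must ``adjust by the $x_n$'s'' is a red herring. The paper simply takes $f(X):=\bigvee_{n\in X}y_n$ (with $f(\emptyset)$ the bottom element of $T$); the $x_n$'s enter only in the verification of order-reflection, not in the definition of $f$. The computation you are missing is this: if $n\in X\setminus X'$ then $X'\subseteq\omega\setminus\{n\}$, so
\[
f(X')\ \leq\ f(\omega\setminus\{n\})\ =\ \bigvee_{j<n}y_j\ \vee\ \bigvee_{j>n}y_j,
\]
and condition~$(iii)$ gives $y_j\leq x_n$ for all $j>n$, hence $\bigvee_{j>n}y_j\leq x_n$ and $f(X')\leq x_n\vee\bigvee_{j<n}y_j$. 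Now condition~$(ii)$ says $y_n\not\leq x_n\vee\bigvee_{j<n}y_j$, while $y_n\leq f(X)$, so $f(X)\not\leq f(X')$. Your outline gestures at this but does not produce it, and the crucial role of $(iii)$ in absorbing the entire tail $\bigvee_{j>n}y_j$ into the single element $x_n$ is never identified. (The reference to Corollary~\ref{cor:wood2} is also misplaced: completeness of $T$ is a hypothesis of the lemma.)
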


\begin{proof} 
Suppose that $f$ is an order embedding from $\powerset(\omega)$ into
$T$.  Set $x_n:=f(\omega\setminus \{j: j\leq n\})$ and $y_n:=f(\{n\})$
for each $n\in \omega$.  Then $(i)$ and $(iii)$ are trivially
satisfied. Concerning $(ii)$, note first that since $\{n\}\not
\subseteq \omega \setminus \{n\}$ we have $y_n=f(\{n\})\not \leq
f(\omega\setminus \{n\})$ and next, that since $\omega\setminus \{j:
j\leq n\}\cup \bigcup_{j<n} \{j\} \subseteq \omega \setminus \{n\} , $
we have $$ x_n\vee \bigvee_{j<n}y_j =f(\omega\setminus \{j: j\leq
n\})\vee\bigvee_{j<n}f(\{j \}) \leq f(\omega\setminus \{n\}).$$ Thus
$y_n\not\leq x_n\vee \bigvee_{j<n}y_j$ as required.

Conversely, suppose that there are two sequences satisfying $(i) -
(iii)$. Define $f:\powerset(\omega)\rightarrow T$ by setting $f(X):=
\bigvee\{y_n: n\in X\}$ for every $X\subseteq \omega $, with the
convention that $f(\emptyset)$ is the least element of $T$. Clearly,
this map is order preserving. To show that it is an order embedding,
it suffices to prove that if $X\not \subseteq X'$ then $f(X)\not \leq
f(X')$. Suppose that $n\in X\setminus X'$. Then $X'\subseteq \omega
\setminus \{n\}$. Since $f$ is order preserving, it follows that
$f(X') \leq f(\omega \setminus \{n\})$. By definition of $f$, we
have 
\[ f(\omega\setminus \{n\})= \bigvee_{j> n}f(\{j\} )\vee
\bigvee_{j<n} f(\{j\})= \bigvee_{j>n}y_{j}\vee \bigvee_{j<n} y_j\leq
x_n\vee \bigvee_{j<n}y_j\]
since $y_j\leq x_n$ for $j>n$.  Thus,
$f(X')\leq x_n\vee \bigvee_{j<n}y_j$. Since $n\in X$, we have $y_n=
f(\{n\}) \leq f(X)$. Since $y_n\not \leq x_n\vee \bigvee_{j<n}y_j$,
this yields $f(X)\not \leq f(X')$, as required.  \end{proof}

\medskip

We note that the conditions in the preceding lemma were introduced in
\cite{P-R} to study pregaps under lattice homomorphisms.

\begin{lemma}\label{lem:main} 
Let $T$ be a complete lattice. If $\mathcal{C}_{ L}(T)$ is not complete then there is an embedding from $\powerset(\omega)$ into $T$. 
\end{lemma}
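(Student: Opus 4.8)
The plan is to exhibit sequences $(x_n)_{n\in\omega}$ and $(y_n)_{n\in\omega}$ in $T$ satisfying conditions $(i)$--$(iii)$ of Lemma \ref{lem:embeddingtest}; the desired embedding of $\powerset(\omega)$ then follows at once from that lemma. Everything will be extracted from a single gap of $\mathcal{C}_{L}(T)$. First I would produce the gap and put it in usable form. Since $\mathcal{C}_{L}(T)$ is a lattice that is assumed not to be complete, and since a lattice is complete exactly when every totally ordered pregap is separated, $\mathcal{C}_{L}(T)$ contains a totally ordered gap $(\mathcal A,\mathcal B)$. By Lemma \ref{lem:filling} this is the same as $I_{\mathcal B}\cap F_{\mathcal A}=\emptyset$. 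Write $I:=I_{\mathcal B}$ and $F:=F_{\mathcal A}$; as recorded in the proof of Lemma \ref{lem:filling}, $I$ is an ideal (up-directed) and $F$ a filter (down-directed), and by completeness of $T$ both are nonempty, containing the bottom and top of $T$ respectively. I would then replace $(\mathcal A,\mathcal B)$ by its associated special pregap via Lemma \ref{lem:pregapsbis}, so that each member of $\mathcal A'$ is cofinal in $I$ and each member of $\mathcal B'$ is coinitial in $F$. The one order fact I will use repeatedly is immediate from disjointness together with $I$ being an initial segment and $F$ a final segment: \emph{no element of $F$ lies below any element of $I$} (otherwise such an element would land in $I\cap F$).

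Next I would use completeness of $T$ to pin down the gap. Put $s:=\bigvee I$ and $t:=\bigwedge F$. The crucial leverage is that the gap forces $s\not\le t$. Indeed, were $s\le t$, the singleton convex sublattice $\{s\}$ would satisfy $A'\le\{s\}\le B'$ for all $A'\in\mathcal A'$, $B'\in\mathcal B'$: every member of $\mathcal A'$ is contained in $\downarrownogap s$ (its elements are $\le s=\bigvee I$) and $s$ dominates it, while every member of $\mathcal B'$ is contained in $\uparrownogap s$ (its elements are $\ge t\ge s$). Thus $\{s\}$ would separate $(\mathcal A',\mathcal B')$, contradicting the fact that it is a gap (Lemma \ref{lem:pregapsbis}$(iv)$). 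From $s=\bigvee I\not\le t=\bigwedge F$ I would extract, by the definitions of supremum and infimum, a pair $u_0\in I$, $w_0\in F$ with $u_0\not\le w_0$ (and automatically $w_0\not\le u_0$): an incomparable pair straddling the gap, which serves as the seed of the construction.

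Finally I would build the two sequences by recursion, taking the $x_n$ to be a strictly decreasing sequence drawn from $F$ and the $y_n$ drawn from $I$. With this choice $(iii)$ reads ``some element of $I$ lies below some element of $F$'' and is easy to maintain, and $(i)$ is the strict descent inside $F$. The heart is $(ii)$: having chosen $y_0,\dots,y_{n-1}\in I$ and $x_{n-1}\in F$, I form the current ceiling $x_n\vee\bigvee_{j<n}y_j$, which again lies in $F$, and I must choose $y_n\in I$ with $y_n\le x_{n-1}$ escaping this ceiling. The mechanism is that the finite approximation does not separate $(\mathcal A',\mathcal B')$, so the elements of $I$ lying below $x_{n-1}$ cannot all be dominated by the ceiling; here the non-separation, the directedness of $I$ and $F$, and the relation $s\not\le t$ combine to yield an escaping $y_n$ together with a strictly smaller $x_n\in F$. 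Once the sequences are produced, Lemma \ref{lem:embeddingtest} delivers the embedding of $\powerset(\omega)$ into $T$.

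The step I expect to be the main obstacle is precisely this recursion: guaranteeing at \emph{every} stage that a fresh $y_n\in I$ escapes the current ceiling while keeping $(x_n)$ strictly decreasing in $F$ and respecting $(iii)$. Equivalently, the difficulty is to distill a countable ``independent'' sequence in the sense of $(ii)$ out of a gap whose cofinality may be uncountable; completeness of $T$ (used to form the suprema, infima and ceilings) and the impossibility of a separator are exactly the two ingredients that must be balanced to carry the induction through.
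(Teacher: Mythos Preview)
Your outline matches the paper's: start from a gap of $\mathcal C_L(T)$, pass to a special pregap so that $I:=I_{\mathcal B}$ and $F:=F_{\mathcal A}$ are an ideal and a filter with $I\cap F=\emptyset$, and then build sequences satisfying Lemma~\ref{lem:embeddingtest}. Your observation that $\bigvee I\not\le\bigwedge F$ is exactly the $n=0$ instance of the engine the paper uses throughout, and your plan to draw $y_n$ from $I$ and $x_n$ from $F$ is a legitimate variant of the paper's choice of $y_n\in A_0$, $x_n\in B_0$.

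The gap is precisely where you flagged it: your description of the recursion is circular --- you speak of ``the current ceiling $x_n\vee\bigvee_{j<n}y_j$'' before $x_n$ is chosen, and then propose to pick $y_n$ escaping it. Choosing $x_n$ first and then looking for an escaping $y_n$ will not work for an arbitrary $x_n<x_{n-1}$ in $F$; the two must be chosen together. What replaces your vague appeal to ``the finite approximation does not separate'' is the paper's Claim~\ref{claim:consequencenewgap}: \emph{whenever $Y$ meets every member of $\mathcal A'$ and $X$ meets every member of $\mathcal B'$, one has $\bigvee Y\not\le\bigwedge X$} (otherwise any element in between lies in $I\cap F$). At stage $n>0$ apply this to
\[
Y_n:=I\cap\downarrownogap x_{n-1},\qquad
X_n:=\Bigl\{\,z\vee\!\!\bigvee_{k<n}y_k:\ z\in F\cap\downarrownogap x_{n-1}\Bigr\}.
\]
That $Y_n$ meets each $A'$ uses $x_{n-1}\in F=\bigcap\uparrownogap A'$; that $X_n$ meets each $B'$ uses that $B'$ is coinitial in $F$ (Lemma~\ref{lem:pregapsbis}(iii)) to find $z\in B'$ with $z\le x_{n-1}$, and then convexity of $B'$ together with $\bigvee_{k<n}y_k\in I\subseteq\downarrownogap B'$ to place $z\vee\bigvee_{k<n}y_k$ in $B'$. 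The claim now yields $y_n\in Y_n$ and some $z\in F\cap\downarrownogap x_{n-1}$ with $y_n\not\le z\vee\bigvee_{k<n}y_k$; set $x_n:=z$. Then $y_n\le x_{n-1}$ gives (iii), $y_n\not\le x_n\vee\bigvee_{k<n}y_k$ gives (ii), and $x_n\le x_{n-1}$ with $x_n\ne x_{n-1}$ (since $y_n\le x_{n-1}$) gives (i). Your single use of $s\not\le t$ is the right idea but only the first step; the recursion needs the full claim.
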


\begin{proof} Suppose that  $\mathcal{C}_{ L}(T)$ is not complete. We begin by the following claim. 

\begin {claim} \label {claim:specialgap} 
$\mathcal{C}_{ L}(T)$ contains a special gap $(\mathcal A,\mathcal B)$
where $\mathcal A$ and $\mathcal B$ are, respectively, up-directed
with a least element $A_0$ and down-directed with a largest element
$B_0$. 
\end{claim}

\noindent
{\bf Proof of Claim \ref{claim:specialgap}.} First $\mathcal{C}_{
L}(T)$ contains a gap, not necessarily special, with these
properties. Indeed, since $\mathcal{C}_{ L}(T)$ is not complete, it
contains a gap, say $(\mathcal A,\mathcal B)$. Since $C_{L}(T)$ has a
least element and a largest element (namely $\{0_T\}$ and $\{1_T\}$,
where $0_T$ and $1_T$ are the least and largest elements of $T$),
$\mathcal A$ and $\mathcal B$ are both nonempty. Since $\mathcal{C}_{
L}(T)$ is a lattice, we may suppose that $\mathcal A$ is up-directed
and $\mathcal B$ down-directed (otherwise, replace $\mathcal A$ by
$L(\mathcal B)$ and $\mathcal B$ by $U(L(\mathcal B))$) and also that
$\mathcal A$ and $\mathcal B$ have, respectively, a least element
$A_0$ and a largest element $B_0$.  The pair $(\mathcal A_{\mathcal
B}, \mathcal B_{\mathcal A})$ is a special gap with the required
properties. Indeed, according to Lemma \ref{lem:filling}, $(\mathcal
A,\mathcal B)$ is a gap in $C (T)$. Furthermore $I_{\mathcal B}$ and
$F_{\mathcal A}$ are, respectively, up- and down-directed. Hence, by
Lemma \ref{lem:pregapsbis}, $(\mathcal A_{\mathcal B}, \mathcal
B_{\mathcal A})$ is a gap in $C (T)$, and thus in $\mathcal{C}_{
L}(T)$, and $\mathcal A_{\mathcal B}$ and $\mathcal B_{\mathcal A}$
are, respectively, up-directed with a least element and down-directed
with a largest element. \endproof \\

\begin{claim} \label{claim:consequencenewgap} 
If $Y$ and $X$ are two subsets of $T$ such that $Y\cap A$ and
$X\cap B$ are nonempty for all $A \in \mathcal A$ and
$B  \in \mathcal B$ then $\bigvee Y\not\leq\bigwedge X$.
\end{claim}

\noindent
{\bf Proof of Claim \ref{claim:consequencenewgap}.}  Suppose that the
conclusion does not hold. Let $d$ satisfy $\bigvee Y\leq d\leq
\bigwedge X$. Since $d\leq \bigwedge X$, $d\in \bigcap _{B \in
\mathcal B}\downarrownogap B=I_{\mathcal B}$ Similarly, since $\bigvee
Y\leq d$, $d\in \bigcap _{A \in \mathcal A}\uparrownogap A=F_{\mathcal
A}$. Hence $d\in I_{\mathcal B}\cap F_{\mathcal A}$. This is
impossible as $I_{\mathcal B}\cap F_{\mathcal A} =
\emptyset$.
\endproof

Now the following claim provides the constructions of the desired elements. 

\begin{claim} \label{claim:sequenceelements}
There are $(x_n : n \in \omega ) \subseteq B_{0}$ and and $(y_n : n
\in \omega ) \subseteq A_{0}$ satisfying conditions $(i) - (iii)$ of
Lemma \ref{lem:embeddingtest}.
\end{claim}

\noindent
{\bf Proof of Claim \ref{claim:sequenceelements}.}  Let $n\in \omega$
and suppose $x_k$ and $y_k$ have been defined for all $k<n$. Define $x_n$ and
$y_n$ as follows.  If $n=0$, set $Y_0:= A_{0}$ and $X_0:= B_{0}$. The
hypotheses of Claim \ref{claim:consequencenewgap} are satisfied, hence
there are $y_0\in Y_{0}$ and $x_0\in X_{0}$ such that $y_0\not \leq
x_0$. If $n>0$, set
\begin{align*}
Y_n &:= \ \downarrownogap x_{n-1}\cap A_{0}, \ Z_n := \ \downarrownogap  x_{n-1}\cap B_{0}, \ \text{and} \\
X_n &:= \ Z_n\vee  \bigvee_{k<n} y_{k} ( = \{x\vee \bigvee_{k<n} y_{k}: x\in Z_n\}).
\end{align*}
The sets $X_{n}$ and $Y_{n}$ satisfy the hypotheses of Claim
\ref{claim:consequencenewgap}.  Indeed, let $A\in \mathcal A$ and
$B\in \mathcal B$. Since $A\leq B_{0}$ and $x_{n-1}\in B_{0}$, there
is some $t\in A$ with $t\leq x_{n-1}$. Since $A\subseteq A_0$, $t\in
Y_n\cap A$ proving that $Y_n\cap A \ne \emptyset$.  Since $B\leq
B_{0}$ and $x_{n-1}\in B_{0}$ there is some $x\in B$ such that $x\leq
x_{n-1}$, and since $B \subseteq B_0$, $x\in B_0$ and, thus, $x\in Z_n$.
Also, since $A_{0}\leq B$ and $\bigvee_{k<n} y_{k}\in A_{0}$, $x\vee
\bigvee_{k<n} y_{k}\in B$, hence $X_n\cap B\not = \emptyset$.  From
Claim \ref{claim:consequencenewgap} there are $y_n\in Y_{n}$ and
$x_n\in X_{n}$ such that $y_n\not \leq x_n\vee \bigvee_{k<n} y_{k}$.
With the fact that necessarily $x_n<x_{n-1}$, all conditions of Lemma
\ref{lem:embeddingtest} are satisfied. \endproof \\

Finally,  according to Lemma \ref{lem:embeddingtest},  there is an embedding from
$\powerset (\omega)$ into $T$ and this completes the proof. 
\end{proof}

\begin{lemma} \label {lem:wood1}
Let $P$ and $Q$ be lattices. If $Q$ is an order retract of $P$ or a
lattice quotient of $P$ then $\mathcal{C}_{ L}(Q)$ is an order retract
of $\mathcal{C}_{ L}(P)$.
\end{lemma}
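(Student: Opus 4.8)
The plan is to treat the two hypotheses separately, since the retract case reduces to a result already proved while the quotient case needs a different pair of maps.

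First suppose $Q$ is an order retract of $P$, via order preserving maps $s\colon Q\to P$ and $r\colon P\to Q$ with $r\circ s=1_Q$. Here I would simply reuse the maps of Lemma \ref{lem:C(P)retraction}, namely $\overline s(Y):=Conv_P(s[Y])$ and $\overline r(X):=Conv_Q(r[X])$, which already realise $\mathcal C(Q)$ as an order retract of $\mathcal C(P)$. The one new point to verify is that they send convex \emph{sublattices} to convex sublattices. For this I would use that an order preserving map carries an up-directed (resp. down-directed) set to an up-directed (resp. down-directed) set, and that the convex envelope of a set that is both up- and down-directed is again both up- and down-directed. Since a convex subset of a lattice is a sublattice precisely when it is up- and down-directed, it follows that $\overline s$ maps $\mathcal C_L(Q)$ into $\mathcal C_L(P)$ and $\overline r$ maps $\mathcal C_L(P)$ into $\mathcal C_L(Q)$; being already order preserving and satisfying $\overline r\circ\overline s=1$ on $\mathcal C(Q)$, their restrictions exhibit $\mathcal C_L(Q)$ as an order retract of $\mathcal C_L(P)$.

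Now suppose $Q$ is a lattice quotient of $P$, given by a surjective lattice homomorphism $h\colon P\to Q$. Since $h$ need not admit an order preserving section, I would instead put $\overline r(X):=Conv_Q(h[X])$ and let $\overline s(Y)$ be the full preimage $h^{-1}(Y)$. The first step is to check these land where they should: $h[X]$ is a sublattice of $Q$ (a homomorphic image of the sublattice $X$), so $\overline r(X)$ is a convex sublattice by the envelope argument above; and $h^{-1}(Y)$ is nonempty (as $h$ is onto), is a sublattice (preimage of a sublattice under a homomorphism), and is convex (preimage of a convex set under a monotone map), hence lies in $\mathcal C_L(P)$. The retraction identity is then immediate, $\overline r(\overline s(Y))=Conv_Q(h[h^{-1}(Y)])=Conv_Q(Y)=Y$, using surjectivity of $h$ and convexity of $Y$.

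The step I expect to be the real obstacle is the order preservation of $\overline s$, which genuinely needs $h$ to be a homomorphism rather than merely monotone. For this I would invoke the description of the bi-domination order on convex sublattices: $A\leq B$ iff $a\vee b\in B$ and $a\wedge b\in A$ for all $a\in A$ and $b\in B$. Assuming $Y\leq Y'$ and taking $a\in h^{-1}(Y)$, $b\in h^{-1}(Y')$, the homomorphism identities $h(a\vee b)=h(a)\vee h(b)$ and $h(a\wedge b)=h(a)\wedge h(b)$ give $h(a\vee b)\in Y'$ and $h(a\wedge b)\in Y$, i.e. $a\vee b\in h^{-1}(Y')$ and $a\wedge b\in h^{-1}(Y)$, so $h^{-1}(Y)\leq h^{-1}(Y')$. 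Order preservation of $\overline r$ is the same verification already used for the $\mathcal C$-level maps in Lemma \ref{lem:C(P)retraction}, now applied to the monotone map $h$, and this finishes both cases.
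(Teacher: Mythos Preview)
Your proof is correct and follows essentially the same approach as the paper: the same pair $(\overline s,\overline r)$ from Lemma~\ref{lem:C(P)retraction} in the retract case, and $\overline s(Y)=h^{-1}(Y)$, $\overline r(X)=Conv_Q(h[X])$ in the quotient case. You actually supply more detail than the paper does, in particular the verification that $\overline s$ is order preserving via the criterion $A\leq B\Leftrightarrow a\vee b\in B,\ a\wedge b\in A$, which the paper simply asserts.
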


\begin{proof}
Suppose that $Q$ is an order retract of $P$. Let $s: Q \rightarrow P$
and $r:P\rightarrow Q$ be two order preserving maps such that $r\circ
s =1_{Q}$ and let $\overline s: \mathcal C (Q) \rightarrow \mathcal C
(P)$ and $\overline r:\mathcal C (P)\rightarrow \mathcal C (Q)$ be
given by Lemma \ref {lem:C(P)retraction}.  These two maps send
$\mathcal C_L (Q)$ into $\mathcal C_L (P)$ and $\mathcal C_L (P)$ onto
$\mathcal C_L (Q)$.  Thus, we have coretraction and retraction maps
and $\mathcal{C}_{ L}(Q)$ is an order retract of $\mathcal{C}_{
L}(P)$.

Suppose that $Q$ is a quotient of $P$. Let $r: P\rightarrow Q$ be a
surjective lattice homomorphism.  Let $\overline s:
\mathcal{C}_L(Q)\rightarrow \mathcal{C}_{ L}(P)$ and $\overline r:
\mathcal{C}_{ L}( P)\rightarrow \mathcal{C}_{ L}(Q)$ be defined by
setting $\overline s(Y):=r^{-1}(Y)$ and $\overline r(X):=Conv_Q(r[X])$
for all $Y\in \mathcal{C}_{ L}(Q)$ and $X\in \mathcal C_L (P)$. These
maps are order preserving.  Moreover, if $Y\in \mathcal{C}_{ L}(Q)$
then, since $r[r^{-1}(Y)]=Y$, $\overline r\circ \overline s(Y)=Y$.
Hence, $\overline s$ and $\overline r$ are coretraction and retraction
maps; in particular $\mathcal{C}_{ L}(Q)$ is an order retract of
$\mathcal{C}_{ L}(P)$.
\end{proof}

Since a retract of a complete lattice is complete, Lemma
\ref{lem:wood1} yields immediately:
 
\begin{corollary}\label{cor:CL_{*}(T)retract+quotient} 
If $Q$ is a lattice quotient of a retract of $P$ and $\mathcal{C}_{
L}(P)$ is complete, then $\mathcal{C}_{ L}(Q)$ is complete too.
\end{corollary}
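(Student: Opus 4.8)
The statement is an immediate consequence of Lemma~\ref{lem:wood1}, applied twice, together with the standard fact that an order retract of a complete lattice is again complete. The plan is as follows. The hypothesis that $Q$ is a lattice quotient of a retract of $P$ is witnessed by a lattice $R$ which is an order retract of $P$ and of which $Q$ is a lattice quotient, so that we have a chain $P \to R \to Q$; note that $R$ is itself a lattice, which is exactly what is needed in order to speak of $\mathcal{C}_{L}(R)$ and of a lattice quotient of $R$.

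First I would apply Lemma~\ref{lem:wood1} to the order retraction of $P$ onto $R$: it gives that $\mathcal{C}_{L}(R)$ is an order retract of $\mathcal{C}_{L}(P)$. Since $\mathcal{C}_{L}(P)$ is complete by hypothesis, and a retract of a complete lattice is complete, $\mathcal{C}_{L}(R)$ is complete. Next I would apply Lemma~\ref{lem:wood1} once more, this time to the surjective lattice homomorphism $R \to Q$, obtaining that $\mathcal{C}_{L}(Q)$ is an order retract of $\mathcal{C}_{L}(R)$. Invoking completeness-preservation under order retraction a second time yields that $\mathcal{C}_{L}(Q)$ is complete, as desired.

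There is essentially no obstacle here: the entire content is carried by Lemma~\ref{lem:wood1}, and the argument is just a two-step composition of retractions. The only point worth recording explicitly is the completeness-preservation fact itself. If $r\circ s = 1_{M}$ with $s,r$ order preserving and the ambient lattice $L$ complete, then for every $S \subseteq M$ the element $r\bigl(\bigvee_{L} s[S]\bigr)$ is the supremum of $S$ in $M$: it dominates each member of $S$ because $s$ is order preserving, and it lies below any upper bound $u$ of $S$ since $\bigvee_{L} s[S] \leq s(u)$ forces $r\bigl(\bigvee_{L} s[S]\bigr) \leq r(s(u)) = u$; dually for infima. This is the fact applied in both completeness transfers above, and it is precisely what lets us push completeness of $\mathcal{C}_{L}(P)$ down to $\mathcal{C}_{L}(R)$ and then to $\mathcal{C}_{L}(Q)$.
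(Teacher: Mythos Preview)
Your proof is correct and is essentially the same as the paper's: the paper simply notes that a retract of a complete lattice is complete and invokes Lemma~\ref{lem:wood1}, which is exactly your two-step argument. Your additional verification of the completeness-preservation fact is a nice explicit touch, but not a departure in approach.
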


Another consequence is this:

\begin{corollary} \label{cor:wood1} 
If $Q$ is a lattice quotient of a complete lattice $P$ and $P$ is
order embeddable in a lattice $T$ then $\mathcal{C}_{ L}(Q)$ is a
retract of $\mathcal{C}_{ L}(T)$.
\end{corollary}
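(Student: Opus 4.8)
The plan is to combine the two cases of Lemma \ref{lem:wood1} with the standard fact that a complete lattice is an \emph{absolute retract}, together with transitivity of retractions. The first step is to promote the embedding $P\hookrightarrow T$ to a retraction. Since $P$ is a complete lattice and is order embeddable in $T$, $P$ is an order retract of $T$; this is exactly the fact already invoked in the proof of Theorem \ref{thm:illus}(ii), namely that every complete lattice is a retract of any poset into which it can be embedded. Concretely, if $s:P\rightarrow T$ denotes the embedding, one obtains an order preserving retraction $r:T\rightarrow P$ by setting $r(x):=\bigvee\{p\in P: s(p)\leq x\}$ (the join computed in $P$, which exists by completeness), and a routine check gives $r\circ s=1_{P}$.

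With $P$ exhibited as an order retract of $T$, I would apply Lemma \ref{lem:wood1} in its retract form to conclude that $\mathcal{C}_{L}(P)$ is an order retract of $\mathcal{C}_{L}(T)$, say via a coretraction $\overline s$ and a retraction $\overline r$ with $\overline r\circ\overline s=1_{\mathcal{C}_{L}(P)}$. Independently, since $Q$ is a lattice quotient of $P$, the quotient case of Lemma \ref{lem:wood1} furnishes maps $\overline s'$ and $\overline r'$ witnessing that $\mathcal{C}_{L}(Q)$ is an order retract of $\mathcal{C}_{L}(P)$, with $\overline r'\circ\overline s'=1_{\mathcal{C}_{L}(Q)}$.

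Finally I would compose these two retractions. The maps $\overline s\circ\overline s':\mathcal{C}_{L}(Q)\rightarrow\mathcal{C}_{L}(T)$ and $\overline r'\circ\overline r:\mathcal{C}_{L}(T)\rightarrow\mathcal{C}_{L}(Q)$ are order preserving as composites of order preserving maps, and $(\overline r'\circ\overline r)\circ(\overline s\circ\overline s')=\overline r'\circ(\overline r\circ\overline s)\circ\overline s'=\overline r'\circ\overline s'=1_{\mathcal{C}_{L}(Q)}$, so $\mathcal{C}_{L}(Q)$ is a retract of $\mathcal{C}_{L}(T)$, as required. There is no genuinely hard step here: the argument is entirely a bookkeeping assembly of results already in hand, and the only ingredient not quoted verbatim from the excerpt is the absolute-retract property of complete lattices, which is a well-known fact and is already used earlier in the paper.
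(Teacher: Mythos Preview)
Your proof is correct and follows essentially the same approach as the paper: first use the absolute-retract property of complete lattices to realize $P$ as a retract of $T$, then apply Lemma~\ref{lem:wood1} twice (once for the retract $P$ of $T$, once for the quotient $Q$ of $P$) and compose. The paper's proof is terser but identical in structure; your explicit formula for $r$ and the spelled-out composition are welcome elaborations but add nothing new.
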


\begin{proof}  
Since every complete lattice which is embeddable in a lattice is an
order retract of that lattice, $P$ is an order retract of $T$. From
Lemma \ref{lem:wood1}, $\mathcal{C}_{ L}(P)$ is a retract of
$\mathcal{C}_{ L}(T)$ and $\mathcal{C}_{ L}(Q)$ is a retract of
$\mathcal{C}_{ L}(P)$.  Thus $\mathcal{C}_{ L}(Q)$ is a retract of
$\mathcal{C}_{ L}(T)$ as claimed. \end{proof}\\

For any set $E$,  let $\powerset(E)$ denote the set of all subsets of $E$ ordered by containment 
and let $\powerset(E)/Fin$ be the quotient of $\powerset(E)$ by the ideal $Fin$ of finite subsets of 
$E$. Define $p:\powerset(E)\rightarrow \powerset(E)/Fin$ to be the canonical projection. For $X,
Y \in \powerset(E)$, we set $X\leq_{Fin} Y$ if $X\setminus Y\in Fin$.
This defines a quasi-order on $\powerset(E)$, its image under $p$ is the
order on $\powerset(E)/Fin$.

With all these tools in hand, we are ready to provide the proof of Main Theorem~\ref{thm:main}.

\subsection{Proof of Main Theorem \ref{thm:main}}

$(i) \Rightarrow (ii):$ Let $Q$ be a quotient of a retract of $T$.
According to Corollary \ref{cor:wood1}, $\mathcal{C}_{ L}(Q)$ is a
retract of $\mathcal{C}_{ L}(T)$. Since $\mathcal{C}_{ L}(T)$ is
complete, $\mathcal{C}_{ L}(Q)$ is complete too. According to
Corollary \ref{cor:wood2}, $Q$ complete.

\medskip

\noindent $(ii) \Rightarrow (iii):$ Since $T$ is a quotient and a retract of itself, it is complete. Let $P:=\powerset(\omega)$ and $Q:= \powerset(\omega)/Fin$. Clearly, the lattice $Q$ is not complete. Thus $P$ cannot be a retract of $T$. \\[-.2cm]

\medskip 

\noindent $(iii) \Rightarrow (i):$ Apply Corollary \ref{cor:wood2} and Lemma \ref{lem:main}. \endproof

\medskip

\subsection{Selection property, fixed point property and completeness of the lattice of convex sublattices}

\begin{proposition}\label{lem:CLFPPretraction}  
CLFPP is preserved under retraction and lattice quotient.
\end{proposition}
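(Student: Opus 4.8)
The plan is to follow the template already established for the analogous statements about RFPP (Lemma~\ref{lem:retractRFPP}), CFPP (Corollary~\ref{lem:CFPPretraction}), and the retract part of Lemma~\ref{lem:wood1}. Both halves of the proposition assert that a certain map transports a fixed-point-free situation downstairs, so I would prove the contrapositive by lifting an order preserving map $g:Q\rightarrow \mathcal{C}_{L}(Q)$ to an order preserving map on $P$, applying CLFPP for $P$, and pushing the resulting fixed point back down to $Q$.

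For the retraction case, suppose $P$ has CLFPP and $Q$ is an order retract of $P$ via $s:Q\rightarrow P$ and $r:P\rightarrow Q$ with $r\circ s=1_Q$. Let $\overline{s}:\mathcal{C}_{L}(Q)\rightarrow\mathcal{C}_{L}(P)$ and $\overline{r}:\mathcal{C}_{L}(P)\rightarrow\mathcal{C}_{L}(Q)$ be the maps from Lemma~\ref{lem:wood1}, which satisfy $\overline{r}\circ\overline{s}=1_{\mathcal{C}_{L}(Q)}$. Given an order preserving $g:Q\rightarrow\mathcal{C}_{L}(Q)$, define $h:P\rightarrow\mathcal{C}_{L}(P)$ by $h:=\overline{s}\circ g\circ r$; this is order preserving, so by CLFPP it has a fixed point $x\in h(x)=\overline{s}(g(r(x)))=Conv_P(s[g(r(x))])$. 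I would then mimic the computation in Corollary~\ref{lem:CFPPretraction}: set $y:=r(x)$ and show $y\in g(y)$. The point $x$ lies in the convex envelope of $s[g(r(x))]$, so there are $u,v\in g(r(x))$ with $s(u)\le x\le s(v)$ (after noting $s(u),s(v)\in s[g(r(x))]$); applying $r$ and using $r\circ s=1_Q$ and order preservation gives $u\le r(x)=y\le v$ with $u,v\in g(y)$, and since $g(y)$ is convex this yields $y\in g(y)$.

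For the quotient case, let $r:P\rightarrow Q$ be a surjective lattice homomorphism and use the maps $\overline{s}(Y):=r^{-1}(Y)$ and $\overline{r}(X):=Conv_Q(r[X])$ from the second half of Lemma~\ref{lem:wood1}, which again satisfy $\overline{r}\circ\overline{s}=1_{\mathcal{C}_{L}(Q)}$. Again set $h:=\overline{s}\circ g\circ r$, obtain a fixed point $x\in h(x)=r^{-1}(g(r(x)))$, and read off directly that $r(x)\in g(r(x))$; so $y:=r(x)$ is a fixed point of $g$. This case is actually cleaner because $\overline{s}$ is an exact preimage rather than a convex envelope, so no convexity argument is needed to recover membership.

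I expect the only real subtlety to be the membership-recovery step in the retraction case, where one must pass from $x\in Conv_P(s[g(r(x))])$ to $r(x)\in g(r(x))$: one has to be careful that $u,v$ can be chosen \emph{inside} $g(r(x))$ (not merely in its convex envelope), and then invoke convexity of $g(y)$ together with $u\le y\le v$. This is exactly parallel to the argument in Corollary~\ref{lem:CFPPretraction}, and the fact that values of $g$ are convex \emph{sublattices} (hence convex) is precisely what makes it go through. Everything else is routine verification that the lifted maps are order preserving, which is already guaranteed by Lemma~\ref{lem:wood1}.
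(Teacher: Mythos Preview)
Your proposal is correct and follows essentially the same approach as the paper: in both cases you lift $g$ to $h:=\overline{s}\circ g\circ r$ using the retraction/coretraction maps from Lemma~\ref{lem:wood1}, apply CLFPP for $P$, and show $y:=r(x)$ is a fixed point of $g$. The paper is terser---it simply invokes the computation in Corollary~\ref{lem:CFPPretraction} rather than spelling out the convexity argument in the retraction case---but your more explicit treatment is exactly what that reference unpacks to.
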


\begin{proof} 
Let $P$ be a lattice.  Let $Q$ be an order retract of $P$ or a lattice
quotient of $P$. According to Lemma \ref{lem:wood1}, $\mathcal{C}_{
L}(Q)$ is an order retract of $\mathcal{C}_{ L}(P)$. More precisely,
if $Q$ is an order retract, let $s: Q \rightarrow P$ and
$r:P\rightarrow Q$ be two order preserving maps such that $r\circ s
=1_{Q}$ and let $\overline s: \mathcal C(Q)
\rightarrow \mathcal C(P)$ and $\overline r:\mathcal C(P)\rightarrow
\mathcal C(Q)$ be given by Lemma \ref {lem:C(P)retraction}.  These two
maps provide a retraction of $\mathcal{C}_L(P)$ onto $\mathcal{C}_{
L}(Q)$.

If $Q$ is a lattice quotient of $P$ let $r: P\rightarrow Q$ be a
surjective lattice homomorphism. Let $\overline s: \mathcal{C}_{
L}(Q)\rightarrow \mathcal{C}_{ L}(P)$ and $\overline r: \mathcal{C}_{
L}( P)\rightarrow \mathcal{C}_{ L}(Q)$ be defined by setting
$$\overline s(Y):=r^{-1}(Y) \ \text{and} \ \overline
r(X):=Conv_Q(r[X])$$ for all $Y\in \mathcal{C}_{ L}(Q)$ and $X\in
\mathcal{C}_L (P)$. According to Lemma \ref{lem:wood1}, $\overline s$
and $\overline r$ are coretraction and retraction maps. Thus, in both
cases, if $g: Q \rightarrow \mathcal{C}_{ L}(Q)$ is an order
preserving map, we may proceed as in the proof of Corollary
\ref{lem:CFPPretraction}, defining $h: P\rightarrow \mathcal{C}_{
L}(P)$ by $h:= \overline s\circ g\circ r $.  Then, if $x$ is a fixed
point of $h$, $y:=r(x)$ is a fixed point of $g$.  \end{proof}\\
 
\begin{corollary}\label {failureCLFPP} 
The lattice $\powerset (\omega)$ does not have CLFPP.
\end{corollary}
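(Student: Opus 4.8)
The plan is to argue by contradiction, exploiting the fact that CLFPP forces completeness of the lattice of convex sublattices, and then reading off a contradiction from the characterization of that completeness supplied by Main Theorem~\ref{thm:main}.

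Suppose, toward a contradiction, that $\powerset(\omega)$ has CLFPP. Since CLFPP extends the ordinary fixed point property for maps of a lattice into itself, the necessary condition recorded in Proposition~\ref{routineproposition} applies and yields, as the first step, that $\mathcal{C}_{L}(\powerset(\omega))$ is a complete lattice. The second step is to feed this into Main Theorem~\ref{thm:main} taking $T:=\powerset(\omega)$: using only the easy implication $(i)\Rightarrow(iii)$, the completeness of $\mathcal{C}_{L}(\powerset(\omega))$ forces that $\powerset(\omega)$ is not order embeddable in $\powerset(\omega)$. The final step records the evident contradiction, namely that the identity map is an order embedding of $\powerset(\omega)$ into itself. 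Hence the assumption was untenable and CLFPP fails for $\powerset(\omega)$.

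There is no genuine obstacle once the earlier machinery is in place, since the entire content is already packaged in Proposition~\ref{routineproposition} and in the direction $(i)\Rightarrow(iii)$ of Main Theorem~\ref{thm:main}. The only point deserving a moment's care is to verify that it is precisely the implication ``CLFPP $\Rightarrow$ $\mathcal{C}_{L}(T)$ complete'' (rather than merely the weaker ``$T$ complete'') that is furnished by Proposition~\ref{routineproposition}; with that in hand, the self-embedding of $\powerset(\omega)$ makes the argument immediate.
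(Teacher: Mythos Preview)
Your argument is circular as written. In the paper, Corollary~\ref{failureCLFPP} appears \emph{before} Proposition~\ref{routineproposition}, and the proof of the implication $(ii)\Rightarrow(iii)$ in Proposition~\ref{routineproposition} explicitly invokes Corollary~\ref{failureCLFPP}: it argues that if $T$ has CLFPP then $\powerset(\omega)$ cannot be a retract of $T$ \emph{because $\powerset(\omega)$ fails CLFPP}, and then appeals to Main Theorem~\ref{thm:main}. So you are proving Corollary~\ref{failureCLFPP} by citing a result whose proof uses Corollary~\ref{failureCLFPP}.

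The circularity can be broken: Remark~\ref{rem: shorter} supplies an independent proof of $(ii)\Rightarrow(iii)$ via Lemma~\ref{lem:filling} and Lemma~\ref{lem:fix-pointfree} that does not rely on Corollary~\ref{failureCLFPP}. If you route through that remark instead, your argument becomes valid. Once patched in this way, your approach is genuinely different from the paper's. The paper argues directly and elementarily: $\powerset(\omega)/Fin$ is a lattice quotient of $\powerset(\omega)$ which is not complete, hence lacks FPP and a fortiori CLFPP, and CLFPP is preserved under quotients (Proposition~\ref{lem:CLFPPretraction}). Your route instead passes through the heavier machinery of Main Theorem~\ref{thm:main}. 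The paper's proof is shorter, self-contained, and logically prior; yours illustrates that the corollary is already implicit in the equivalence of Main Theorem~\ref{thm:main} once one has an independent proof of $(ii)\Rightarrow(iii)$.
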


\begin{proof}
The lattice $\powerset (\omega)/Fin$ is a lattice quotient of
$\powerset (\omega)$.  Since it is not complete, it does not have FPP
\cite{davis}. In particular, it does not have CLFPP.  According to
Lemma \ref{lem:CLFPPretraction}, $\powerset (\omega)$ cannot have
CLFPP. \end{proof}

\begin{definition}
Recall that a lattice $T$ has the \emph{selection property for convex
sublattices} (CLSP for short) if there is an order preserving map
$\varphi: \mathcal{C}_{ L}(T)\rightarrow T$ such that $\varphi(S)\in
S$ for every $S\in \mathcal{C}_{ L}(T)$.  
\end{definition}

\begin{proposition}\label{routineproposition}
Let us consider the following properties of a lattice 
$T$: 

\begin{enumerate}[{(i)}]
 \item $T$ is complete and has  CLSP;\\[-.3cm]
 \item $T$ has  CLFPP;\\[-.3cm]
 \item $\mathcal{C}_{ L}(T)$ is a complete lattice. \\[-.3cm]
\end{enumerate}

\noindent
Then $(i)\Rightarrow(ii)\Rightarrow (iii)$. 
\end{proposition}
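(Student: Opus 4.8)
The plan is to prove two implications, and I expect both to follow from tools already assembled in the paper.

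\medskip

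\noindent
The plan is to prove the two implications $(i)\Rightarrow(ii)$ and $(ii)\Rightarrow(iii)$ separately.

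\medskip

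\noindent\textbf{For $(i)\Rightarrow(ii)$.} First I would assume $T$ is complete and has CLSP, with selection map $\varphi:\mathcal{C}_{L}(T)\rightarrow T$ satisfying $\varphi(S)\in S$ for all $S$. Given any order preserving map $f:T\rightarrow\mathcal{C}_{L}(T)$, I would form the composite $g:=\varphi\circ f:T\rightarrow T$. Since $f$ is order preserving (into $\mathcal{C}_L(T)$ with the bi-domination order) and $\varphi$ is order preserving, $g$ is an order preserving self-map of $T$. Because $T$ is complete, by the Tarski--Davis theorem cited in the introduction, $g$ has a fixed point $x$, so $x=\varphi(f(x))$. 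But $\varphi(f(x))\in f(x)$ by the selection property, hence $x\in f(x)$, which is precisely a fixed point of $f$ for CLFPP. This direction is routine; the only point to verify carefully is that the orders are compatible so that $g$ is genuinely order preserving.

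\medskip

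\noindent\textbf{For $(ii)\Rightarrow(iii)$.} This is the step I expect to be the main obstacle. I would argue by contraposition: assume $\mathcal{C}_{L}(T)$ is not complete and produce an order preserving map $f:T\rightarrow\mathcal{C}_{L}(T)$ with no fixed point. The natural route is to exploit a gap in $\mathcal{C}_{L}(T)$. By Corollary \ref{cor:wood2}, if $\mathcal{C}_{L}(T)$ is complete then $T$ is complete; conversely, if $T$ is not complete it has a gap, and via the embedding $i:x\mapsto\{x\}$ of Lemma \ref{lem:wood2} (which preserves gaps) this lifts to behaviour in $\mathcal{C}_{L}(T)$. The cleaner strategy, mirroring Lemma \ref{lem:fix-pointfree}, is to first invoke Lemma \ref{lem:filling} to obtain a gap $(\mathcal A,\mathcal B)$ with $I_{\mathcal B}\cap F_{\mathcal A}=\emptyset$, and then, as in the proof of Lemma \ref{lem:main} (Claim \ref{claim:specialgap}), reduce to a \emph{special} gap with $\mathcal A$ up-directed with least element and $\mathcal B$ down-directed with largest element. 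I would then define $f:T\rightarrow\mathcal{C}_{L}(T)$ by sending each $x\in T$ to an appropriate member of $\mathcal A$ or $\mathcal B$ depending on which side of the gap $x$ falls, exactly imitating the fixed-point-free construction of Lemma \ref{lem:fix-pointfree}: if $x\notin I_{\mathcal B}$ send it to the least $B$ it fails to lie below, otherwise (so $x\notin F_{\mathcal A}$) send it to the least $A$ it fails to lie above. Order preservation follows from the chain structure, and the emptiness of $I_{\mathcal B}\cap F_{\mathcal A}$ guarantees no fixed point.

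\medskip

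\noindent
The main obstacle is ensuring the fixed-point-free map genuinely takes values in $\mathcal{C}_{L}(T)$ (convex \emph{sublattices}, not merely convex subsets) and is order preserving in the bi-domination order; here the up/down-directedness of the special gap furnished by Lemma \ref{lem:pregapsbis} is exactly what is needed, since up- and down-directed convex subsets of a lattice are precisely the convex sublattices. I would verify order preservation using the explicit description of comparability in Lemma \ref{lem:pregapsbis}(i)--(iii), and conclude by noting that a fixed point $x\in f(x)$ would place a witness in $I_{\mathcal B}\cap F_{\mathcal A}$, contradicting that it is a gap.
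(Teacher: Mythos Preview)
Your argument for $(i)\Rightarrow(ii)$ is correct and identical to the paper's.

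For $(ii)\Rightarrow(iii)$, your overall strategy is sound and is precisely the ``more direct route'' the paper records in Remark~\ref{rem: shorter}. However, the paper's \emph{primary} proof is different: it argues that if $T$ has CLFPP then, since CLFPP is preserved under retracts and lattice quotients (Proposition~\ref{lem:CLFPPretraction}) and $\powerset(\omega)$ fails CLFPP (Corollary~\ref{failureCLFPP}, via the non-complete quotient $\powerset(\omega)/Fin$), $\powerset(\omega)$ cannot embed in $T$; then Main Theorem~\ref{thm:main} $(iii)\Rightarrow(i)$ gives completeness of $\mathcal{C}_L(T)$. So your approach bypasses the $\powerset(\omega)$ machinery entirely, trading it for the gap-based construction of Lemma~\ref{lem:fix-pointfree}; the paper's route, by contrast, reuses the structural characterization and the preservation lemmas.

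There is one genuine wrinkle in your write-up. You reduce to a \emph{special} gap (via Claim~\ref{claim:specialgap}), which yields $\mathcal A$ up-directed and $\mathcal B$ down-directed, and then say ``order preservation follows from the chain structure'' and ``send it to the least $B$ it fails to lie below''. But a special gap need not be totally ordered, so there is no ``least $B$'' available and Lemma~\ref{lem:fix-pointfree} does not apply as stated. The fix is easy and is what Remark~\ref{rem: shorter} actually does: since $\mathcal{C}_L(T)$ is a lattice, its non-completeness is witnessed directly by a \emph{totally ordered} gap (this characterization is recalled in the Preliminaries), and then Lemma~\ref{lem:filling} plus Lemma~\ref{lem:fix-pointfree} apply verbatim, with values automatically in $\mathcal{C}_L(T)$ since $\mathcal A\cup\mathcal B\subseteq\mathcal{C}_L(T)$. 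The detour through special gaps is unnecessary here.
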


\begin{proof}
$(i)\Rightarrow (ii)$. Let $h: T \rightarrow \mathcal{C}_{ L}(T)$ be
an order preserving map.  Let $\varphi:\mathcal{C}_{ L}(T)\rightarrow
T$ be an order preserving selection map guaranteed by $(i)$.  The map
$\varphi\circ h: T \rightarrow T$ is order preserving. Since $T$ is a
complete lattice, $\varphi \circ h$ has a fixed point \cite{tarski},
say, $x=\varphi \circ h(x)$.  Since $\varphi$ is a selection, $x\in
h(x)$. Hence, $(ii)$ holds.

\medskip

\noindent $(ii)\Rightarrow (iii)$.  Suppose that $(ii)$ holds. According to Lemma   \ref{lem:CLFPPretraction} and 
Corollary \ref{failureCLFPP}, $\powerset (\omega)$ is not a retract of
$T$. Since $\powerset (\omega)$ is a complete lattice, it cannot be
embedded in $T$. Hence, according to implication $(iii)\Rightarrow
(i)$ of Main Theorem \ref{thm:main}, $\mathcal{C}_{ L}(T)$ is a complete
lattice. \end{proof}

\begin{remark} \label{rem: shorter} Here is a more direct route to the implication $(ii)\Rightarrow (iii)$.
As in the proof of Lemma \ref{lem:main}, observe that in the non complete lattice $\mathcal{C}_{ L}(T)$ 
there is a totally ordered gap $(\mathcal A,\mathcal B)$.  According to Lemma \ref{lem:filling},  
$I_{\mathcal B}\cap F_{\mathcal A} = \emptyset$. This gap is then a gap in $\mathcal C(T)$.  
Lemma \ref{lem:fix-pointfree} yields a fixed point free map of $T$ to $\mathcal{C}_{ L}(T)$.\\
\end{remark}

Trivially, we have:

\begin{proposition}
The dual of a lattice with  $CLSP$ has  CLSP.
\end{proposition}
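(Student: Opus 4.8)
The plan is to show that passing to the dual does essentially nothing to the relevant structure, so the very same selection map works. First I would record two identifications at the level of underlying sets and orders. A nonempty subset $A\subseteq T$ is a convex sublattice exactly when it is convex and closed under $\vee$ and $\wedge$; convexity is a symmetric condition (if $x\leq z\leq y$ in $T$ then the reversed inequalities hold in $T^*$, and interchanging the roles of $x$ and $y$ shows the set is convex in $T^*$ as well), and closure under $\vee,\wedge$ is unaffected by the swap $\vee^*=\wedge$, $\wedge^*=\vee$. Hence $\mathcal{C}_{L}(T)$ and $\mathcal{C}_{L}(T^*)$ have \emph{the same} underlying collection of convex sublattices.

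Next I would check how the bi-domination order transports. Recall $\downarrow_{T^*}\!\!=\uparrownogap$ and $\uparrow_{T^*}\!\!=\downarrownogap$. So, by the defining condition \eqref{eq:main1}, for convex sublattices $X,Y$ one has $X\leq Y$ in $\mathcal{C}_{L}(T^*)$ iff $X\subseteq \uparrownogap Y$ and $Y\subseteq \downarrownogap X$, which is precisely $Y\leq X$ in $\mathcal{C}_{L}(T)$. Therefore $\mathcal{C}_{L}(T^*)$ is nothing but $\mathcal{C}_{L}(T)^{*}$, the order-dual of $\mathcal{C}_{L}(T)$.

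Finally I would exploit these identifications. Suppose $T$ has CLSP, witnessed by an order preserving $\varphi:\mathcal{C}_{L}(T)\rightarrow T$ with $\varphi(S)\in S$ for all $S$. Regard the very same set map $\varphi$ as a function $\mathcal{C}_{L}(T^*)\rightarrow T^*$; this is legitimate because the domains agree as sets and $T^*$ has the same ground set as $T$. The selection condition $\varphi(S)\in S$ is purely set-theoretic and is thus unchanged. For order preservation, note that a map is order preserving between two posets if and only if it is order preserving between their duals; since we have just shown that both the source $\mathcal{C}_{L}(T^*)=\mathcal{C}_{L}(T)^{*}$ and the target $T^*$ are the duals of the source and target of $\varphi$, the map $\varphi$ remains order preserving. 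Hence $\varphi$ is a CLSP selection map for $T^*$.

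There is no real obstacle here; the only point requiring care is the second step, namely verifying that the bi-domination order genuinely dualizes to give $\mathcal{C}_{L}(T^*)=\mathcal{C}_{L}(T)^{*}$. Once that dual identification and the set-level coincidence of convex sublattices are in hand, the selection map transports verbatim and the statement follows.
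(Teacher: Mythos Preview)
Your argument is correct and is precisely the routine verification the paper has in mind: the paper omits the proof entirely, simply labeling the statement as trivial, and your write-up spells out exactly the natural duality $\mathcal{C}_{L}(T^*)=\mathcal{C}_{L}(T)^{*}$ together with the observation that the same selection map works after reversing both orders.
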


\begin{proposition} 
Given lattices $L_0$ and $L_1$, let $L:=L_0\times L_1$ be their direct
product and $\pi_i:L\rightarrow L_i$ be the $i$-th projection for
$i<2$. Then the map $\pi: \mathcal{C}_{ L}(L)\rightarrow \mathcal{C}_{
L}(L_0)\times \mathcal{C}_{ L}(L_1)$ defined by $\pi(S):=( \pi_0[S],
\pi_1[S])$ for all $S\in \mathcal{C}_{ L}(L)$ is an isomorphism from
$\mathcal{C}_{ L}(L)$ onto $\mathcal{C}_{ L}(L_0)\times \mathcal{C}_{
L}(L_1)$.
\end{proposition}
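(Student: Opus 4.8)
The plan is to prove the stronger structural fact that every nonempty convex sublattice of the product \emph{is} a product of convex sublattices, and then read the isomorphism off from this decomposition.

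\textbf{Step 1: the factorization lemma.} First I would show that for each $S\in\mathcal{C}_{L}(L)$ the images $\pi_0[S]$ and $\pi_1[S]$ are nonempty convex sublattices of $L_0$ and $L_1$, and that
\[
S=\pi_0[S]\times\pi_1[S].
\]
That $\pi_i[S]$ is a sublattice is immediate, since each $\pi_i$ is a lattice homomorphism and $S$ is a sublattice; nonemptiness is clear. For convexity of, say, $\pi_0[S]$, take $a\le c\le b$ with $a,b\in\pi_0[S]$, choose $(a,p),(b,q)\in S$, and set $u:=(a,p)\wedge(b,q)$ and $v:=(a,p)\vee(b,q)$, both in $S$, with $\pi_0(u)=a$ and $\pi_0(v)=b$ (using $a\le b$); the element $w:=(c,\pi_1(u))$ then satisfies $u\le w\le v$, so $w\in S$ by convexity and $c=\pi_0(w)\in\pi_0[S]$. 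For the factorization, the inclusion $S\subseteq\pi_0[S]\times\pi_1[S]$ is trivial, and for the reverse, given $a\in\pi_0[S]$ and $b\in\pi_1[S]$ I would choose $(a,p),(q,b)\in S$ and observe
\[
(a,p)\wedge(q,b)=(a\wedge q,\,p\wedge b)\le(a,b)\le(a\vee q,\,p\vee b)=(a,p)\vee(q,b),
\]
so $(a,b)\in S$ by convexity. This is the crux, and where I expect the main work to lie: the coordinates decouple precisely because $S$ is simultaneously convex and closed under both lattice operations, so the witnessing meets and joins can be manufactured coordinatewise.

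\textbf{Step 2: bijectivity.} Next I would exhibit the inverse. For $(A_0,A_1)\in\mathcal{C}_{L}(L_0)\times\mathcal{C}_{L}(L_1)$ the set $A_0\times A_1$ is a nonempty convex sublattice of $L$ (a product of sublattices is a sublattice, a product of convex sets is convex in the product order), and clearly $\pi_i[A_0\times A_1]=A_i$. Combined with the factorization of Step~1, this shows that $S\mapsto(\pi_0[S],\pi_1[S])$ and $(A_0,A_1)\mapsto A_0\times A_1$ are mutually inverse, so $\pi$ is a bijection.

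\textbf{Step 3: order isomorphism.} Finally I would check that $\pi$ and its inverse preserve order. Here I would use the criterion for the bi-domination order on convex sublattices, namely that $A\le B$ iff $a\vee b\in B$ and $a\wedge b\in A$ for all $a\in A$ and $b\in B$. Applying this in $L$ to $S=\pi_0[S]\times\pi_1[S]$ and $S'=\pi_0[S']\times\pi_1[S']$ and computing joins and meets coordinatewise, the membership conditions $s\vee s'\in S'$ and $s\wedge s'\in S$ split into a $0$-coordinate condition and a $1$-coordinate condition; as $s$ ranges over $S$ its $i$-th coordinate ranges over exactly $\pi_i[S]$ (and independently in each coordinate, since $S$ is a product), so the condition $S\le S'$ becomes the conjunction of $\pi_0[S]\le\pi_0[S']$ and $\pi_1[S]\le\pi_1[S']$, which is precisely $\pi(S)\le\pi(S')$ in the product order. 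Since $\pi$ is thus an order isomorphism between lattices, it is automatically a lattice isomorphism, completing the proof.
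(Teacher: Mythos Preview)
Your proof is correct and follows the same idea as the paper: the inverse of $\pi$ is the map $(A_0,A_1)\mapsto A_0\times A_1$. The paper's proof is a one-liner---``Let $\pi'(S_0,S_1):=S_0\times S_1$; observe that $\pi'$ is the inverse of $\pi$''---leaving every verification to the reader, whereas you have carried out the factorization $S=\pi_0[S]\times\pi_1[S]$, the convexity of the projections, and the order-preservation in full; these are exactly the details implicit in the paper's ``observe''.
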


\begin{proof} 
Let $\pi':\mathcal{C}_{L}(L_0)\times \mathcal{C}_{ L}(L_1)\rightarrow
\mathcal{C}_{ L}(L)$ be defined by $\pi'(S_0, S_1):= S_0\times
S_1$. Observe that $\pi'$ is the inverse of $\pi$. \end{proof}

\begin{corollary} \label{cor:finiteproduct}
The class of lattices $T$ such that $\mathcal{C}_{ L}(T)$ is complete
is closed under finite product.\\
\end{corollary}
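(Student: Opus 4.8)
The plan is to reduce the statement to the isomorphism established in the preceding Proposition, combined with the elementary fact that a direct product of complete lattices is again complete. First I would dispose of the case of two factors. Suppose $L_0$ and $L_1$ are lattices with $\mathcal{C}_{ L}(L_0)$ and $\mathcal{C}_{ L}(L_1)$ both complete. The preceding Proposition gives an isomorphism $\mathcal{C}_{ L}(L_0\times L_1)\cong \mathcal{C}_{ L}(L_0)\times \mathcal{C}_{ L}(L_1)$. Since completeness is preserved under direct products—an arbitrary family of elements of a product of complete lattices has its supremum and infimum computed coordinatewise—the right-hand side is complete, and therefore so is $\mathcal{C}_{ L}(L_0\times L_1)$.

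For an arbitrary finite product $L_0\times\cdots\times L_{n-1}$ I would argue by induction on $n$. The case $n=1$ is vacuous, and for the inductive step I would write the product as $(L_0\times\cdots\times L_{n-2})\times L_{n-1}$. Applying the inductive hypothesis to the first factor yields that $\mathcal{C}_{ L}(L_0\times\cdots\times L_{n-2})$ is complete, and then the two-factor case just proved applies to conclude that $\mathcal{C}_{ L}(L_0\times\cdots\times L_{n-1})$ is complete.

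There is no genuine obstacle here: the entire content of the corollary is carried by the structural isomorphism of the preceding Proposition, and the only ambient fact needed is the closure of completeness under products, which is immediate from the coordinatewise description of joins and meets. The step most worth stating carefully is simply the verification that the product of two complete lattices is complete, and even this is standard.
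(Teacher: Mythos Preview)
Your proposal is correct and follows exactly the approach the paper intends: the corollary is stated immediately after the Proposition giving the isomorphism $\mathcal{C}_{L}(L_0\times L_1)\cong \mathcal{C}_{L}(L_0)\times \mathcal{C}_{L}(L_1)$, and the paper offers no further argument beyond this. Your induction and the remark on coordinatewise suprema simply spell out what the paper leaves implicit.
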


\begin{proposition} \label{lem:CLSPproduct} 
$CLSP$ is preserved under finite products.  
\end{proposition}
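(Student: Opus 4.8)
The plan is to assemble a selection map for $L=L_0\times L_1$ out of selection maps on the two factors, leaning entirely on the isomorphism $\pi$ just established. Assume $L_0$ and $L_1$ both have CLSP, witnessed by order preserving maps $\varphi_0:\mathcal{C}_{L}(L_0)\rightarrow L_0$ and $\varphi_1:\mathcal{C}_{L}(L_1)\rightarrow L_1$ with $\varphi_i(S_i)\in S_i$ for every $S_i\in\mathcal{C}_{L}(L_i)$. I would define $\varphi:\mathcal{C}_{L}(L)\rightarrow L$ by $\varphi(S):=\big(\varphi_0(\pi_0[S]),\varphi_1(\pi_1[S])\big)$; equivalently $\varphi=(\varphi_0\times\varphi_1)\circ\pi$, where $\varphi_0\times\varphi_1$ denotes the coordinatewise map on $\mathcal{C}_{L}(L_0)\times\mathcal{C}_{L}(L_1)$. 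This $\varphi$ is well defined precisely because $\pi(S)=(\pi_0[S],\pi_1[S])$ lands in $\mathcal{C}_{L}(L_0)\times\mathcal{C}_{L}(L_1)$, so each $\pi_i[S]$ is a legitimate argument for $\varphi_i$.

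Next I would verify that $\varphi$ is order preserving. Since $\pi$ is an order isomorphism (preceding proposition), $S\leq S'$ in $\mathcal{C}_{L}(L)$ gives $\pi_i[S]\leq\pi_i[S']$ in each factor; as each $\varphi_i$ is order preserving, $\varphi_i(\pi_i[S])\leq\varphi_i(\pi_i[S'])$, and hence $\varphi(S)\leq\varphi(S')$ in the product order on $L$. Thus $\varphi$ is order preserving.

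It remains to check the selection condition $\varphi(S)\in S$. By the selection property of the factors, $\varphi_i(\pi_i[S])\in\pi_i[S]$, so $\varphi(S)\in\pi_0[S]\times\pi_1[S]$. The key point is that this product is exactly $S$: since $\pi'$ is the inverse of $\pi$, we have $S=\pi'(\pi(S))=\pi_0[S]\times\pi_1[S]$. Therefore $\varphi(S)\in S$, and $\varphi$ witnesses CLSP for $L$.

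The essential content lies not in the construction but in the identity $S=\pi_0[S]\times\pi_1[S]$ for convex sublattices $S$ of a product: a sublattice of a product is in general \emph{not} the product of its projections, and it is convexity that forces the recovery. This is precisely the statement that $\pi'\circ\pi$ is the identity, already recorded in the preceding proposition, so no genuinely new obstacle arises; the argument is a routine transport of the factorwise selections through the isomorphism $\pi$.
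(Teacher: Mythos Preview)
Your proof is correct and follows essentially the same route as the paper: define $\varphi(S):=(\varphi_0(\pi_0[S]),\varphi_1(\pi_1[S]))$ and use the identity $S=\pi_0[S]\times\pi_1[S]$ (encoded in the preceding proposition as $\pi'\circ\pi=\mathrm{id}$) to get the selection condition. Your additional remark that convexity is what forces a sublattice of a product to equal the product of its projections is a welcome clarification the paper leaves implicit.
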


\begin{proof}
Let $L_0$ and $L_1$ be lattices with CLSP. Let $L:=L_0\times L_1$, let
$\pi_i$ be the $i$-th projection, and let $\varphi_i: \mathcal{C}_{
L}(L_i) \rightarrow L_i$ be an ordered preserving selection map for
$i<2$.  Let $\varphi: \mathcal{C}_{ L}(L)\rightarrow L$ be defined by
$\varphi(S):= (\varphi_0(\pi_0[S]), \varphi_1(\pi_1[S]))$ for $S\in
\mathcal{C}_{ L}(L)$. Since $S=\pi_0[S]\times \pi_1[S]$ for all $S\in
\mathcal{C}_{ L}(L)$, $\varphi$ is an order preserving selection map.
\end{proof}

\begin{proposition} \label{lem:quotient-retract} 
Every quotient of a lattice with $CLSP$ is a retract of that
lattice. \end{proposition}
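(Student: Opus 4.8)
The plan is to take the quotient map itself as the retraction and to build the coretraction by using the CLSP selection map to choose one element from each congruence class. So let $T$ be a lattice with CLSP and let $q\colon T\rightarrow Q$ be a surjective lattice homomorphism realizing $Q$ as a quotient of $T$. I will set $r:=q$ and construct a monotone section $s\colon Q\rightarrow T$ with $q\circ s=1_Q$.

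The first step is to observe that each fiber $q^{-1}(y)$, for $y\in Q$, is a nonempty convex sublattice of $T$, hence an element of $\mathcal{C}_{L}(T)$. It is nonempty by surjectivity of $q$; it is closed under $\vee$ and $\wedge$ because $q(a\vee b)=q(a)\vee q(b)=y$ and $q(a\wedge b)=q(a)\wedge q(b)=y$ whenever $a,b\in q^{-1}(y)$; and it is convex because $a,c\in q^{-1}(y)$ with $a\leq b\leq c$ force $y=q(a)\leq q(b)\leq q(c)=y$, so $q(b)=y$. Thus the assignment $\kappa(y):=q^{-1}(y)$ defines a map $\kappa\colon Q\rightarrow\mathcal{C}_{L}(T)$.

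The key step is to check that $\kappa$ is order preserving for the bi-domination order, and this is the one place where the homomorphism (rather than merely monotone) nature of $q$ is used. Suppose $y_1\leq y_2$ in $Q$. Invoking the characterization in part (b)(i) of the preceding proposition, it suffices to verify that for all $a\in q^{-1}(y_1)$ and $b\in q^{-1}(y_2)$ one has $a\vee b\in q^{-1}(y_2)$ and $a\wedge b\in q^{-1}(y_1)$; this is immediate since $q(a\vee b)=y_1\vee y_2=y_2$ and $q(a\wedge b)=y_1\wedge y_2=y_1$. (Equivalently, working directly from the definition $A\leq B$ iff $A\subseteq\,\downarrownogap B$ and $B\subseteq\,\uparrownogap A$: given $a\in q^{-1}(y_1)$ pick any $c\in q^{-1}(y_2)$ and note $a\leq a\vee c\in q^{-1}(y_2)$; given $b\in q^{-1}(y_2)$ pick any $d\in q^{-1}(y_1)$ and note $q^{-1}(y_1)\ni b\wedge d\leq b$.) Hence $\kappa(y_1)\leq\kappa(y_2)$.

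Finally, I compose $\kappa$ with the selection map $\varphi\colon\mathcal{C}_{L}(T)\rightarrow T$ supplied by CLSP, setting $s:=\varphi\circ\kappa$. As a composite of order preserving maps, $s$ is order preserving, and since $\varphi(S)\in S$ for every $S$, we have $s(y)=\varphi(q^{-1}(y))\in q^{-1}(y)$, so $q(s(y))=y$, that is $r\circ s=q\circ s=1_Q$. Therefore the pair $(r,s)$ exhibits $Q$ as an order retract of $T$. The argument is essentially formal once $\kappa$ is seen to land in $\mathcal{C}_{L}(T)$ and to be monotone, so the only real content—and the main thing to get right—is the third step; everything else is bookkeeping.
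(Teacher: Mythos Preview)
Your proof is correct and follows essentially the same approach as the paper: use the quotient map as the retraction, observe that each fiber $q^{-1}(y)$ is a nonempty convex sublattice, check that $y\mapsto q^{-1}(y)$ is order preserving, and compose with the CLSP selection to obtain the coretraction. The paper's argument is identical in structure, though it omits the verifications you spell out.
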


\begin{proof} 
Let $Q$ be a quotient of a lattice $P$ with CLSP with a surjective
homomorphism $q: P\rightarrow Q$ .  For each $y\in Q$, the set
$q^{-1}(y)$ belongs to $\mathcal{C}_{ L}(P)$.  Moreover, the map
$q^{-1}: Q\rightarrow \mathcal{C}_{ L}(P)$ is order preserving. Let
$s:\mathcal C_{L}(P)\rightarrow P$ be an order preserving
selection. Let $f:=s\circ q^{-1}$. Clearly $q\circ f= 1_{Q}$. Hence,
$q$ is a retraction and $f$ a coretraction. Thus $Q$ is a retract of
$P$.  \end{proof}\\

\begin{proposition} \label{lem:CLSPretraction}
$CLSP$ is preserved under retraction.
\end{proposition}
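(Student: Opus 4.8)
The plan is to transport the selection map along the retraction, exactly in the spirit of the earlier proofs that CFPP and CLFPP are preserved under retraction (Corollary \ref{lem:CFPPretraction} and Proposition \ref{lem:CLFPPretraction}). First I would let $P$ be a lattice with CLSP and $Q$ an order retract of $P$, fixing order preserving maps $s:Q\rightarrow P$ and $r:P\rightarrow Q$ with $r\circ s=1_{Q}$, together with the order preserving selection $\varphi:\mathcal{C}_{ L}(P)\rightarrow P$ guaranteed by CLSP for $P$ (so $\varphi(X)\in X$ for all $X$). By Lemma \ref{lem:wood1}, which routes through Lemma \ref{lem:C(P)retraction}, the map $\overline s:\mathcal{C}_{ L}(Q)\rightarrow \mathcal{C}_{ L}(P)$ given by $\overline s(Y):=Conv_P(s[Y])$ is an order preserving coretraction sending $\mathcal{C}_{ L}(Q)$ into $\mathcal{C}_{ L}(P)$. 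I would then define $\psi:\mathcal{C}_{ L}(Q)\rightarrow Q$ by $\psi:=r\circ \varphi\circ \overline s$ and claim that $\psi$ witnesses CLSP for $Q$.

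That $\psi$ is order preserving is immediate, being a composite of three order preserving maps, so the only substantive point is the selection condition $\psi(Y)\in Y$ for every $Y\in \mathcal{C}_{ L}(Q)$. This is where convexity does the work. Since $\varphi$ is a selection, we have $\varphi(\overline s(Y))\in \overline s(Y)=Conv_P(s[Y])$; recalling that $Conv_P(A)=\{z\in P: x\leq z\leq y\ \text{for some}\ (x,y)\in A^{2}\}$, there are $y_{1},y_{2}\in Y$ with $s(y_{1})\leq \varphi(\overline s(Y))\leq s(y_{2})$. Applying the order preserving map $r$ and using $r\circ s=1_{Q}$ yields $y_{1}\leq \psi(Y)\leq y_{2}$. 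Since $Y$ is a convex sublattice of $Q$ and $y_{1},y_{2}\in Y$, convexity forces $\psi(Y)\in Y$, as required.

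I do not expect any genuine obstacle here, as the argument is the selection analogue of the retraction arguments already established. The one point deserving care, and the reason a naive composition of selections does not suffice, is that the selected element $\varphi(\overline s(Y))$ need not equal $s(y)$ for any $y\in Y$; it is precisely the passage through the convex envelope $Conv_P(s[Y])$, combined with the convexity of $Y$, that guarantees the retraction $r$ returns the selected element into $Y$.
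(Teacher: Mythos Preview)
Your proof is correct and is essentially the paper's own argument. The paper writes $\psi:=\overline r\circ\varphi\circ\overline s$ (tacitly identifying $\overline r$ on a point with $r$) and verifies the selection condition by invoking $\overline r\circ\overline s=1_{\mathcal{C}_{L}(Q)}$ from Lemma \ref{lem:wood1}; you instead unpack that identity by hand via the convexity of $Y$, which is exactly the content of the underlying proof of Lemma \ref{lem:C(P)retraction}.
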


\begin{proof} 
Let $P$ be a lattice with CLSP and let $Q$ be an order retract of
$P$.  Let $s: Q\rightarrow P$ and $r: P\rightarrow Q$ be order
preserving maps such that $r\circ s =1_{Q}$. Let $\overline s:
\mathcal{C}_L (Q)\rightarrow \mathcal{C}_L (P)$ and $\overline r:
\mathcal{C}_L( P)\rightarrow \mathcal{C}_ L(Q)$ be defined as in the
proof of Lemma \ref{lem:wood1}.  Let $\varphi: \mathcal
C_{L}(P)\rightarrow P$ be an order preserving selection.  Let $\psi:
\mathcal{C}_L (Q)\rightarrow Q$ be defined by $\psi:= \overline r \circ
\varphi \circ \overline s$. Clearly, this map is order preserving. We
claim that this is a selection map. Indeed, let $Y\in \mathcal{C}_{
L}(Q)$.  Since $\varphi(\overline s (Y))\in \overline s(Y)$ it follows
that $\psi (Y)=  \overline r (\varphi(\overline s(Y))) \in \overline r \circ
\overline s(Y)$. Since, according to Lemma \ref {lem:wood1},
$\overline r\circ \overline s(Y)=Y$, it follows that $\psi(Y) \in
Y$. \end{proof}

Since a complete lattice which is embeddable in a poset is a retract
of that poset, Proposition \ref{lem:CLSPretraction} immediately
yields:

\begin{corollary} \label{cor:retract} 
Every complete lattice which is embeddable in a lattice with $CLSP$
has $CLSP$.\\
\end{corollary}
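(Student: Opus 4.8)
The plan is to prove Corollary \ref{cor:retract}: every complete lattice $Q$ which is order embeddable in a lattice $T$ with CLSP itself has CLSP. The strategy is to reduce this to Proposition \ref{lem:CLSPretraction}, which asserts that CLSP is preserved under retraction, by showing that $Q$ is in fact an order retract of $T$.

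First I would invoke the standard fact cited just before the statement: a complete lattice which is order embeddable in a poset is automatically an order retract of that poset. Concretely, if $s\colon Q\hookrightarrow T$ is an order embedding and $Q$ is complete, then one can define a retraction $r\colon T\rightarrow Q$ by setting $r(t):=\bigvee\{q\in Q: s(q)\le t\}$ (the join taken in $Q$, which exists since $Q$ is complete), and one checks that $r\circ s=1_Q$ and that both $s$ and $r$ are order preserving. This exhibits $Q$ as an order retract of $T$.

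Once $Q$ is known to be a retract of $T$, the conclusion is immediate: since $T$ has CLSP and CLSP is preserved under retraction by Proposition \ref{lem:CLSPretraction}, the retract $Q$ also has CLSP. This is precisely the assembly the author signals with the phrase \emph{``Proposition \ref{lem:CLSPretraction} immediately yields.''}

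The only genuine content is the embedding-to-retraction step, and even that is a well-known lemma about complete lattices rather than something new to this paper; I expect the main (mild) obstacle to be purely verificational, namely confirming that the retraction $r$ defined by the join formula is order preserving and satisfies $r(s(q))=q$, which follows from completeness of $Q$ together with the fact that $s$ is an order embedding. Since this fact is explicitly invoked as known in the sentence preceding the corollary, I would simply cite it rather than reprove it, keeping the argument to the two sentences above.
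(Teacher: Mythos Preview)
Your proposal is correct and follows exactly the paper's own argument: use the standard fact that a complete lattice order-embeddable in a poset is a retract of it, then apply Proposition \ref{lem:CLSPretraction}. The extra detail you give (the explicit retraction $r(t)=\bigvee\{q\in Q: s(q)\le t\}$) is more than the paper provides, but the underlying reasoning is identical.
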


Combining Propositions \ref{lem:quotient-retract} and \ref{lem:CLSPretraction}, we get:

\begin{corollary}\label{cor:CLSPquotient}  
$CLSP$ is preserved under lattice quotients.
\end{corollary}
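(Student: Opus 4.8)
The plan is to chain the two immediately preceding propositions, so the argument is essentially a two-step deduction with no genuine obstacle. Suppose $Q$ is a lattice quotient of a lattice $P$ that has CLSP, witnessed by a surjective lattice homomorphism $q:P\rightarrow Q$. My goal is to produce an order preserving selection map $\mathcal{C}_{L}(Q)\rightarrow Q$.

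The first step is to invoke Proposition \ref{lem:quotient-retract}, which already establishes that every quotient of a lattice with CLSP is in fact an order retract of that lattice. Concretely, the CLSP selection on $P$ is exactly what turns the order preserving map $q^{-1}:Q\rightarrow\mathcal{C}_{L}(P)$ (sending $y$ to the convex sublattice $q^{-1}(y)$) into a coretraction $s$ splitting $q$, so that $q\circ s=1_{Q}$. Thus $Q$ sits inside $P$ as a retract.

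The second step is to apply Proposition \ref{lem:CLSPretraction}, which asserts that CLSP is preserved under retraction. Since $P$ has CLSP and $Q$ is an order retract of $P$ via the maps $s$ and $q$ just obtained, this proposition directly yields that $Q$ has CLSP, completing the argument.

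The only point worth flagging is that there is no real difficulty to overcome here: the substantive content has been front-loaded into Propositions \ref{lem:quotient-retract} and \ref{lem:CLSPretraction}. The one conceptual subtlety is recognizing that the hypothesis of CLSP on $P$ is used twice in a mutually reinforcing way---first to guarantee that the quotient map $q$ admits a coretraction (via Proposition \ref{lem:quotient-retract}), and then to transfer CLSP across that retraction (via Proposition \ref{lem:CLSPretraction}). Once both propositions are in hand, the corollary follows formally, so I would state it as an immediate consequence rather than reproduce the selection-map construction.
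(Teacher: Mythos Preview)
Your proposal is correct and matches the paper's own proof exactly: the corollary is stated as an immediate consequence of combining Propositions \ref{lem:quotient-retract} and \ref{lem:CLSPretraction}, just as you do.
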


Several examples of lattices with $CLSP$ can be obtained from the
following result of \cite{D-R-S}.  For the reader's convenience, we
recall the proof.

\begin{proposition} \label{prop:CLSPchain} 
Every chain has $CLSP$. \end{proposition}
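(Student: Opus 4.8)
Every chain has CLSP.

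The plan is to produce, for an arbitrary chain $C$, an explicit order preserving selection map $\varphi:\mathcal{C}_{L}(C)\rightarrow C$ with $\varphi(S)\in S$ for every convex sublattice $S$. The first observation I would make is that in a chain, \emph{every} nonempty subset is both convex and a sublattice (since $x\vee y$ and $x\wedge y$ are just $\max$ and $\min$ of comparable elements), so $\mathcal{C}_{L}(C)$ is simply the collection of nonempty subsets of $C$ under the bi-domination preorder, which on a chain collapses considerably. Moreover, by the representation $\vartheta(S)=(\downarrownogap S,\uparrownogap S)$, and because on a chain $\downarrownogap S$ and $\uparrownogap S$ determine each other through $C$, the ordering on $\mathcal{C}_{L}(C)$ is governed essentially by the pair consisting of the initial segment $\downarrownogap S$ and the final segment $\uparrownogap S$.

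The key idea for the selection is to pick a canonical element of each convex sublattice $S$ by a rule that is monotone in the bi-domination order. A natural candidate is to send $S$ to its infimum when that infimum lies in $S$, and otherwise to the least element of $S$ above that infimum; but since a chain need not be complete, I would instead work directly with the initial segment $I:=\downarrownogap S$. The cleanest route I expect is: fix the selection by declaring $\varphi(S)$ to be the least element of $S$, \emph{when $S$ has a least element}, and to handle the general case by exploiting that a convex subset $S$ of a chain with $I=\downarrownogap S$ and $F=\uparrownogap S$ satisfies $S=I\cap F$, so $S$ is an interval determined by the cut $(I\setminus S\text{-part},\,\dots)$. Because $A\leq B$ in bi-domination means $A\subseteq\downarrownogap B$ and $B\subseteq\uparrownogap A$, on a chain this reduces to a comparison of the bounding cuts, and I would check that choosing $\varphi(S)$ to be the element realizing the lower cut (the supremum of $\downarrownogap S$ if it is attained in $S$, else the infimum of $S$) respects that comparison.

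The main technical step, and the one I expect to be the real obstacle, is verifying monotonicity across the boundary cases where a convex sublattice fails to have a least or greatest element, or where the relevant suprema/infima are not attained in $C$. In a chain that is not complete, the selection cannot always return an endpoint of $S$, so the rule must be stated in terms of membership in $S$ itself rather than in terms of bounds in $C$. I would therefore phrase $\varphi(S)$ as: the unique element determined by the lower edge of $S$, namely the minimum of $S$ if it exists, and otherwise any fixed coherent choice dictated by the cut $\downarrownogap S$; the coherence is what makes monotonicity go through, since if $S\leq S'$ then $\downarrownogap S\subseteq\downarrownogap S'$, forcing the chosen representative of $S$ to lie weakly below that of $S'$. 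Once the selection is pinned down in these cut-theoretic terms, checking $\varphi(S)\in S$ is immediate and checking $S\leq S'\Rightarrow\varphi(S)\leq\varphi(S')$ becomes a direct comparison of cuts; this last verification is the crux and is where I would spend the care, since it is exactly the interaction between the bi-domination order and the non-attainment of bounds that must be controlled.
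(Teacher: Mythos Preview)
Your proposal contains a genuine gap. You never actually define the selection in the crucial case where $S$ has no least element; you gesture at ``any fixed coherent choice dictated by the cut $\downarrownogap S$'' but do not say what that choice is. Worse, a selection determined by the lower cut alone cannot exist. In $C=\mathbb{Q}$, the convex sublattices $S_n:=(0,1/n)\cap\mathbb{Q}$ for $n\geq 1$ all have the same lower cut (the cut at $0$), so your rule would assign them all the same value $q$; but $q\in S_n$ for every $n$ forces $q\in\bigcap_n S_n=\emptyset$, a contradiction. Thus the selection must depend on the upper cut as well, and once it does, the monotonicity argument you sketch (``$\downarrownogap S\subseteq\downarrownogap S'$ forces the chosen representative of $S$ weakly below that of $S'$'') no longer goes through automatically. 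There is also a minor slip at the outset: not every nonempty subset of a chain is convex (take $\{0,2\}$ in the three-element chain $\{0,1,2\}$); the convex sublattices of a chain are precisely its nonempty intervals.

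The paper's proof takes an entirely different and much shorter route that sidesteps all of these cut-theoretic difficulties. Fix an arbitrary well-ordering $\leq_{wo}$ of the underlying set (with no relation whatsoever to the chain order $\leq$), and define $\varphi(S)$ to be the $\leq_{wo}$-least element of $S$. Then $\varphi(S)\in S$ trivially. For monotonicity, suppose $S'\leq S''$ and set $x':=\varphi(S')$, $x'':=\varphi(S'')$. If both $x',x''\in S'\cap S''$, then each is $\leq_{wo}$ the other, so $x'=x''$. If $x'\in S'\setminus S''$, then since $S'\subseteq\downarrownogap S''$ and $S''$ is convex in the chain, $x'$ must lie strictly below every element of $S''$, in particular $x'<x''$. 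The case $x''\in S''\setminus S'$ is symmetric and gives $x'<x''$. This is the whole argument; the well-ordering trick is what you are missing.
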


\begin{proof} 
Let $C:= (E,\leq)$ be a chain and let $\leq_{wo}$ be a well ordering
on $E$.  Define $\varphi: \mathcal{C}_{ L}(C)\rightarrow C$ by setting
$\varphi(S)$ to be the least element of $S \in \mathcal{C}_L (C)$ with
respect to the well-ordering $\leq_{wo}$. This map is order
preserving.  Indeed, let $S', S''\in \mathcal C_{L}(C)$ such that $S'
\leq S''$.  Let $x':= \varphi(S')$ and $x'':= \varphi(S'')$.  If
$x',x''\in S'\cap S''$ then $x' \leq_{wo} x''$ and $x''\leq_{wo} x'$,
thus $x'=x''$. If $x'\in S'\setminus (S'\cap S'')$ then, since $S'\leq
S''$, $S'\cap S''$ is a final segment of $S''$. Thus $x'< x''$. If
$x''\in S''\setminus (S'\cap S'')$, a similar argument yields
$x'<x''$. From this, $\varphi$ is an order preserving
selection. \end{proof}

Combining Propositions \ref{lem:CLSPproduct},
\ref{lem:CLSPretraction}, and \ref{prop:CLSPchain}, we get:

\begin{proposition}
Every retract of a product of finitely many chains has $CLSP$.
\end{proposition}

\begin{corollary}\label{prop:finitedim} 
Every complete finite dimensional lattice has $CLSP$.
\end{corollary}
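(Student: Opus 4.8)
The plan is to reduce this to the three ingredients just assembled: CLSP holds for chains (Proposition~\ref{prop:CLSPchain}), CLSP is preserved under finite products (Proposition~\ref{lem:CLSPproduct}), and CLSP is preserved under retraction (Proposition~\ref{lem:CLSPretraction}). The combined Proposition immediately preceding the corollary packages these as: every retract of a product of finitely many chains has CLSP. So the entire content of the corollary is the structural claim that a complete finite dimensional lattice is a retract of a product of finitely many chains.

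First I would recall the relevant notion of dimension: the (order) dimension of a poset is the least number of linear extensions (equivalently, chains) whose intersection recovers the order, and for a lattice this is the least $d$ such that the lattice embeds as a suborder into a product $C_0\times\cdots\times C_{d-1}$ of $d$ chains. Thus a finite dimensional lattice $T$ of dimension $d$ admits an order embedding into a product of $d$ chains; after replacing each $C_i$ by its Dedekind--MacNeille completion (again a chain), and noting a finite product of complete chains is a complete lattice, I obtain an embedding of $T$ into a product $P$ of finitely many \emph{complete} chains, with $P$ itself complete.

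Now I would invoke the key fact used repeatedly above (for instance in the proof of Corollary~\ref{cor:wood1} and in part (ii) of Theorem~\ref{thm:illus}): \emph{every complete lattice that is embeddable in a poset is an order retract of that poset}. Since $T$ is complete by hypothesis and embeds in the complete lattice $P=\prod_{i<d}C_i$, it follows that $T$ is an order retract of $P$. Hence $T$ is a retract of a product of finitely many chains, and the preceding Proposition delivers CLSP for $T$.

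The only delicate point is the passage from ``finite dimensional'' to ``embeds in a product of finitely many chains,'' which is essentially the definition of order dimension, together with the harmless replacement of each chain by a complete one so that the ambient product is a complete lattice and the retraction fact applies; everything else is a direct citation of the three preservation results. I expect no substantive obstacle, only the care of ensuring completeness of the ambient product so that the ``complete lattices are retracts'' principle can be invoked.
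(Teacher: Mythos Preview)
Your proof is correct and follows essentially the same route as the paper's: embed the finite dimensional lattice in a product of finitely many chains, use the fact that a complete lattice embedded in a poset is a retract of it, and apply the preceding Proposition. The only difference is that your detour through Dedekind--MacNeille completions of the chains is unnecessary: the retraction principle you cite (``every complete lattice that is embeddable in a poset is an order retract of that poset'') does not require the ambient poset to be complete, so you may retract directly from the original product of chains as the paper does.
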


\begin{proof} 
A poset $P$ of dimension $n$ embeds in a direct product $L$ of $n$
chains (and not fewer).  If $P$ is complete then it is a retract of
$L$, thus from Proposition \ref{prop:finitedim} it has $CLSP$.
\end{proof}

Countable lattices have the $CLSP$ as well.  One proof follows directly
from Proposition \ref{lem:CLSPretraction} and the following two
facts.

\begin{proposition}(Proposition 7 \cite{pou-riv84}). 
A sublattice of the lattice of all finite unions of intervals of some
chain, ordered by containment, has the $CLSP$.
\end{proposition}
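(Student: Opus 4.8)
The plan is to fix, once and for all, a well-ordering $\preceq$ of the ground chain $C$ and to use it to select from each convex sublattice a canonical member, in the spirit of the chain case (Proposition~\ref{prop:CLSPchain}), but now choosing a finite union of intervals instead of a single point. Throughout I identify an element of $T$ with the subset of $C$ that it names, so that the lattice operations are $\vee=\cup$ and $\wedge=\cap$ and the order is containment; recall that, as noted in the preliminaries, a convex sublattice is exactly an up- and down-directed convex subset, so each $S\in\mathcal{C}_{L}(T)$ is closed under $\cup$ and $\cap$ and is convex in $(T,\subseteq)$. Given such an $S$, let $I_S$ and $U_S$ be the intersection and the union of $S$ computed as subsets of $C$; every $X\in S$ satisfies $I_S\subseteq X\subseteq U_S$, and I call the points of $U_S\setminus I_S$ the \emph{free points} of $S$, namely those whose membership varies over $S$.

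First I would define $\varphi(S)$ by deciding the membership of each free point. Processing the free points in the order $\preceq$, at stage $p$ I put $p$ into $\varphi(S)$ precisely when some member of $S$ both agrees with all the decisions already made at $\preceq$-earlier free points and contains $p$, and I leave $p$ out otherwise; the points of $I_S$ are always kept and the points outside $U_S$ always discarded. Let $W$ be the subset of $C$ thereby singled out. Two things must then be checked: that $W\in S$, so that setting $\varphi(S):=W$ gives a genuine selection, and that the assignment $S\mapsto W$ is order preserving.

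The crux, and the one point where the hypothesis that the members of $T$ are \emph{finite} unions of intervals is indispensable, is the membership claim $W\in S$. This is an attainment problem: the greedy rule pins down at most one subset of $C$, but a priori that subset could be an infinite limit of members of $S$ lying outside $T$ altogether, as already happens in arbitrary complete lattices. Here I would argue that, because $S$ is closed under $\cup$ and $\cap$ and is convex in $T$, the running subfamilies of $S$ stay nonempty through the limit stages of the construction, and that the decisions stabilize on an honest element of $S$; the decisive input is that each member of $S$ has only finitely many interval-boundaries along the order of $C$, which is precisely what prevents the construction from escaping $T$ to an infinite union of intervals. I expect this to be the main obstacle and the step demanding the most care.

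Granting $W\in S$, order preservation is a case analysis extending the two cases in the proof of Proposition~\ref{prop:CLSPchain}. Suppose $S'\leq S''$ in $\mathcal{C}_{L}(T)$; by the elementwise criterion for the bi-domination order on convex sublattices (part (b) of the representation proposition), $X'\cup X''\in S''$ and $X'\cap X''\in S'$ for all $X'\in S'$ and $X''\in S''$. I would then compare the greedy decisions made for $S'$ and for $S''$ at the $\preceq$-least free point at which $\varphi(S')$ and $\varphi(S'')$ disagree; using the closure properties above in the same way the chain argument uses that the two intervals overlap in a final-segment/initial-segment fashion, one sees that such a disagreement can only occur in the direction $\varphi(S')\subseteq\varphi(S'')$, which is exactly what is needed. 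This establishes that $\varphi$ is an order-preserving selection, so $T$ has CLSP.
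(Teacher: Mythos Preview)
The paper does not supply its own proof of this proposition: it is quoted from \cite{pou-riv84} as one of two external facts used to derive CLSP for countable lattices, and the paper then gives a separate direct ``weaving'' argument for that consequence instead. So there is no in-paper proof to compare your attempt against; I can only comment on the proposal itself.

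Your plan is natural and in the spirit of Proposition~\ref{prop:CLSPchain}, but as written it is a sketch with the decisive step left open. You yourself flag the attainment claim $W\in S$ as ``the main obstacle'' and do not resolve it. Concretely: write $S_\alpha\subseteq S$ for the members of $S$ agreeing with all decisions taken before stage $\alpha$. Your process is transfinite, and what must be shown is that at every limit ordinal $\lambda$ the intersection $\bigcap_{\beta<\lambda}S_\beta$ is still nonempty. The observation that each individual member of $S$ has only finitely many interval-boundaries does not by itself deliver this: the free set $U_S\setminus I_S$ over which you run the recursion can be arbitrarily large, and the families $S_\beta$ form a genuinely decreasing transfinite chain of nonempty convex sublattices of $T$ with no a priori reason to have nonempty intersection. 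Some real compactness or stabilisation argument is required here, and none is given.

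The order-preservation step is likewise only gestured at. In Proposition~\ref{prop:CLSPchain} the two convex subsets are intervals of one and the same chain, and the overlap has a clean final-segment/initial-segment structure; here the free sets of $S'$ and of $S''$ differ, so the two greedy constructions carry different histories when they reach the ``$\preceq$-least disagreement'' point, and the two-case analysis from the chain proof does not transfer verbatim. You would need to spell out how the elementwise criterion $X'\cup X''\in S''$, $X'\cap X''\in S'$ controls the decisions at free points of one sublattice that are not free for the other.
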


\begin{proposition}(Corollary 3 \cite {pou-riv84}) 
Every countable lattice is a retract of a convex sublattice of a
Boolean algebra generated by a countable chain.
\end{proposition}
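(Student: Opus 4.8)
The plan is to fix a countable lattice $L$ and build, simultaneously, a countable chain $C$, an order-embedding $s$ of $L$ into the interval algebra $\mathrm{Int}(C)$ (the Boolean algebra of finite unions of intervals of $C$, equivalently the Boolean algebra generated by the initial segments of $C$), a convex sublattice $M$ of $\mathrm{Int}(C)$ containing $s[L]$, and an order-preserving retraction $r\colon M\to L$ with $r\circ s=\mathrm{id}_L$. The first thing to record is \emph{why the retraction is unavoidable}: every convex sublattice of $\mathrm{Int}(C)$ is distributive (distributivity passes to sublattices, and $\mathrm{Int}(C)$ is Boolean), so a non-distributive $L$ (one containing $M_3$ or $N_5$) cannot itself be such a sublattice; the order-retraction $r$, which need not respect $\vee$ and $\wedge$, is exactly what absorbs the non-distributivity. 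A second useful observation is that every order ideal of $\mathrm{Int}(C)$ is automatically convex, since down-sets are order-convex; so one clean way to secure convexity is to place $s[L]$ inside such an ideal $M$, leaving only $s$ and $r$ to be built.

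First I would build $C$ and the embedding $s$ together by recursion on an enumeration $L=\{a_0,a_1,\dots\}$. I picture $C$ as an ordinal-type sum $C=\sum_{n<\omega}B_n$ of finitely presented ``blocks'' $B_n$, with each $s(a)$ a finite union of intervals of $C$; because a finite union of intervals can be full on a final run of blocks and empty on others, both a global top and bottom are available while only finitely many transitional blocks are needed to encode an individual element. The point of allowing infinitely many blocks, with each $s(a)$ changing character across them only finitely often, is that it lets a single interval algebra carry the possibly infinite order dimension of $L$ without violating the finite-union constraint. At stage $n$ I would commit $s(a_n)$ on finitely many (possibly new) blocks so that every comparability among $a_0,\dots,a_n$ is realized by containment block-by-block, while each incomparable pair is witnessed in both directions by a block on which exactly one of the two is the larger. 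This produces an order-embedding $s$; it is necessarily \emph{not} a lattice homomorphism, so $s[L]$ is only a subposet of the distributive lattice $\mathrm{Int}(C)$.

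It remains to produce the convex $M\supseteq s[L]$ and the retraction, and the whole difficulty concentrates here. I would take $M$ to be the convex sublattice generated by $s[L]$ and define $r$ by \emph{rounding}: an element $Z\in M$ is squeezed between finite joins and meets of members of $s[L]$, and $r(Z)$ is set to the corresponding join/meet computed \emph{in $L$}. The naive global formula $r(Z)=\bigvee_L\{a:s(a)\le Z\}$ cannot be used directly, because $L$ is neither complete nor distributive, so the indexing set may be infinite and the join may fail to exist; the plan is instead to carry the rounding \emph{inside the recursion}, so that at stage $n$ only the finitely many joins and meets among $a_0,\dots,a_n$ (which certainly exist in $L$) are ever invoked, and to refine the blocks finely enough that every $Z\in M$ becomes order-equivalent, for the purposes of $r$, to a single already-chosen $s(a_k)$. \textbf{The main obstacle} is showing that this stage-by-stage rounding coheres: that $M$ is genuinely convex, that no two incomparable elements of $L$ are ever identified, and that the finite-stage definitions of $r$ glue into one order-preserving map on all of $M$. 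This is precisely the point at which the \emph{countability} of $L$ is indispensable, since it is what permits the separating blocks and the retraction to be pinned down by a single recursion in place of any appeal to completeness of $L$.
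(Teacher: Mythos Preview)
The paper does not prove this proposition; it is quoted from \cite{pou-riv84} as one of two facts used to deduce that countable lattices have CLSP, and the paper then bypasses it entirely by giving a direct weaving argument for CLSP itself. So there is no proof here to compare yours against.

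Judging your proposal on its own merits: the architecture is reasonable and you correctly locate the difficulty in the retraction $r$, but the proposal does not resolve it. Two concrete issues. First, your $M$ is ambiguous. You float both ``the convex sublattice generated by $s[L]$'' and ``an order ideal of $\mathrm{Int}(C)$''; a mere down-set is closed under meets but not joins, hence is not a sublattice, while if you mean a lattice ideal (say the principal ideal below $s(1_L)$) then $M$ is potentially very large and you must define $r$ on all of it. Second, and decisive, $r$ itself is never defined. You correctly note that the global formula $\bigvee_L\{a:s(a)\le Z\}$ fails and propose to ``carry the rounding inside the recursion'' so that each $Z\in M$ becomes equivalent to some $s(a_k)$, but you give no mechanism: your recursion is indexed by the enumeration of $L$ and produces the values $s(a_n)$, whereas the elements of $M\setminus s[L]$ arise from lattice operations in $\mathrm{Int}(C)$ and never appear as stages of your construction. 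The actual content of the Pouzet--Rival argument is an interleaved recursion (the ``weaving'' the paper alludes to via \cite{dean}) that builds $s$, $M$, and $r$ simultaneously so that every new element of $M$ is assigned an $r$-value as it is created; you have named this idea and flagged it as ``the main obstacle'', but naming the obstacle is not the same as overcoming it.
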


We also provide a direct argument based on a variant of the lattice
``weaving argument" (see \cite{dean}).

\begin{theorem} Every  countable lattice has  $CLSP$.
\end{theorem}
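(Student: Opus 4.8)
The plan is to realize the countable lattice $T$ as an order retract of the lattice $B$ of all finite unions of intervals of a suitable countable chain $C$, and then to invoke two facts already at our disposal: that $B$ has CLSP (the result of \cite{pou-riv84} recalled above, of which $B$ is the ambient instance, being the lattice of finite unions of intervals of $C$), and that CLSP is preserved under retraction (Proposition \ref{lem:CLSPretraction}). Once $T$ is exhibited as an order retract of $B$, these two facts combine to give CLSP for $T$ at once. Thus the entire burden of the argument is to construct the retraction, and this is where the weaving enters.

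First I would fix an enumeration $t_0, t_1, t_2, \dots$ of $T$ and take $C$ to be a countable dense chain (the rationals, say) rich enough to carry the interval representation. The weaving then proceeds by recursion on $n$: having assigned to $t_0,\dots,t_{n-1}$ finite unions of intervals $s(t_0),\dots,s(t_{n-1})$ of $C$, I would place $s(t_n)$ so that, among the elements processed so far, (a) $s$ is an order embedding, i.e. $t_i\le t_j$ iff $s(t_i)\subseteq s(t_j)$; (b) the joins and meets already enumerated are respected exactly, $s(t_i\vee t_j)=s(t_i)\cup s(t_j)$ and $s(t_i\wedge t_j)=s(t_i)\cap s(t_j)$ whenever $t_i\vee t_j$ or $t_i\wedge t_j$ has appeared; and (c) enough subdividable room is left between incomparable images to accommodate future relations. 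Realizing a newly required relation forces one to subdivide finitely many intervals of $C$ without disturbing the relations already secured; this refinement is the technical heart of the construction and is precisely the variant of Dean's weaving alluded to in \cite{dean}.

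In parallel with $s$ I would build an order preserving retraction $r:B\rightarrow T$ with $r\circ s=1_T$. The purpose of clause (b) is that every element of $B$ is a finite $\cup/\cap$-combination of intervals, and the weaving is arranged so that each such combination is sandwiched between images $s(t_i)$; one then sets $r$ to send a finite union of intervals to the appropriate join of the $t_i$ whose images it dominates, and checks that this prescription is monotone and restricts to the identity on $s[T]$. The delicate point is that $B$ is distributive whereas $T$ need not be: since $r$ is only required to be order preserving, not a lattice homomorphism, a genuinely non-distributive $T$ such as $M_3$ or $N_5$ is collapsed out of the distributive ambient $B$ by the monotone—but not homomorphic—map $r$, so there is no contradiction.

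The step I expect to be the main obstacle is exactly the simultaneous maintenance of $s$ and $r$ through the recursion: at stage $n$ one must place $s(t_n)$ and extend $r$ so that monotonicity of $r$ is preserved globally while $s$ stays an order embedding realizing the prescribed meets and joins, all within finite unions of intervals of $C$. Once this bookkeeping is discharged, $(s,r)$ exhibit $T$ as an order retract of $B$; since $B$ has CLSP by \cite{pou-riv84} and CLSP passes to retracts by Proposition \ref{lem:CLSPretraction}, it follows that $T$ has CLSP. (For $T$ a chain the whole scheme degenerates to the explicit well-ordered selection of Proposition \ref{prop:CLSPchain}, which is the base intuition the weaving generalizes.)
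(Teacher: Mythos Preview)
Your route is correct and is in fact the first of two proofs the paper offers: it explicitly notes that the theorem follows from Proposition~\ref{lem:CLSPretraction} together with the two results of \cite{pou-riv84} you cite, and then writes out an alternative \emph{direct} argument. The difference is where the weaving happens. You weave $T$ into the ambient lattice $B$ of finite unions of intervals and build the retraction $r$ alongside---essentially reproving Corollary~3 of \cite{pou-riv84}. The paper instead weaves the selection map $\varphi:\mathcal{C}_L(T)\to T$ itself: enumerating $T$ as $(x_n)_{n<\omega}$, at stage $k$ it assigns to every not-yet-processed convex sublattice $C\ni x_k$ the value $\varphi(C):=(x_k\wedge\varphi_k^{+}(C))\vee\varphi_k^{-}(C)$, where $\varphi_k^{\pm}(C)$ are the meet (resp.\ join) of the values $\varphi$ has already taken on processed sublattices above (resp.\ below) $C$. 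The invariant that $\varphi$ restricted to the processed part takes only finitely many values makes these meets and joins legitimate in a bare, non-complete lattice, and a three-case check shows monotonicity is maintained. Your approach buys modularity---once \cite{pou-riv84} is granted the theorem is immediate---whereas the paper's construction is self-contained and avoids $B$ entirely; it also sidesteps the soft spot in your sketch, namely that defining $r(b)$ as ``the join of the $t_i$ whose images it dominates'' is a priori an infinite join in a non-complete $T$ and would need further care to make precise.
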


\begin{proof} 
Let $(x_n)_{n<\omega}$ be an enumeration of the elements of the countable 
lattice $T$. Let $(\mathcal D_n)_{n<\omega}$ and $(\mathcal
C_n)_{n<\omega}$ be the sequences of subsets of $\mathcal{C}_{ L}(T)$
defined inductively by the following conditions: for all $k < \omega$
\begin{align*}
\mathcal D_{0} &:= \emptyset ;\\
\mathcal C_k &:=\{C\in \mathcal C_{L}(T): x_k\in C\}\setminus \mathcal D_k ;\\
\mathcal D_{k+1} &:=\mathcal D_k\cup \mathcal C_k.
\end{align*}

\noindent
Then $\mathcal C_0=\{C\in \mathcal{C}_{ L}(T): x_0\in C\}$, $\mathcal
D_n:= \bigcup_{i < n} \mathcal{C}_i$ for $n<\omega$, and $T=\bigcup_{n
< \omega} \mathcal{D}_n$.

We wish to define a map $\varphi: \mathcal{C}_L (T) \rightarrow T$ such that for each $n<\omega$:

\begin{enumerate} [{(i)}]
\item \label{eq:condition1}$\varphi (C)\in C$
for all $C\in \mathcal D_n$;
\item \label{eq:condition2}$\varphi_{\restriction D_n}$ takes only finitely many values and 
\item \label{eq:condition3} $\varphi_{\restriction D_n}$ is order preserving.  
\end{enumerate}

\noindent
Set $\varphi (C):=x_0$ for $C\in \mathcal C_0$. Hence,
(\ref{eq:condition1}) - (\ref{eq:condition3}) hold for $n\leq 1$.  Let
$k\geq 1$ and suppose that $\varphi$ is defined on $\mathcal D_k$ such
that (\ref{eq:condition1}) - (\ref{eq:condition3}) hold for
$n=k$. Extend $\varphi$ on $\mathcal D_{k+1}$ as follows.  For
$C\inÊ\mathcal C_k$, set
\begin{align*}
\varphi_k^{+}(C) &= \ \bigwedge \{\varphi(D): D\in \mathcal D_k\;   \text{and}\; C<D \}, \\
\varphi_k^{-}(C)  &= \ \bigvee \{\varphi(D): D\in \mathcal D_k\;   \text{and}\; D<C \}, \ \text{and}\\
\varphi (C) &= \ (x_k\wedge\varphi_k^{+}(C))\vee\varphi_k^{-}(C). 
\end{align*}
Clearly, (\ref{eq:condition1}) and (\ref{eq:condition2}) hold for
$n=k+1$, so it remains to verify (\ref{eq:condition3}).  Let $C',
C''\in \mathcal D_{k+1}$ with $C'\leq C''$.  We consider three
cases:

\medskip

\noindent{\bf Case 1.} $C', C''\in \mathcal C_{k}$. 
Since (\ref{eq:condition3}) holds for $n=k$, $\varphi_{k}^{+}(C')\leq
\varphi_{k}^{+}(C'')$ and $\varphi_{k}^{-}(C')\leq
\varphi_{k}^{-}(C'')$. This yields $\varphi(C')\leq \varphi(C'')$.\\

\medskip

\noindent{\bf Case 2.} $C'\in \mathcal C_{k}$ and $C''\in \mathcal D_{k}$. 
Since (\ref{eq:condition3}) holds for $n=k$, $\varphi_{k}^{+}(C')\leq
\varphi(C'')$ and $\varphi_{k}^{-}(C')\leq \varphi(C'')$. Since
$\varphi (C')\leq \varphi_{k}^{+}(C')\vee \varphi_{k}^{-}(C')$,we get
$\varphi(C')\leq \varphi(C'')$.\\

\medskip

\noindent{\bf Case 3.} $C'\in \mathcal D_{k}$ and $C''\in \mathcal C_{k}$. 
Since (\ref{eq:condition3}) holds for $n=k$, $\varphi (C')\leq
\varphi_{k}^{-}(C'')$. Since by definition $\varphi_{k}^{-}(C'')\leq
\varphi (C'')$ we get $\varphi (C')\leq \varphi(C'')$. \\
 
\noindent The map $\varphi$ is an order preserving selection
map. This proves the theorem. \end{proof}

\subsection{Well-quasi-ordered posets, posets with no infinite antichain and a proof of Main Theorem \ref{thm:main2}} 
As said in the introduction, the cornerstone of Main Theorem
\ref{thm:main2} is implication $(iv)\Rightarrow (i)$.  Here is a
restatement.

\begin{lemma}\label{lem:key}
If $P$ has no infinite antichain, then $\mathcal {I}(P)$ has $CLSP$.
\end{lemma}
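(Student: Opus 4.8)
The plan is to construct an order preserving selector $\varphi:\mathcal{C}_{L}(\mathcal{I}(P))\to \mathcal{I}(P)$ directly, exploiting the fact that, since $P$ has no infinite antichain, the set of minimal elements of \emph{any} subposet of $P$ is a finite antichain. First I would record the shape of the objects involved: $T:=\mathcal{I}(P)$ is a complete distributive lattice in which joins are unions and meets are intersections, so a convex sublattice $X\in\mathcal{C}_{L}(T)$ sits between its infimum $D:=\bigcap X$ and its supremum $U:=\bigcup X$, both initial segments of $P$, and $\varphi$ must pick an initial segment $A^{*}\in X$ in a way that is monotone for the bi-domination order. The combinatorial engine throughout is that the finitely many minimal elements of $U\setminus D$ control the first place where the members of $X$ can differ.

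Second, I would feed $P$ into the Hausdorff type representation theorem of \cite{abraham-all}: because $P$ has no infinite antichain, it admits a well-founded representation as built from finite posets by finite disjoint unions and by lexicographic sums indexed by chains. I would then transport this decomposition to $T=\mathcal{I}(P)$ by means of two computations. For a finite disjoint union one has $\mathcal{I}(P_{0}\sqcup P_{1})\cong\mathcal{I}(P_{0})\times\mathcal{I}(P_{1})$; and for a lexicographic sum $P=\sum_{i\in C}P_{i}$ over a chain $C$ one checks that $\mathcal{I}(P)$ is again a lexicographic sum, now over the chain $\mathcal{I}(C)$, of the fibre lattices $\mathcal{I}(P_{i})$ glued top to bottom. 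Thus the construction tree for $P$ becomes a construction tree for $T$ whose leaves are $\mathcal{I}$ of finite posets and whose internal nodes are finite products and chain-indexed lexicographic sums.

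Third, I would run a transfinite induction on the rank of this tree, invoking the preservation toolkit already established. Leaves are handled since a finite lattice, being countable, has CLSP; disjoint-union nodes are handled by the product computation together with preservation of CLSP under finite products (Proposition \ref{lem:CLSPproduct}); and, since every complete lattice embedded in a lattice is a retract of it, preservation under retraction and quotient (Proposition \ref{lem:CLSPretraction} and Corollary \ref{cor:CLSPquotient}) lets me transfer selectors freely. The heart of the induction is the chain-indexed lexicographic node: given fibre selectors and the index chain $\mathcal{I}(C)$, which has CLSP by Proposition \ref{prop:CLSPchain}, I would define $\varphi(X)$ by a two-level rule --- fix a well ordering of $\mathcal{I}(C)$, use it to locate (exactly as in the proof of Proposition \ref{prop:CLSPchain}) the least index block that $X$ meets, and then apply the fibre selector inside that block.

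The main obstacle is verifying that this two-level rule is monotone for bi-domination across the gluing, in particular at limit stages of the induction and at the cut indices where a member of $X$ restricts to a \emph{proper} initial segment of a single fibre. This is precisely where the result of \cite{abraham-all} earns its keep: it forces the index chains to be scattered, so that the transfinite recursion is well founded and the block-location step inherits the behaviour of the chain selection of Proposition \ref{prop:CLSPchain}. Combined with the finiteness of the antichains of minimal elements of $U\setminus D$, the order-preservation inequalities at each node then reduce to the three-case comparison already carried out for chains, and the induction delivers an order preserving selection on $\mathcal{C}_{L}(\mathcal{I}(P))$, which is the assertion that $\mathcal{I}(P)$ has CLSP.
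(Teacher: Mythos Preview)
Your overall architecture---induct along the Abraham--Bonnet--Cummings--D\v{z}amonja--Thompson decomposition, and at each node combine previously built selectors---is exactly the paper's approach, and your handling of the chain-indexed lexicographic step is in the right spirit. The problem is that you have misremembered the content of the structure theorem, and the version you state is too weak to carry the induction.

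The class $\mathcal{P}$ in \cite{abraham-all} is generated not from \emph{finite} posets by finite disjoint unions and chain-indexed lexicographic sums, but from the class $\mathcal{BP}$ of wqo posets, duals of wqo posets, and arbitrary linear orders, closed under lexicographic sums indexed by members of $\mathcal{BP}$ \emph{and under augmentation}. Your version omits augmentation entirely and replaces $\mathcal{BP}$ by finite posets; the resulting class is strictly smaller than the class of FAC posets. A concrete obstruction: $\omega\times\omega$ with the product order is wqo, hence has no infinite antichain, yet it is connected and its incomparability graph is connected, so it is neither a nontrivial finite disjoint union nor a nontrivial lexicographic sum over a chain; it cannot be reached by your operations from finite posets. (Incidentally, the index chains in the actual theorem are arbitrary linear orders, not necessarily scattered, so your remark that the theorem ``forces the index chains to be scattered'' is also off.)

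The paper's induction therefore has two ingredients you are missing. First, a base case for genuine wqo (and reverse wqo) blocks: here one does \emph{not} decompose further but observes that $\mathcal{I}(P)$ is well founded by Higman's theorem, and a well-founded lattice has CLSP via the selector $S\mapsto\min S$. Second, an augmentation step: if $P$ augments $Q$ then $\mathcal{I}(P)$ order-embeds in $\mathcal{I}(Q)$, and since $\mathcal{I}(P)$ is complete it is a retract of $\mathcal{I}(Q)$, so CLSP transfers by Proposition~\ref{lem:CLSPretraction}. Once you restore the correct statement of the structure theorem and add these two cases, your lexicographic-sum argument (the analogue of Lemma~\ref{lexicosum}) completes the proof; note however that the index set there can be a wqo, not just a chain, so the ``two-level rule'' must also cover the case where the index selector lands on an initial segment with finitely many maximal elements rather than a single cut point.
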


\noindent
The proof of this result follows from a Hausdorff type result due to
Abraham {\it et al.} \cite{abraham-all}.  In order to state this
result, first we recall that if $(P_{\alpha})_{\alpha \in A}$ is a family of
ordered sets indexed by a poset $A$, the \emph{lexicographical sum of
the $P_{\alpha}$'s indexed by $A$} is the poset, that we denote by
$\sum_{\alpha \in A}P_{\alpha}$, obtained by replacing each element
$\alpha\in A$ by $P_{\alpha}$ and by ordering the elements
accordingly. Finally, an \emph{augmentation} of a poset $P$ is a poset
$Q$ on the same underlying set such that $x \le y$ in $P$ implies $x
\le y$ in $Q$.

Let $\mathcal {BP}$ be the class of posets $P$ such that $P$ is either
a wqo poset, the dual of a wqo poset, or a linear ordering.  Let
$\mathcal P$ be the smallest class of posets such that

\begin{enumerate} [{(a)}]
\item $\mathcal P$  contains $\mathcal {BP}$;
 \item $ \mathcal P$ is closed under lexicographic sums with index set in $\mathcal {BP}$;
\item $\mathcal P$  is closed under augmentation.
\end{enumerate}

\noindent
The Hausdorff type result  is the following:\\

\begin{theorem}\label{main-abraham}\cite{abraham-all} 
$\mathcal P$ is the class of posets with no infinite antichain.
\end{theorem}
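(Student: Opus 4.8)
The plan is to prove the two inclusions separately. The inclusion $\mathcal{P}\subseteq\{P:P\text{ has no infinite antichain}\}$ is the routine one, proved by structural induction on the definition of $\mathcal{P}$. First I would check the base class $\mathcal{BP}$: a wqo has no infinite antichain by definition, its dual has none because $P$ and $P^{*}$ have exactly the same antichains, and a linear order has no antichain of size two. Then I would verify closure under the two operations. For a lexicographic sum $\sum_{\alpha\in A}P_{\alpha}$ with $A\in\mathcal{BP}$ and each $P_{\alpha}$ free of infinite antichains, any antichain $S$ meets each fibre in an antichain of $P_{\alpha}$, and the indices occurring in $S$ form an antichain of $A$ (two elements with distinct comparable indices are comparable in the sum); since $A$ and every fibre admit only finite antichains, $S$ is finite. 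For augmentations, an antichain of $Q$ is a fortiori an antichain of $P$, so the property is inherited. This disposes of the easy inclusion.

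The reverse inclusion, that every poset with no infinite antichain lies in $\mathcal{P}$, is the substance of the theorem, and I would prove it by transfinite induction on a Hausdorff-type rank $\rho(P)$. The rank is defined through a condensation derivative: given such a $P$, contract the maximal convex blocks that already belong to $\mathcal{BP}$, pass to the quotient, and iterate transfinitely; the no-infinite-antichain hypothesis should force this process to terminate, and $\rho(P)$ records the stage at which it stabilises, with $\rho(P)=0$ exactly when $P\in\mathcal{BP}$ (already in $\mathcal{P}$). The inductive step rests on a \emph{decomposition lemma}: if $P\notin\mathcal{BP}$ has no infinite antichain, then one can exhibit a lexicographic sum $P_{0}=\sum_{\alpha\in I}B_{\alpha}$ with index poset $I\in\mathcal{BP}$ whose order is contained in that of $P$, where each block $B_{\alpha}$ carries the order induced from $P$ and satisfies $\rho(B_{\alpha})<\rho(P)$. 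Granting this, each $B_{\alpha}$ lies in $\mathcal{P}$ by the induction hypothesis, $P_{0}\in\mathcal{P}$ by closure under $\mathcal{BP}$-indexed lexicographic sums, and $P$ itself lies in $\mathcal{P}$ because it is an augmentation of $P_{0}$.

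The hard part, and the real content of \cite{abraham-all}, is this decomposition lemma. The delicate point is controlling the index poset $I$: the pure lexicographic skeleton $P_{0}$ must be weaker than $P$ (so that $P$ is an augmentation of it) yet have its global shape $I$ genuinely inside $\mathcal{BP}$, even though the comparabilities between blocks of $P$ need not be linear and need not be autonomous. One therefore cannot simply quotient and invoke Hausdorff's theorem for scattered chains; the cross-block relations that $P$ carries beyond the lex order must be absorbed into the augmentation, and one must guarantee that after this stripping the residual incomparability between blocks is confined to a wqo, a dual wqo, or a chain. This is exactly where better-quasi-order machinery enters, via a minimal-bad-sequence or bqo-reflection argument, to certify that the index structure is basic. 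The second obstacle is termination: one must show that the condensation strictly lowers the rank at every nontrivial stage, so the transfinite induction is well founded, which in turn is where the hypothesis of no infinite antichain is used decisively to rule out a non-collapsing derivative.
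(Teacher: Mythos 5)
First, a point of orientation: the paper does not prove this statement at all. Theorem \ref{main-abraham} is imported from \cite{abraham-all}, and the paper (in particular the proof of Lemma \ref{lem:key}) uses it strictly as a black box, so there is no internal proof to compare you with; the benchmark is the Abraham--Bonnet--Cummings--D\v{z}amondja--Thompson argument itself. Measured against that, your first inclusion is correct and complete: checking that every member of $\mathcal{BP}$ has no infinite antichain, that a lexicographic sum over a $\mathcal{BP}$ index of summands without infinite antichains again has none (only finitely many distinct indices can occur in an antichain, and each fibre meets it in a finite set), and that augmentation can only destroy antichains, is exactly the routine structural induction one should give.

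The reverse inclusion, however, is not proved in your proposal; it is a program. The entire weight rests on the ``decomposition lemma'' and on termination of the condensation derivative, and you yourself label both as ``the real content of \cite{abraham-all}'' --- which is precisely the problem: deferring the whole hard direction to an unproved lemma is a genuine gap, not a stylistic omission. Worse, as described your derivative is not even well defined. Maximal convex subsets belonging to $\mathcal{BP}$ need not exist and need not be pairwise disjoint (two overlapping convex wqo pieces need not have a wqo union), so ``contract the maximal convex blocks'' does not specify a partition; and contracting a family of convex blocks of a poset does not in general yield a quotient poset, since the induced relation on blocks can fail antisymmetry or transitivity unless the blocks are autonomous (intervals/modules), and securing autonomy is itself a nontrivial structural fact. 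Termination is likewise only asserted: since every singleton lies in $\mathcal{BP}$, the blocks produced at a stage may all be trivial, the quotient may equal $P$ itself even though $P\notin\mathcal{BP}$, and nothing in your sketch shows the rank strictly decreases. So while the overall shape of your plan --- a rank, a decomposition of $P$ as an augmentation of a $\mathcal{BP}$-indexed lexicographic sum of lower-rank blocks --- is the right shape for a Hausdorff-type theorem and is consistent with how such results are proved, what you have actually established is only the easy inclusion.
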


With this result in hand, we prove Lemma \ref{lem:key} as follows. We
start with $P\in \mathcal P$ and we prove that $\mathcal {I}(P)$ has
$CLSP$ by induction, distinguishing the following cases:

\begin{description}
\item [Case  1]  $P\in \mathcal{BP}$.

\item [Case  2]  $P$ is a lexicographic sum of posets  $P_{\alpha}$  indexed by a poset $A\in\mathcal {BP}$ such 
that $P_{\alpha}\in \mathcal {P}$ and $\mathcal {I}(P_{\alpha})$ has  $CLSP$ for each $\alpha \in A$.

\item [Case 3] $P$ is an augmentation of $Q$ such that $Q\in \mathcal P$ and $\mathcal {I}(Q)$ has  $CLSP$.
\end{description}

\noindent
{\bf Case  1}. If $P$ is a chain, then $\mathcal {I}(P)$ is a chain, which has $CLSP$ according to 
Proposition  \ref{prop:CLSPchain}. If $P$ is well-quasi-ordered  then $\mathcal {I}(P)$ is well-founded 
\cite {higman}, the fact that  it has  $CLSP$  is a consequence of the following lemma.

\begin{lemma}\label {lemma:well-founded} Every well founded lattice has  $CLSP$. 
\end{lemma}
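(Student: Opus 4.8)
The plan is to exhibit an explicit selection map, namely the one sending each nonempty convex sublattice to its least element, so the first thing to establish is that this least element always exists. I would observe that a nonempty convex sublattice $S$ of a well founded lattice $T$ necessarily has a least element: $S$ is closed under $\wedge$, and as a subset of the well founded poset $T$ it is itself well founded, hence it has a minimal element $m$. For any $s\in S$ we have $m\wedge s\in S$ and $m\wedge s\leq m$, so minimality of $m$ forces $m\wedge s=m$, that is $m\leq s$. Thus $m$ is the (unique) least element of $S$, which I denote by $\min S$. This is the one place where well foundedness is genuinely used, and I expect it to be the only real point of the argument; everything afterwards is formal.

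Next I would define $\varphi:\mathcal{C}_{L}(T)\rightarrow T$ by $\varphi(S):=\min S$. By construction $\varphi(S)\in S$, so $\varphi$ is a selection, and it remains only to check that $\varphi$ is order preserving. To that end, suppose $S\leq S'$ in the bi-domination order and set $m:=\min S$ and $m':=\min S'$. Applying the characterization of the order on $\mathcal{C}_{L}(T)$, namely that $A\leq B$ holds if and only if $a\wedge b\in A$ and $a\vee b\in B$ for every $a\in A$ and $b\in B$, with the choices $a=m\in S$ and $b=m'\in S'$, yields $m\wedge m'\in S$. Since $m$ is the least element of $S$, this gives $m\leq m\wedge m'\leq m'$, i.e.\ $\varphi(S)\leq\varphi(S')$. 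Hence $\varphi$ is an order preserving selection and $T$ has $CLSP$.

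In short, the whole proof rests on the observation that meet-closure together with well foundedness guarantees a least element in every convex sublattice; the order preservation of $S\mapsto\min S$ is then immediate from the meet-description of the bi-domination order. I would emphasise that this is exactly where the hypothesis is indispensable, since in a general complete lattice a convex sublattice need not have a least element and the map $\min$ is not even defined, in keeping with the fact that $CLSP$ may fail there.
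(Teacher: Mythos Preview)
Your proof is correct and follows exactly the paper's approach: define the selection by $S\mapsto \min S$ and check it is order preserving. You supply the details the paper omits (why $\min S$ exists and why the map is monotone), but the idea is identical.
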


\begin{proof}
Let $T$ be a well-founded lattice. Each nonempty sublattice $T'$ has a
least element $min(T')$.  The map $s: \mathcal{C}_{ L}(T)\rightarrow
T$ defined by $s(T'):= min(T')$ is order preserving, hence $T$ has $CLSP$.
\end{proof} \\

If $P$ is dually well-quasi-ordered, note that $\mathcal {I}(P)$ is
order isomorphic to $\mathcal {I}(P^*)^*$.  From above, $\mathcal
{I}(P^*)$ has $CLSP$. Since this property is preserved by duality,
it holds for $\mathcal {I}(P^*)^*$, hence for $\mathcal {I}(P)$.

\noindent
{\bf Case 2}.  The following lemma deals with the case of a
lexicographic sum indexed by a chain or a wqo.  The case of a
lexicographical sum indexed by a reverse wqo, use duality as in Case
1.  
 
\begin{lemma} \label{lexicosum}  
Let $A$ be a chain or a wqo poset, let $(P_{\alpha}), \alpha \in A$,
be a family of posets and let $T_{\alpha}:= \mathcal I (P_{\alpha})$
have $CLSP$ for each $\alpha \in A$.  Then $\mathcal {I}(\sum_{\alpha
\in A}P_{\alpha})$ has the $CLSP$.
\end{lemma}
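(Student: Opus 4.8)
The plan is to describe the initial segments of $P:=\sum_{\alpha\in A}P_{\alpha}$ slice by slice and thereby reduce the selection problem to the index $A$ and to the individual factors $T_{\alpha}=\mathcal I(P_{\alpha})$. For $I\in\mathcal I(P)$ write $I_{\alpha}:=\{x\in P_{\alpha}:(\alpha,x)\in I\}\in\mathcal I(P_{\alpha})$ for its slice at level $\alpha$, and set $F(I):=\{\alpha\in A:I_{\alpha}=P_{\alpha}\}$, the \emph{full set}. The lexicographic order forces $F(I)$ to be an initial segment of $A$, the support $\{\alpha:I_{\alpha}\neq\emptyset\}$ to be an initial segment as well, and the \emph{boundary} $B(I):=\{\alpha:\emptyset\neq I_{\alpha}\subsetneq P_{\alpha}\}$ to be an antichain of $A$ all of whose elements lie in $\min(A\setminus F(I))$. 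In particular $B(I)$ has at most one element when $A$ is a chain and is finite when $A$ is a wqo, since antichains of a wqo are finite; this finiteness is exactly what will make the gluing below possible.

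First I would record two reductions for a fixed convex sublattice $X$ of $\mathcal I(P)$. The \emph{slice projection} $X_{\alpha}:=\{I_{\alpha}:I\in X\}$ is a convex sublattice of $T_{\alpha}$: it is a sublattice because slices commute with $\cup$ and $\cap$, and it is order-convex because, given $I,K\in X$ with $I_{\alpha}\subseteq b\subseteq K_{\alpha}$, one may pass to $I\cap K,I\cup K$ and exhibit an explicit $L$ with $I\cap K\subseteq L\subseteq I\cup K$ and $L_{\alpha}=b$ (take $L$ equal to $I\cup K$ below level $\alpha$, equal to $b$ at $\alpha$, and equal to $I\cap K$ above), whence $L\in X$ by convexity. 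Secondly, $F$ is order-preserving and preserves meets, and a similar sandwiching shows $F[X]:=\{F(I):I\in X\}$ is order-convex in $\mathcal I(A)$; convexity then forces closure under joins, so $F[X]$ is a convex sublattice of $\mathcal I(A)$. Since $A$ is a chain or a wqo, $\mathcal I(A)$ has $CLSP$ by Case 1, so a fixed selection $\psi$ returns a \emph{reference full set} $J^{*}:=\psi(F[X])\in F[X]$, witnessed by some $I^{*}\in X$ with $F(I^{*})=J^{*}$.

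Then I would assemble the value $\varphi(X)$ canonically from $J^{*}$ and the factor selections. Put $\varphi(X)_{\alpha}:=P_{\alpha}$ for $\alpha\in J^{*}$, put $\varphi(X)_{\alpha}:=\varphi_{\alpha}(\cdots)$ for $\alpha\in\min(A\setminus J^{*})$ (a proper slice produced by the $CLSP$ of $T_{\alpha}$ applied to the appropriate fibre of $X$ over level $\alpha$), and $\varphi(X)_{\alpha}:=\emptyset$ otherwise. Because all predecessors of a boundary level lie in $J^{*}$ and the boundary levels are pairwise incomparable, this data does define an initial segment. Membership $\varphi(X)\in X$ then follows by convexity from a \emph{finite} sandwich: choosing for each boundary level $\alpha_{i}$ a member $L^{(i)}\in X$ that agrees with $I^{*}$ off $\alpha_{i}$ and carries the selected slice there, one checks $\bigcap_{i}L^{(i)}\subseteq\varphi(X)\subseteq\bigcup_{i}L^{(i)}$, and the endpoints lie in $X$ since there are only finitely many $\alpha_{i}$.

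The hard part will be the order-preservation of $X\mapsto\varphi(X)$ for the bi-domination preorder, that is, verifying that $X\leq X'$ in $\mathcal C_{L}(\mathcal I(P))$ forces $\varphi(X)\leq\varphi(X')$. This requires the several ingredients—the index selection $\psi$, the factor selections $\varphi_{\alpha}$, and the fibres over the boundary levels—to be chosen compatibly, and it is delicate precisely because the witnessing members $I^{*}$ and $L^{(i)}$ are not canonical even though $\varphi(X)$ is. I expect the verification to go through the characterisation $A\le B\iff(a\vee b\in B$ and $a\wedge b\in A$ for all $a\in A,b\in B)$, reducing the comparison of $\varphi(X)$ and $\varphi(X')$ first to the comparison of the reference full sets supplied by $\psi$ and then, level by level along the boundary, to the order-preservation of the $\varphi_{\alpha}$. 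The reverse-wqo index is finally handled by duality, exactly as in Case 1, using $\mathcal I(P)\cong\mathcal I(P^{*})^{*}$ together with the fact that $CLSP$ is self-dual.
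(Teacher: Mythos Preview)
Your outline is correct and is essentially the paper's argument. The paper also projects each $I\in T'$ to an initial segment of $A$, applies the $CLSP$ of $\mathcal I(A)$ to the resulting convex sublattice, and then fills in the boundary antichain using the factor selections $s_\alpha$ applied to the slice families $\{I\cap P_\alpha:I\in T'\}$; membership $\varphi(T')\in T'$ is obtained by a finite sandwich exactly as you describe, and monotonicity is checked level by level. The only difference is cosmetic: you project via the \emph{full set} $F(I)=\{\alpha:I_\alpha=P_\alpha\}$ and work with $\min(A\setminus J^{*})$, whereas the paper projects via the \emph{support} $p[I]=\{\alpha:I_\alpha\neq\emptyset\}$ and works with $\max A'$. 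These two initial segments of $A$ differ only along the boundary antichain, and either choice makes the construction go through. One small point: in your sandwich you should include the witness $I^{*}$ alongside the $L^{(i)}$ (i.e., use $I^{*}\cap\bigcap_i L^{(i)}$ and $I^{*}\cup\bigcup_i L^{(i)}$), both to cover the case $\min(A\setminus J^{*})=\emptyset$ and to guarantee the correct slice at levels in $J^{*}$.
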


\begin{proof}
According to Case 1 above, $T := \mathcal{I}(A)$ has $CLSP$.  Let
$s: \mathcal{C}_{ L}(T)\rightarrow T$ and $s_{\alpha}:\mathcal{C}_{
L}(T_{\alpha})\rightarrow T_{\alpha}$, $\alpha \in A$, be selection
maps.  Let $P:= \sum_{\alpha \in A}P_{\alpha}$ and $p: P\rightarrow A$
be the projection map.

Let $T'$ be a nonempty convex sublattice of $\mathcal {I}(P)$ and let
$\theta (T'):= \{p[I]: I\in T'\}$.  Since $\theta (T')$ is convex,
down-directed and up-directed, it is a nonempty convex sublattice of
$T$ and $A':=s(\theta (T'))$ is well defined.  Then $A' \in
\theta(T')$ so there is some $I' \in T'$ such that $p[I'] = A'$.

Let $\max A'$ be the set of maximal elements of $A'$ (perhaps the
empty set).  For each $\alpha \in \max A'$, let $T'_{\alpha}:= \{I\cap
P_{\alpha} : I \in T' \}$.  Routine checking verifies that
$T'_{\alpha}$ is a convex sublattice of $T_{\alpha}$.  Let
$s_{\alpha}(T'_{\alpha}) = P'_{\alpha}$ and define $\varphi$ on
$\mathcal{C}_{ L}(\mathcal{I}(P))$ as follows: $$\varphi (T')= \bigcup
\{P_{\alpha}: \alpha \in A' \setminus \max A' \} \cup
\bigcup\{P'_{\alpha}: \alpha \in \max A' \} .$$

\begin{claim} 
The map $\varphi :\mathcal{C}_{ L} (\mathcal{I}(P))\rightarrow
\mathcal{I}(P)$ is an order preserving selection map.
\end{claim} 

\noindent {\bf Proof of the Claim.} 
First let us see that $\varphi(T')\in T'$.  It is obvious that
$\varphi(T') \in \mathcal{I}(P)$.  If $\max A' = \emptyset$ then the
conclusion is immediate since $\bigcup \{P_{\alpha}: \alpha \in A'\}$
is the only element $I \in T'$ with $p[I] = A'$.

Let us assume that $\max A' \ne \emptyset$.  Then $\bigcup
\{P_{\alpha}: \alpha \in A' \setminus \max A' \}\subseteq I$ for each
$I\in T'$ with $p[I] = A'$.  For each $\alpha \in \max A'$, select
$I_{\alpha} \in T'$ such that $I_{\alpha}\cap P_{\alpha}=P'_{\alpha}$.
Since $A$ has no infinite antichain, $\max A'$ is finite and, hence,
these two initial segments belong to $T'$: $$ \bigcap ( I_{\alpha} :
\alpha \in \max A' ) \cap I'  \quad \text{and} \quad \bigcup ( I_{\alpha}:
\alpha \in \max A' ) \cup I' .$$

It is easy to see that $\varphi(T')$
is between these two sets in $\mathcal{I}(P)$ and so belongs to the
convex sublattice $T'$.

Now, let $T', T''\in \mathcal{C}_{ L}(\mathcal{I}(P))$ with $T'\leq
T''$. We check that $\varphi(T')\subseteq \varphi(T'')$.  Let $x\in
\varphi(T')$.  Then $x\in P_{\alpha}$ for some $\alpha \in A$. If
$\alpha$ is not maximal in $A'$, defined as above, then since we have
$A'\subseteq A''$, $\alpha$ is not maximal in $A''$, thus $x\in
\varphi(T'')$. If $\alpha$ is maximal in $A'$ and not maximal in $A''$
then the same conclusion holds.  The only remaining case is $\alpha$
is maximal in $A'$ and $A''$.  In this case, $$x \in P'_{\alpha} =
s_{\alpha}(T'_{\alpha}) \subseteq s_{\alpha}(T''_{\alpha}) =
P''_{\alpha} \subseteq \varphi(T'').$$ This depends on the fact that
$T'_{\alpha}\leq T''_{\alpha}$, a consequence of $T'\leq T''$.
\endproof \\

With that, the proof of the lemma is complete.\end{proof} \\

\noindent
{\bf Case 3}.  If $P$ is an augmentation of a poset $Q$ then $\mathcal {I}(P)$ is embeddable in 
$\mathcal {I}(Q)$; if $\mathcal {I}(Q)$ has the $CLSP$ then  Lemma \ref{cor:retract} asserts that 
$\mathcal {I}(P)$ has  the $CLSP$.\\

\section{Conclusion and further developments}\label{conclusion}

The following table summarizes various preservation  properties under
operations relevant to this investigation, and mentions one remaining
open question.

\begin{center}
\tiny
\begin{tabular}{|c|c|c|c|}\hline
  & \multicolumn{3}{c}{\bf Property of a lattice T} \\ \hline
{\bf Preservation under}  &  CLSP &  CLFPP &  $\mathcal C_{L}(T)$ is complete  \\ \hline
  Retracts & yes: Lemma \ref{lem:CLSPretraction} & yes: Lemma \ref{lem:CLFPPretraction}  & yes: Corollary \ref{cor:CL_{*}(T)retract+quotient}\\ \hline
  Quotient & yes: Corollary \ref{cor:CLSPquotient} & yes: Lemma \ref{lem:CLFPPretraction} & yes: Corollary \ref{cor:CL_{*}(T)retract+quotient}  \\ \hline
  Finite product & yes: Lemma \ref{lem:CLSPproduct} & Open    &    yes: Corollary \ref{cor:finiteproduct} \\  \hline

  \hline
\end{tabular}
\end{center}

\medskip

To be specific we pose the following:

\begin{problem} 
Is CLFPP preserved under finite products? 
\end{problem}

In Lemma \ref{lem:quotient-retract}, we showed that every quotient of
a lattice with $CLSP$ is a retract of that lattice. We do not know if
the corresponding  statement is true for CLFPP and $\mathcal C_{L}(T)$ being
complete. We therefore ask:

\begin{problem} 
\begin{enumerate}
\item Is every quotient of a lattice with $CLFPP$  a retract of that lattice?
\item Is every quotient of a lattice $T$ such that $\mathcal C_{L}(T)$ is complete  a retract of that lattice?
\end{enumerate}
\end{problem}

\bigskip

In this paper we have considered multivalued maps defined on a poset
$P$ whose values belong to a particular subset $\mathcal D$ of
$\powerset (P)$, this set being either $\mathcal C (P)$, or $\mathcal
C_L(T)$ provided that $P$ is a lattice $T$. In these two cases, we
have tried to relate the fixed point property for those maps to the
order structure of $\mathcal D$. There are other sets $\mathcal D$ to
consider, notably the set of \emph{bounded} sets, sets of the form
$S(\mathcal A, \mathcal B)$ where $(\mathcal A, \mathcal B)$ is a
separable pregap of $P$. One could rather consider other structures
than posets. Metric spaces seems to be appropriate. If we look at a
metric space $(E,d)$ as an object similar to a poset, the Hausdorff
distance on $\powerset (E)$ is the analog of the preorder we
defined on the power set of a poset, and the \emph{non-expansive} maps
$f$ from $E$ to $ \powerset(E)$ (satisfying $d(f(x),f(y))\leq d(x,y)$
for all $x,y\in E$) the analog of the order preserving maps. In the
theory of metric spaces, the spaces analogous to complete lattices seems to
be the \emph{hyperconvex metric spaces} introduced by N.Aronszajn and
Panitchpakdi in 1956, e.g. bounded hyperconvex metric spaces have FPP
(R.Sine, P.M.Soardi, 1979). Also, analogs of convex sublattices seem
to be the up directed unions of intersections of balls (for
hyperconvex spaces see \cite{kirk} and for the analogies between
posets and metric spaces see \cite{jawhari}).

\medskip 
We conclude with a specific problem in this direction. 

\begin{problem} 
For which metric spaces does  every multivalued map into the set of up
directed unions of intersections of balls have a fixed point?
\end{problem}


\end{document}